\documentclass[11pt]{article}

\usepackage[utf8]{inputenc}
\usepackage[dvipsnames]{xcolor}
\usepackage{url}            
\usepackage{booktabs}       
\usepackage{amsfonts}       
\usepackage{nicefrac}       
\usepackage{mathrsfs}
\usepackage{graphicx}
\usepackage{amsmath,amssymb,amsthm}
\usepackage[sans]{dsfont}
\usepackage[square,numbers]{natbib}

\usepackage{algorithm,algorithmic}
\usepackage{thmtools}
\usepackage{thm-restate}

\usepackage[pdfencoding=auto, psdextra, colorlinks,citecolor=blue!90!black]{hyperref} 
\usepackage{cleveref}

\declaretheorem[name=Theorem,numberwithin=section]{theorem}

\declaretheorem[name=Lemma,sibling=theorem]{lemma}
\declaretheorem[name=Proposition,sibling=theorem]{proposition}

\declaretheorem[name=Corollary,sibling=theorem]{corollary}
\declaretheorem[name=Example,sibling=theorem]{example}
\declaretheorem[name=Remark,sibling=theorem]{remark}
\declaretheorem[name=Definition,sibling=theorem]{definition}

\declaretheorem[name=Assumption,sibling=theorem]{assumption}

\usepackage{bm}

\def\sgn{\textnormal{sgn}}

\DeclareMathOperator*{\argmin}{argmin}
\DeclareMathOperator{\cl}{cl}

\DeclareMathOperator{\con}{con}

\DeclareMathOperator{\dist}{dist}

\DeclareMathOperator{\exs}{exs}
\DeclareMathOperator{\dom}{dom}
\DeclareMathOperator{\gph}{gph}

\DeclareMathOperator{\nt}{int}

\DeclareMathOperator{\lip}{lip}
\DeclareMathOperator{\glip}{g-lip}

\newcommand{\reals}{\mathbb{R}}
\newcommand{\Reals}{\overline{\mathbb{R}}}
\newcommand{\natnums}{{{\rm l} \kern -.13em {\rm N} }}
\newcommand{\nats}{\mathbb{N}}
\newcommand{\Ex}{\mathbb{E}}
\newcommand{\setd}{{ d \kern -.15em l}}

\newcommand{\nsum}{\mathop{\sum}\nolimits}

\renewcommand{\liminf}{\mathop{\rm liminf}}
\renewcommand{\limsup}{\mathop{\rm limsup}}

\newcommand{\ninf}{\mathop{\rm inf}\nolimits}
\newcommand{\nsup}{\mathop{\rm sup}\nolimits}

\newcommand{\nnmin}{\mathop{\rm minimize}}

\newcommand{\nargmin}{\mathop{\rm argmin}\nolimits}

\newcommand{\lto}{\,{\lower 1pt\hbox{$\rightarrow$}}\kern -10pt
     \hbox{\raise 4pt \hbox{$\, \scriptstyle l$}}\hskip7pt}
\newcommand{\eto}{\,{\lower 1pt\hbox{$\rightarrow$}}\kern -10pt
     \hbox{\raise 4pt \hbox{$\, \scriptstyle e$}}\hskip7pt}
\newcommand{\hto}{\,{\lower 1pt\hbox{$\rightarrow$}}\kern -11pt
     \hbox{\raise 4pt \hbox{$\, \scriptstyle h$}}\hskip7pt}
\newcommand{\pto}{\,{\lower 1pt\hbox{$\rightarrow$}}\kern -11pt
     \hbox{\raise 4.5pt \hbox{$\, \scriptstyle p$}}\hskip7pt}
\newcommand{\cto}{\,{\lower 1pt\hbox{$\rightarrow$}}\kern -11pt
     \hbox{\raise 4pt \hbox{$\, \scriptstyle c$}}\hskip7pt}
\newcommand{\gto}{\,{\lower 1pt\hbox{$\rightarrow$}}\kern -11pt
     \hbox{\raise 4.5pt \hbox{$\, \scriptstyle g$}}\hskip7pt}
\newcommand{\sto}{\,{\lower 1pt\hbox{$\rightarrow$}}\kern -10pt
     \hbox{\raise 4pt \hbox{$\, \scriptstyle s$}}\hskip7pt}
     
     \def\Nto{\,{\raise 1pt\hbox{$\rightarrow$}}\kern -13pt
     \hbox{\lower 3pt \hbox{$\, \scriptstyle N$}}\hskip7pt}
\def\Cto{\,{\raise 1pt\hbox{$\rightarrow$}}\kern -14pt
     \hbox{\lower 3pt \hbox{$\, \scriptstyle C$}}\hskip7pt}
\def\fto{\,{\raise 1pt\hbox{$\rightarrow$}}\kern -14pt
     \hbox{\lower 3pt \hbox{$\, \scriptstyle f$}}\hskip7pt}

\def\dd{\textnormal{d}}

\title{Lipschitzian SLLNs for random functions}

\author{Lai~Tian\thanks{Daniel J.~Epstein Department of Industrial and Systems Engineering, University of Southern California, Los Angeles, CA. Emails: {\tt \{laitian, royset\}@usc.edu}} \and Johannes O.~Royset\footnotemark[1]}

\date{July 22, 2026}

\usepackage{fullpage}

\usepackage{multirow}

\def\epsilon{\varepsilon}

\newcommand{\ball}{\mathbb{B}}
\newcommand{\compl}{\mathsf{c}}

\usepackage{bbm}
\usepackage{dsfont}

\usepackage{framed}

\def\Xto{\,{\raise 1pt\hbox{$\rightarrow$}}\kern -12pt
     \hbox{\lower 3pt \hbox{$\, \scriptscriptstyle X$}}\hskip7pt}

\def\eval{\textnormal{ev}}

\def\R-alg{\mathcal{R}}
\def\Lip{\operatorname{Lip}}

\usepackage{enumitem}

\def\VC{\textnormal{\textsf{VC}}}

\begin{document}
\maketitle
\begin{abstract}
We prove strong laws of large numbers for locally Lipschitz functions in the Lipschitz pseudometric. Our results hold under either a topological or a model-theoretic condition, with the latter encompassing functions jointly definable in o-minimal structures but extending substantially beyond this class. Applications include uniform convergence of limiting and Clarke subdifferentials and finite-sample identification of solutions. Consequently, we identify broad classes of functions for which the failure phenomena revealed by our previous negative results [Tian and Royset, arXiv:2511.16568, 2025] do not occur.
\end{abstract}

\section{Introduction}
In stochastic optimization and statistical learning, empirical approximations are expected to preserve the landscape of the population problem.
 This principle is well understood for global solutions and, in nonconvex smooth settings, for stationary points. Our goal is to develop an analogous consistency theory for stationary points of nonconvex nonsmooth problems.

Let $(\Xi,\mathcal{A})$ be a measurable space and let $\bm{\xi},\bm{\xi}^1,\bm{\xi}^2,\ldots$ be independent and identically distributed (iid) $\Xi$-valued random variables on a complete probability space $(\Omega, \mathcal{F}, \mathbb{P})$. Throughout, we use boldface notation for random variables. For each $\nu\in\nats=\{1, 2, \dots \}$, consider the problems
\begin{align}
\nnmin\nolimits_{x\in X} Ef(x)  &= \Ex[f(\bm{\xi},x)],
\label{eq:primal}\\
\nnmin\nolimits_{x\in X} E^\nu f(x) &= \tfrac{1}{\nu}\nsum_{i=1}^\nu
                       f(\bm{\xi}^i,x).
\label{eq:saa}
\end{align}
Here, $f:\Xi\times\reals^d\to\reals$ is $\mathcal{A}$-measurable in $\xi$ for every $x\in\reals^d$, and each slice $f(\xi,\cdot)$ is a possibly nonconvex and nonsmooth Lipschitz function on $X\subset\reals^d$.
We refer to \eqref{eq:primal} as the \emph{population} problem and to \eqref{eq:saa} as its \emph{sample-average approximation} (SAA) problem.
Epigraphical and uniform strong laws of large numbers (SLLNs) establish consistency of global solutions for the SAA problem; see, e.g., \cite{artstein1995consistency} and \cite[Section~9.2.6]{shapiro2021lectures}. For stationary points, uniform laws for gradients yield an analogous consistency theory in the nonconvex smooth setting; see, e.g., \cite[p.~450]{shapiro2021lectures} and \cite{mei2018landscape,foster2018uniform}. When the function $f(\xi,\cdot)$ may be nonconvex and nonsmooth, however, even the appropriate formulation of consistency concepts for stationary points requires care.

For ease of exposition, assume temporarily that $X=\reals^d$ and consider stationarity defined through the Clarke subdifferential \cite[Equation~8(32)]{VaAn}. Whenever well-defined, a point $x \in \reals^d$ is Clarke stationary for the population problem if $0 \in \overline{\partial} E f(x)$; the analogous definition applies to the SAA problem. The desired conclusion is that, if, $\mathbb{P}$-almost surely ($\mathbb{P}$-a.s.), $\bm{x}^\nu\to \bm{x}$ and $\dist(0,\overline{\partial}E^\nu f(\bm{x}^\nu))\to0$, then $0\in\overline{\partial}Ef(\bm{x})$, $\mathbb{P}$-a.s. However, most existing consistency results do not handle the two set-valued mappings $\overline{\partial}E^\nu f$ and $\overline{\partial}E f$; see, e.g., \cite{shapiro2007uniform,xu2010uniform,norkin-wets,burke2020subdifferential}. Instead, these works interchange expectation and subdifferentiation and study $\frac{1}{\nu}\sum_{i=1}^\nu\overline{\partial}_x f(\bm{\xi}^i,\cdot)$ and $\Ex[\overline{\partial}_x f(\bm{\xi},\cdot)]$, where $\overline{\partial}_x f(\xi,\cdot)=\overline{\partial}(f(\xi,\cdot))$ denotes the partial Clarke subdifferential and the expectation is understood in the Aumann sense \cite{aumann1965integrals}. The solution concept defined by the zero sets of $\frac{1}{\nu}\sum_{i=1}^\nu\overline{\partial}_x f(\bm{\xi}^i,\cdot)$ and $\Ex[\overline{\partial}_x f(\bm{\xi},\cdot)]$ is commonly referred to as \emph{weak stationarity}; see, e.g., \cite{xu2010uniform,burke2020subdifferential}. Consistency of weak stationarity typically holds under mild assumptions \cite{shapiro2007uniform, norkin-wets,xu2010uniform}. By standard interchange rules \cite[Theorem 2.7.2]{clarke1990optimization}, one has $\overline{\partial}E^\nu f(x)\subset\frac{1}{\nu}\sum_{i=1}^\nu\overline{\partial}_x f(\bm{\xi}^i,x)$ and $\overline{\partial}Ef(x)\subset\Ex[\overline{\partial}_x f(\bm{\xi},x)]$, while the reverse inclusions generally fail unless suitable subdifferential regularity conditions hold. Thus, \emph{stationarity}, defined through the subdifferential of the expectation function, is a sharper solution concept than weak stationarity.

Indeed, this distinction between weak stationarity and stationarity can be decisive. For illustration, let $\phi:\reals\to\reals$ be a $1$-Lipschitz function satisfying $\overline{\partial}\phi(x)=[-1,1]$ for every $x\in\reals$; see, e.g.,  \cite[p.~129]{rockafellar1982favorable}. Let $\Xi=\{-1,1\}$ with equal probabilities, and define $f(\xi,x)=\tfrac12x+\xi\phi(x)$. Then $\overline{\partial}Ef(x)=\{\tfrac12\}$ and $\overline{\partial}_x f(\xi,x)=[-\tfrac12,\tfrac32]$ for every $\xi \in \Xi$ and $x\in\reals$. Thus, any $x\in\reals$ is weakly stationary for both the population and SAA problems, because $0\in\Ex[\overline{\partial}_x f(\bm{\xi},x)]=\frac{1}{\nu}\sum_{i=1}^\nu\overline{\partial}_x f(\bm{\xi}^i,x)$ for any $\nu \in \nats$. But the population objective  has no stationary point. Consequently, consistency of weak stationarity may hold under mild assumptions yet remain vacuous. By contrast, consistency of stationarity is a stronger concept but can fail under standard assumptions; see \cite[Section~2.3]{tian2025failure}.

The desired conclusion can be sharpened further by using finer notions of subdifferentiation. For a locally Lipschitz function $h:\reals^d\to\reals$, the Clarke subdifferential satisfies $\overline{\partial}h(x)=\con\partial h(x)$, where $\partial h$ denotes the limiting subdifferential \cite[Definition~8.3]{VaAn}. This convexification may produce Clarke stationary points that are not limiting stationary. For example, $\partial(-|\cdot|)(0)=\{-1,1\}$, while $\overline{\partial}(-|\cdot|)(0)=[-1,1]$. Thus, the strict local maximizer $0$ is Clarke stationary but not limiting stationary. In general, limiting stationarity is a sharper solution concept than Clarke stationarity. They are equivalent when $h$ is a subdifferentially regular function \cite[Definition~7.25]{VaAn}.

In this paper, we establish SAA consistency results for nonconvex nonsmooth problems that are sharper in two respects: We compare the subdifferentials of the population and SAA objectives without interchanging expectation and subdifferentiation, and we treat limiting subdifferentials rather than merely their Clarke convexifications. We achieve this by establishing a new uniform convergence result for limiting subdifferentials. Existing results concern enlarged Clarke subdifferentials \cite{shapiro2007uniform}, or restricting to subdifferentially regular functions  for which the limiting and Clarke subdifferentials coincide; see \cite{shapiro2007uniform,davis2022graphical,ruan2024subgradient}. To the best of our knowledge, no previous uniform law compares $\partial E^\nu f$ and $\partial Ef$ for an irregular class of locally Lipschitz random functions. Consequently, no comparable consistency theory has been available for limiting stationary points.

Our approach is to compare the population and SAA functions before passing to subdifferentials. We establish the following \emph{Lipschitzian SLLN}: 
\begin{equation}\label{eq:lip-slln}
d_X(E^\nu f,Ef)\to0,
\qquad
\mathbb{P}\text{-a.s.},
\end{equation}
where $X \subset \reals^d$ and $d_X(\cdot, \cdot)$ denotes the Lipschitz pseudometric; see \Cref{sec:prel-lip} for the definition. Informally, $d_X(\cdot, \cdot)$ controls both the function values and the global Lipschitz modulus of the function $E^\nu f - Ef$ on the set $X$. This makes it a natural mode of convergence for our purposes. Indeed, if $h$ and $g$ are locally Lipschitz on an open set $X \subset \reals^d$, then
\begin{equation}\label{eq:intro-subd-bound}
\nsup_{x\in X}
\setd\bigl(\overline{\partial}h(x),\overline{\partial}g(x)\bigr)
\leq
\nsup_{x\in X}
\setd\bigl(\partial h(x),\partial g(x)\bigr)
\leq
d_X(h,g),
\end{equation}
where $\setd(A,B)$ denotes the Hausdorff distance between nonempty sets $A$ and $B$; see \Cref{prop:subd}.
Thus, we obtain simultaneous uniform control of limiting and Clarke subdifferentials  and, consequently, the desired consistency of stationary points. \Cref{sec:appl} discusses further applications and properties of the Lipschitzian SLLN. Notably, the work \cite{tian2025failure} shows that the uniform law corresponding to \eqref{eq:intro-subd-bound}, with $h=E^\nu f$ and $g = Ef$, cannot hold in general even when every slice $f(\xi, \cdot)$ is convex and satisfies the standard integrability and Lipschitz assumptions. Hence, additional assumptions are necessary for the Lipschitzian law in \eqref{eq:lip-slln} to hold.

We prove \eqref{eq:lip-slln} under two distinct structural conditions, together with mild integrability and Lipschitz assumptions. The first is a topological separability condition; see \Cref{as:separable}. It holds, in particular, when either the distribution is discrete or the slices are continuously differentiable on a compact set; see \Cref{sec:sep}. The second is a model-theoretic definability condition; see \Cref{as:definable}. It encompasses many nonsmooth functions arising in practice, including losses from modern learning models \cite{bareilles2025deep}; see \Cref{sec:def}.
For illustration, under mild integrability and Lipschitz assumptions, the Lipschitzian law \eqref{eq:lip-slln} holds if either of the following conditions is satisfied (cf.~the more general results in \Cref{thm:countable,thm:lln,thm:o-minimal}):
\begin{enumerate}[label=\textnormal{(\roman*)}]
\item The family $\{f(\xi,\cdot)|_X\mid\xi\in\Xi\}$ is countable.
\item For every $\xi\in\Xi$, the slice $f(\xi,\cdot)$ is definable in the real exponential field $\mathcal{R}_{\rm exp}$.
\end{enumerate}
The first condition arises naturally for discrete distributions, while the second is satisfied by the training losses of many modern deep learning models; see \Cref{sec:pre-model} for definitions and \Cref{sec:verify} for details.
Despite their fundamentally different nature, both conditions ensure the Lipschitzian law and thereby rule out the failure phenomenon exhibited in \cite{tian2025failure}.

In addition to establishing the Lipschitzian law \eqref{eq:lip-slln}, we study its convergence rate, obtaining the canonical rate $O_{\mathbb{P}}(\nu^{-1/2})$ in one setting and rates that can be arbitrarily slow in others. Beyond uniform convergence of subdifferentials, we also apply \eqref{eq:lip-slln} to identify solutions to the population problem using only a finite number of samples, extending existing results \cite{shapiro2000rate,kleywegt2002sample,shapiro2002conditioning} by relaxing convexity and distributional assumptions.

The remainder of the paper is organized as follows. \Cref{sec:prel} reviews Lipschitz spaces, VC classes, and the model-theoretic notions used throughout. \Cref{sec:lslln} introduces the two structural assumptions and establishes the Lipschitzian SLLNs and their convergence rates. \Cref{sec:verify} provides practical conditions ensuring separability and definability. \Cref{sec:appl} discusses applications and related work, including uniform SLLNs for subdifferentials and finite-sample identification of solutions.

\paragraph{Notation.}
We use mostly standard notation. The sets of positive integers, integers, and rational numbers are denoted by $\nats=\{1,2,\ldots\}$, $\mathbb{Z}$, and $\mathbb{Q}$, respectively. For $\nu\in\nats$, write $[\nu]=\{1,\ldots,\nu\}$, and let $\Reals=\reals\cup \{-\infty,\infty\}$. For a set $S$, we denote its cardinality by $|S|$ and its power set by $2^S$. For disjoint sets $A$ and $B$, we write the union $A\cup B$ as $A\sqcup B$.
Let $\|\cdot\|$ denote a norm on a Euclidean space, $\|\cdot\|_*$ its dual norm, $\|\cdot\|_2$ the Euclidean norm, and $\ball=\{y\mid \|y\|\leq 1\}$. A function $g:\reals^d\to\reals$ is $L$-Lipschitz on $X\subset\reals^d$ if $|g(x)-g(y)|\leq L\|x-y\|$ for all $x,y\in X$, and simply $L$-Lipschitz when $X=\reals^d$.  
We denote the restriction of $g$ to $X$ by $g|_X$ and the graph of $g$ by $\gph g$.
For nonempty sets $C,D\subset\reals^d$ and $x\in\reals^d$, define $\dist(x,C)=\inf_{z\in C}\|x-z\|$. The excess of $C$ relative to $D$ is $\exs(C;D)=\sup_{z\in C}\dist(z,D)$, and the Hausdorff distance between $C$ and $D$ is $\setd(C,D)=\max\{\exs(C;D),\exs(D;C)\}$. The convex hull and real linear span of a set $S$ are denoted by $\con S$ and $\operatorname{span}S$, respectively.  
For a set $C$, define $\mathbf{1}_C(x)=1,\iota_C(x)=0$ if $x\in C$, and $\mathbf{1}_C(x)=0,\iota_C(x)=\infty$ otherwise. 
 For $t\in\reals$, define $\sgn(t)=t/|t|$ if $t\neq0$ and $\sgn(t)=0$ otherwise.
The Borel $\sigma$-algebra on a topological set $S$ is denoted by $\mathcal{B}(S)$. 
A sequence of random variables $\{\mathbf{y}^\nu\}_{\nu\in\nats} = O_{\mathbb{P}}(r^\nu)$ if $\{\mathbf{y}^\nu/r^\nu\}_{\nu\in\nats}$ is bounded in probability. 

\section{Preliminaries}\label{sec:prel}
In this section, we set the stage and recall concepts and results used in the sequel. Let $(\Xi,\mathcal{A})$ be a measurable space, and let $\bm{\xi},\bm{\xi}^1,\bm{\xi}^2,\ldots$ be independent and identically distributed (iid) $\Xi$-valued random variables.  For an $\mathcal{A}$-measurable function $h:\Xi \to \reals$, when the underlying random variable is clear from the context, we write its expectation $\Ex[h(\bm{\xi})]$ as $\Ex h$ and, by a slight abuse of notation, denote the sample average $\frac{1}{\nu}\sum_{i=1}^\nu h(\bm{\xi}^i)$ by $\Ex^\nu[h(\bm{\xi})]$ or $\Ex^\nu h$.

For a function $f:\Xi\times\reals^d\to\reals$ such that $f(\cdot,x)$ is $\mathcal{A}$-measurable for every $x\in\reals^d$, its expectation function $Ef:\reals^d\to\Reals$ and its sample-average approximation $E^\nu f:\reals^d\to\reals$ are given by 
\[
Ef(x)=\Ex[f(\bm{\xi},x)],\qquad
E^\nu f(x)=\tfrac{1}{\nu}\nsum_{i=1}^\nu f(\bm{\xi}^i,x),
\]
adopting the convention $\infty- \infty = \infty$ if needed. As $\nu \to \infty$, we are interested in the convergence of $E^\nu f$ to $Ef$ in suitable topologies and in the consequences of such convergence. Classical examples include epigraphical convergence \cite[Section 7B]{VaAn} and uniform convergence \cite[Section 7C]{VaAn}. Results of this kind are referred to in the literature as epigraphical SLLNs \cite{artstein1995consistency} and uniform SLLNs \cite[Section 9.2.6]{shapiro2021lectures}.

In this paper, we focus on a stronger mode of convergence, defined through a Lipschitz space and Lipschitz metric recalled below.

\subsection{Lipschitz space, norm, and pseudometric}\label{sec:prel-lip}
For a real-valued function $h:Y \to \reals$ and nonempty sets $X\subset Y \subset \reals^d$, if $X$ is not singleton, the \emph{global Lipschitz modulus} $\glip_X h \in \Reals$ of the function $h$ on the set $X$ can be written as
\[
\glip_X h=\sup_{x,y \in X,x \neq y} \frac{h(x) - h(y)}{\|x - y\|}.
\]
For singleton $X$, we define by convention $\glip_X h=0$. We may write $\glip_{X} h$ as  $\glip h$ when $X=\dom h$. 
The \emph{local Lipschitz modulus} of $h$ at $x$ is defined by \cite[Definition 9.1]{VaAn}
\[
\lip h(x)
=
\limsup_{y,z\to x,\ y\neq z}
\frac{h(y)-h(z)}{\|y-z\|}.
\]
The global Lipschitz modulus controls the local Lipschitz modulus at every interior point of $X$, as stated below. The proof is elementary and omitted.
\begin{lemma}\label{lem:loc-glob-lip}
	If $X\subset\reals^d$ and $h:\reals^d\to\reals$, then for every $x\in\nt X$, $\lip h(x) \leq \glip_X h$.
\end{lemma}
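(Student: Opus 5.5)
The plan is to use the fact that every interior point has a whole neighborhood inside $X$, so the limsup defining $\lip h(x)$ only ever sees difference quotients that are already bounded by $\glip_X h$.

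First, fix $x\in\nt X$. Choose $\delta>0$ such that the open ball $x+\delta\,\nt\ball\subset X$. Because the ball is open and nonempty in $\reals^d$, the set $X$ is not a singleton whenever $d\ge 1$. Hence $\glip_X h$ is given by the supremum formula, not by the singleton convention. The degenerate case $d=0$ can be dismissed by convention: there the limsup is over an empty set, so $\lip h(x)=-\infty$ and the inequality is trivial.

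Second, take any $y,z$ with $y\neq z$ and $\|y-x\|<\delta$, $\|z-x\|<\delta$. Then $y,z\in X$, so by definition of the global modulus
\[
\frac{h(y)-h(z)}{\|y-z\|}\le \glip_X h .
\]
The limsup defining $\lip h(x)$ equals
\[
\lim_{\epsilon\downarrow0}\;\sup\Bigl\{\tfrac{h(y)-h(z)}{\|y-z\|} \;\Bigm|\; y,z\in x+\epsilon\ball,\ y\neq z\Bigr\}.
\]
For every $\epsilon<\delta$ the inner supremum is taken over a subset of the pairs in $X$, so it is at most $\glip_X h$. Passing to the limit $\epsilon\downarrow 0$ gives $\lip h(x)\le\glip_X h$. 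This holds in $\Reals$, including the case $\glip_X h=\infty$, where there is nothing to prove.

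There is no real obstacle here. The only points needing care are the singleton convention and writing the limsup explicitly as a decreasing limit of suprema over shrinking balls, so that the monotonicity step is transparent.
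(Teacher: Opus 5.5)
Your proof is correct. Taking $\epsilon<\delta$ keeps every pair $y,z\in x+\epsilon\ball$ inside $X$, so each inner supremum is bounded by $\glip_X h$; and observing that an interior point forces $X$ to be non-singleton correctly sidesteps the convention $\glip_X h=0$. The paper omits this proof as elementary, and your argument is the routine one it has in mind.
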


When $0 \in X$, consider the \emph{space of Lipschitz functions} $\Lip(X)$ on $X$ defined as 
\[
    \Lip(X)
    =
    \{h:X\to\reals \mid \glip h < \infty\},
\]
which is endowed with the following Lipschitz norm (see \cite{beer2013lipschitz} and \cite[Section 4]{beer2023bornologies}):
\[
\|h\|_{\lip} = \max\{|h(0)|, \glip h\}.
\]
The space $(\Lip(X), \|\cdot \|_{\lip})$ is a Banach space \cite[Proposition 4.1]{beer2023bornologies}. The point $0 \in X$ in the definition above can be replaced by any point in $X$ without changing the topology \cite[p.~27]{beer2023bornologies}. We make such a choice for notational simplicity. For $0 \in X \subset \reals^d$ and functions $h,g:\reals^d\to\Reals$ that are real-valued on $X$, 
the Lipschitz norm $\|\cdot\|_{\lip}$ on $\Lip(X)$ also gives rise to a Lipschitz (extended) pseudometric between $h$ and $g$ as follows
\[
d_{X}(h, g) = \|(h - g)|_X\|_{\lip} = \max\{|h(0) - g(0)|, \glip{}(h|_X-g|_X)\}.
\] 
When $(h-g)|_X \in \Lip(X)$, one has $d_X(h,g) < \infty$.
\subsection{VC-class of sets and functions}

Let $\mathcal{C}$ be a collection of subsets of a set $S$ and $\{x_1, \ldots, x_n\}$ be an arbitrary finite subset of $S$.  We say the  collection $\mathcal{C}$ \emph{picks out} a subset $A$ from  $S$ if there exists $C \in \mathcal{C}$ such that $A = C \cap \{x_1, \ldots, x_n\}$. The collection $\mathcal{C}$ is said to \emph{shatter} $\{x_1, \ldots, x_n\}$ if each of its $2^n$ subsets can be picked out. The \emph{Vapnik--Chervonenkis dimension} (VC-dimension) $\VC(\mathcal{C})$ of the class $\mathcal{C}$ is the largest $n \in \nats$ such that some $\{x_1, \ldots, x_n\}\subset S$ can be shattered by $\mathcal{C}$; see \cite[Section 2.6.1]{vanderVaartWellner.23}. Formally, one has
\[
\VC(\mathcal{C})= \sup \Big\{ n\in \nats \Bigm| 2^n = \max_{x_1, \ldots, x_n \in S} \big|\{C\cap \{x_1, \ldots, x_n\} \mid C \in \mathcal{C}\}\big|\Big\},
\]
which can be infinity if $\mathcal{C}$ shatters sets of arbitrarily large size and we write $\VC(\mathcal{C})=\infty$.
If $\VC(\mathcal{C}) < \infty$, we call the collection $\mathcal{C}$ a VC-class.  

VC-classes are stable under certain operations; see \cite[Section~2.6.6]{vanderVaartWellner.23}. The following properties will be useful for us.

\begin{lemma}[permanence]\label{lem:vc-rules}
	Given $G:X \to Y$ and VC-classes $\mathcal{C},\mathcal{D} \subset 2^Y$, the following hold. 
	\begin{enumerate}[label=\textnormal{(\alph*)}]
		\item The family $\{C \cup D \mid C \in \mathcal{C}, D \in \mathcal{D}\}$ is a VC-class; see \cite[Lemma 2.6.19(iii)]{vanderVaartWellner.23}.
		\item The family $\mathcal{C}\cup \mathcal{D}$ is a VC-class; see \cite[Exercise 6.11]{shalev2014understanding}.
		\item The family $G^{-1}(\mathcal{C})=\{G^{-1}(C) \mid C \in \mathcal{C}\}$ is  a VC-class; see \cite[Lemma 2.6.19(v)]{vanderVaartWellner.23}.
	\end{enumerate}
\end{lemma}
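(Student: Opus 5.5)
The three parts of the permanence lemma are standard facts about VC-classes, cited from \cite{vanderVaartWellner.23} and \cite{shalev2014understanding}. Still, each has a short self-contained argument, and I would organize the proof around shatter coefficients. For a collection $\mathcal{C}$ and a finite set $F=\{x_1,\ldots,x_n\}$, write $\Delta_{\mathcal{C}}(F)=|\{C\cap F\mid C\in\mathcal{C}\}|$ and $\Pi_{\mathcal{C}}(n)=\max_{|F|= n}\Delta_{\mathcal{C}}(F)$. Then $\VC(\mathcal{C})<\infty$ if and only if $\Pi_{\mathcal{C}}(n)<2^n$ for some $n$. The key tool is the Sauer--Shelah lemma: if $\VC(\mathcal{C})=v<\infty$, then $\Pi_{\mathcal{C}}(n)\le\sum_{k=0}^{v}\binom{n}{k}\le (n+1)^{v}$, which is polynomial in $n$.

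\emph{Part (c).} I would prove this directly. Suppose $G^{-1}(\mathcal{C})$ shatters $\{x_1,\ldots,x_n\}\subset X$. If $G(x_i)=G(x_j)$ for some $i\ne j$, then every preimage $G^{-1}(C)$ contains both points or neither, so the subset $\{x_i\}$ cannot be picked out; this contradicts shattering when $n\ge2$. Hence the images $y_i=G(x_i)$ are distinct. Since $G^{-1}(C)\cap\{x_1,\ldots,x_n\}$ corresponds to $C\cap\{y_1,\ldots,y_n\}$ under the bijection $x_i\mapsto y_i$, the class $\mathcal{C}$ shatters $\{y_1,\ldots,y_n\}$. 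Therefore $\VC(G^{-1}(\mathcal{C}))\le\max\{\VC(\mathcal{C}),1\}<\infty$.

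\emph{Parts (a) and (b).} I would use counting. For (b), the traces satisfy $\Delta_{\mathcal{C}\cup\mathcal{D}}(F)\le\Delta_{\mathcal{C}}(F)+\Delta_{\mathcal{D}}(F)$. For (a), each trace $(C\cup D)\cap F=(C\cap F)\cup(D\cap F)$ is determined by the pair of traces, so $\Delta(F)\le\Delta_{\mathcal{C}}(F)\,\Delta_{\mathcal{D}}(F)$. In both cases Sauer--Shelah bounds the right-hand side by a polynomial in $n$, namely $2(n+1)^{\max\{v,w\}}$ for (b) and $(n+1)^{v+w}$ for (a), where $v=\VC(\mathcal{C})$ and $w=\VC(\mathcal{D})$. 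Since $2^n$ eventually exceeds any polynomial, no sufficiently large set is shattered, and the resulting class is a VC-class.

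The only nontrivial ingredient is Sauer--Shelah, which I would quote rather than reprove. The remaining care points are the edge cases: the convention for $n\ge2$ in (c), and handling empty collections.
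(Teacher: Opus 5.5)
Your proof is correct. The paper gives no argument of its own and only cites van der Vaart--Wellner and Shalev-Shwartz--Ben-David; your Sauer--Shelah counting for (a)--(b) and your argument for (c), that a shattered set maps injectively onto a set shattered by $\mathcal{C}$, are the standard proofs behind those citations. In (c) the $\max\{\cdot,1\}$ is unnecessary, since a singleton $\{x_1\}$ shattered by $G^{-1}(\mathcal{C})$ already gives $\{G(x_1)\}$ shattered by $\mathcal{C}$, so in fact $\VC(G^{-1}(\mathcal{C}))\le\VC(\mathcal{C})$.
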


We define similar concepts for functions by considering their subgraphs (or, hypographs); see \cite[Section 2.6.2]{vanderVaartWellner.23}.
For a collection of functions $\mathcal{H}=\{h_i:\Xi \to \reals \mid i \in I\}$, we call $\mathcal{H}$ a VC-subgraph class if the collection $\{\{(\xi, r) \in \Xi \times \reals \mid r < h_i(\xi)\} \mid i \in I\}$ is a VC-class.

\subsection{Definability, NIP, and o-minimality}\label{sec:pre-model}
For general introductions to model theory, see \cite{chang1990model}.
A first-order language $\mathcal L$ consists of relation, function, and constant symbols. An $\mathcal L$-structure $\mathcal M$ consists of a nonempty universe $M$ together with interpretations of all symbols of $\mathcal L$ on $M$. An $\mathcal L$-formula is built from atomic formulas using logical connectives and quantifiers. 
 A set
$S \subset M^d$ is said to be \emph{definable} in the structure $\mathcal{M}$ if there exist an $\mathcal{L}$-formula $
\varphi(x;y),$
with variables $x=(x_1,\ldots,x_d)$ and $y=(y_1,\ldots,y_m)$, and parameter $b\in M^m$ such that
\[
S=\varphi(M^d;b)
=\{a\in M^d \mid \mathcal{M}\models\varphi(a;b)\}.
\]
Here, $\mathcal{M}\models\varphi(a;b)$ means that the formula $\varphi(a;b)$ holds in $\mathcal{M}$. We may simply write $\varphi(a;b)$ when $\mathcal{M}$ is clear from context.
The semicolon in $\varphi(x;y)$ has no syntactic significance; it is used
only to distinguish the object variables $x$ from the parameter variables
$y$.
To emphasize the dependence, we may say that the set $S$ is definable in the structure $\mathcal{M}$ by the formula $\phi$ with parameter $b$. A function is called definable if its graph is definable. Let $\mathcal{M}$ and $\mathcal{M}'$ have the same universe. If the language of $\mathcal{M}'$ extends that of $\mathcal{M}$ and the interpretations of the common symbols agree, then $\mathcal{M}'$ is an \emph{expansion} of $\mathcal{M}$, and $\mathcal{M}$ is a \emph{reduct} of $\mathcal{M}'$. In this paper, we will only consider structures that are expansions of the real ordered field $\R-alg=(\reals, +, \cdot, <, 0, 1)$.

For illustration, we examine $\R-alg$ more closely. The structure $\R-alg$ consists of the universe $\reals$, the functions $\{+,\cdot\}$, the relation $\{<\}$, and the constants $\{0,1\}$. Hence, the language of $\R-alg$ consists of the symbols $(+,\cdot,<,0,1)$. A formula in that language is built inductively from atomic formulas using the logical connectives $\neg, \wedge, \vee$, together with the quantifiers $\exists, \forall$. For example, define a formula $\phi(x;y) = (x_1 > y) \wedge (x_2 > y)$, where $x=(x_1,x_2) \in \reals^2$ is the object variable and $y \in \reals$ is the parameter variable. For every $b \in \reals$, one has $\phi(\reals^2; b) = (b,\infty)^2$. Thus, a single formula $\phi$ defines a family of sets as the parameter $b$ varies; in particular, the choice $b=0$ defines $(0,\infty)^2$. Notably, by the Tarski--Seidenberg theorem \cite[Theorem 2.6]{coste2002introduction}, the sets definable in the structure $\R-alg$ are precisely the semialgebraic sets, namely, finite Boolean combinations of sets defined by polynomial equalities and inequalities with real coefficients; see \cite[Section 2.1.1]{coste2002introduction}.

Functions definable in an arbitrary expansion of $\R-alg$ can be quite wild. We will focus on expansions that have the \emph{non-independence property} (NIP); see \cite[Definition 2.1]{simon2014guide}.
\begin{definition}[{\cite[Definitions~1.1 and 1.2]{laskowski1992vapnik}}]
	A formula $\phi(x_1,\ldots, x_m; y_1, \cdots, y_k)$ has the independence property (IP) with respect to a structure $\mathcal{M}$ with universe $M$ if for every $n \in \nats$ there is a sequence $(a_1, \ldots, a_n)$ of elements from $M^k$ so that for every $I \subset [n]$ there is $c_I \in M^m$ with
	\[
	\mathcal{M} \models \phi(c_I; a_i) \quad \iff \quad i \in I.
	\]
A formula has IP if it has the independence property, and has NIP otherwise. A structure $\mathcal{M}$ has NIP if every formula in its language has NIP with respect to $\mathcal{M}$; otherwise, it has IP.
\end{definition}
Our focus is on functions and sets definable in NIP expansions of $\R-alg$. A key consequence of such definability is the following observation.

\begin{lemma}[{\cite[Proposition 1.3]{laskowski1992vapnik}}]\label{lem:nip-vc}
	For a structure $\mathcal{M}$ with universe $M$ and a formula $\phi(x; y)$ with $x \in M^m$ and $y \in M^k$ in the language of $\mathcal{M}$, the family 
	\[
	\big\{ \{c \in M^m \mid \mathcal{M} \models \phi(c; b)\} \bigm| b \in M^k\big\}
	\]
	is a VC-class if and only if the formula $\phi$ has NIP.
\end{lemma}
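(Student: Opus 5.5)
The plan is to prove both directions by unwinding the definitions. Nothing is used beyond the definition of shattering and the definition of IP. Write $\mathcal{C}_\phi=\{\phi(M^m;b)\mid b\in M^k\}$, and for a finite set $\{c_1,\ldots,c_n\}\subset M^m$ note that $\mathcal{C}_\phi$ shatters it precisely when
\[
\forall I\subset[n]\ \exists b_I\in M^k:\quad \mathcal{M}\models\phi(c_i;b_I)\iff i\in I .
\]
In this condition the object variables range over the shattered points and the parameters play the role of the selecting sets. The IP definition, however, has the roles of the two variable blocks reversed. So the key step is to pass to the \emph{dual} formula $\phi^*(y;x):=\phi(x;y)$. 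Then $\phi^*$ has IP exactly when $\mathcal{C}_\phi$ shatters sets of every finite size, i.e.\ $\VC(\mathcal{C}_\phi)=\infty$. In other words, $\mathcal{C}_\phi$ is a VC-class if and only if the dual formula $\phi^*$ has NIP.

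To close the gap between $\phi$ and $\phi^*$, I would use the classical duality lemma: $\phi$ has IP if and only if $\phi^*$ has IP. The direction ``$\phi^*$ has IP $\Rightarrow$ $\phi$ has IP'' goes as follows. Take $n=2^N$ and suppose $\phi^*$ (equivalently $\mathcal{C}_\phi$) shatters $\{c_J\}_{J\subset[N]}$, realized by parameters $b_{\mathcal{S}}$ for each $\mathcal{S}\subset 2^{[N]}$. Set $a_i:=b_{\{J\,:\,i\in J\}}$ for $i\in[N]$. Given $I\subset[N]$, choose the witness $c_I$. Then $\phi(c_I;a_i)$ holds iff $I\in\{J: i\in J\}$, i.e.\ iff $i\in I$. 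This exhibits IP for $\phi$ with $N$ points. The converse direction is the same argument with the variable blocks swapped.

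Combining the two steps: $\mathcal{C}_\phi$ is a VC-class $\iff$ $\phi^*$ has NIP $\iff$ $\phi$ has NIP. In fact the construction gives the quantitative bound $\VC$ of the dual family $\ge \lfloor\log_2 \VC(\mathcal{C}_\phi)\rfloor$, which Laskowski records, though only finiteness is needed here. I expect the main obstacle to be bookkeeping rather than substance. One must keep track of which block of variables is being shattered in the VC definition versus the IP definition. The statement as phrased pairs the VC-class property of the parameter-indexed family directly with NIP of $\phi$, so the duality lemma is exactly what reconciles the two orientations.
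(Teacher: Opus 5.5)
Your proof is correct. The paper gives no argument of its own for this lemma; it imports it directly from Laskowski's Proposition~1.3, so there is nothing internal to compare against. What you wrote is the standard self-contained proof behind that citation, in two steps:
\begin{enumerate}
\item a direct unwinding of the definitions, showing that $\mathcal{C}_\phi$ fails to be a VC-class exactly when the dual formula $\phi^*(y;x)=\phi(x;y)$ has IP;
\item the classical coding trick showing that IP is invariant under swapping the variable blocks: index $2^N$ shattered points by subsets $J\subset[N]$ and set $a_i=b_{\{J:\ i\in J\}}$.
\end{enumerate}

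The value of writing it out is that it exposes something the bare citation hides. Under the paper's convention for IP, the selectors $c_I$ lie in the object block $M^m$ and the shattered points $a_i$ lie in the parameter block $M^k$. So IP of $\phi$ is really a statement about the dual family $\{\{b\in M^k\mid \mathcal{M}\models\phi(c;b)\}\mid c\in M^m\}$. The lemma is therefore not a mere restatement of definitions, and your duality step is genuinely needed.

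One bookkeeping point you use implicitly is fine as it stands. The tuples $a_1,\ldots,a_n$ in an IP witness are automatically distinct when $n\ge 2$: if $a_i=a_j$ with $i\neq j$, then $I=\{i\}$ gives a contradiction. This is what lets you identify IP witnesses with shattered sets of size exactly $n$.
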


VC-classes play a central role in statistical learning theory \cite[Chapter~6]{shalev2014understanding}. Motivated by this connection, definability in NIP structures has been studied extensively in learning theory \cite{macintyre1993finiteness,karpinski1997polynomial,livni2013honest,chase2019model} and, more recently, in stochastic programming \cite{nguyen2026rates}. These works focus primarily on uniform convergence of function values, which is insufficient to control first-order objects such as subdifferentials.

Another important concept in model theory is that of an \emph{o-minimal} structure; see \cite{van1996geometric} and \cite[Definition 1.4]{coste1999introduction}. An expansion of $\R-alg$ is o-minimal if every definable subset of $\reals$ is a finite union of points and intervals.
 Such structures have been widely used in optimization to exclude pathological behavior; see \cite{ioffe2009invitation} and the references therein. The real ordered field $\R-alg$ introduced above  and the real exponential field $\mathcal{R}_{\rm exp}=(\R-alg,\exp)$ are both o-minimal structures; see \cite[Exercise 1.7]{coste1999introduction} and  \cite{wilkie1996model}.

\begin{proposition}[{\cite[Example~2.12]{simon2014guide}}]\label{prop:om-NIP}
Every o-minimal structure has NIP. 
\end{proposition}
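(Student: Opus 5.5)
The statement to prove is \Cref{prop:om-NIP}: every o-minimal expansion $\mathcal{M}$ of $\R-alg$ has NIP. By definition, it suffices to fix an arbitrary formula $\phi(x;y)$ with $x\in\reals^m$, $y\in\reals^k$ and show that it does not have IP. The plan is to reduce to formulas with a single object variable ($m=1$), and then use o-minimality together with uniform finiteness.

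\emph{Step 1 (reduction to $m=1$).} It is a standard fact in model theory that a structure has NIP as soon as every formula $\phi(x;y)$ with $|x|=1$ has NIP. This is Shelah's reduction; see \cite[Proposition 2.11]{simon2014guide}. Its proof is a combinatorial Ramsey/indiscernible-sequence argument that is independent of o-minimality. I would cite it rather than reprove it, and I expect it to be the main nontrivial ingredient. If a self-contained route were needed, I would first try to reduce the object variables one at a time via the indiscernible-sequence characterization of IP.

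\emph{Step 2 (uniform finiteness).} Fix $\phi(x;y)$ with $x\in\reals$. For each $b\in\reals^k$, the set $\phi(\reals;b)$ is definable, so by o-minimality it is a finite union of points and intervals. I would then invoke the uniform finiteness theorem for o-minimal structures (\cite[Chapter 3]{van1996geometric}). It gives an $N$, independent of $b$, such that every $\phi(\reals;b)$ is a union of at most $N$ points and intervals. This uniformity is where the real content of o-minimality enters, via cell decomposition.

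\emph{Step 3 (counting).} Work in the dual form of IP, where the parameters $a_1,\ldots,a_n\in\reals^k$ are fixed and we ask which subsets are cut out by varying $c\in\reals$. Let $D_i=\phi(\reals;a_i)$. Each $D_i$ has at most $2N$ boundary points, so the $n$ sets $D_1,\ldots,D_n$ together have at most $2Nn$ boundary points. These points partition $\reals$ into at most $4Nn+1$ pieces (points and open intervals). On each piece, the pattern $\{i : c\in D_i\}$ is constant. Hence at most $4Nn+1$ subsets $I\subset[n]$ can be realized. This is less than $2^n$ for large $n$, so $\phi$ does not have IP.

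Combining Steps 1–3, every formula has NIP, and therefore $\mathcal{M}$ has NIP.
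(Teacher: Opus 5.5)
Your proof is correct. The paper gives no argument of its own beyond the pointer to \cite[Example~2.12]{simon2014guide}, and your route (Shelah's reduction to a single object variable, then uniform finiteness to bound the number of realizable patterns by $4Nn+1$, which is eventually below $2^n$) is essentially the standard argument that citation stands for. One bibliographic slip: cite uniform finiteness from Chapter~3 of van den Dries's book \cite{van1998tame}, not from the van den Dries--Miller article \cite{van1996geometric}, which is a journal paper without chapters.
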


 Conversely, definability of the integers $\mathbb{Z}$ provides a simple sufficient condition for IP and can be useful in constructing counterexamples.

\begin{proposition}[{\cite[Observation~2.6 and footnote~4]{krapp2025tameness}}]\label{prop:Z-IP}
If the set $\mathbb{Z}$ is definable in an expansion $\mathcal{M}$ of $\R-alg$, then $\mathcal{M}$ has IP.
\end{proposition}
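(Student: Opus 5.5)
The plan is to exhibit a single formula with IP in the language of $\mathcal{M}$; by definition this shows that $\mathcal{M}$ has IP. Since $\mathbb{Z}$ is definable, there are a formula $\zeta(x;w)$ and a parameter $b$ with $\zeta(\reals;b)=\mathbb{Z}$. I will build a formula $\psi(x;y)$, with parameter variables extended by $w$ and instantiated at $b$, such that the family $\{\psi(\reals;a)\mid a\}$ shatters arbitrarily large finite sets. The claim then follows from \Cref{lem:nip-vc}: if the family is not a VC-class, the formula has IP.

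The natural candidate comes from binary expansions, since with $\mathbb{Z}$ available, integer part and parity are expressible. First, $\lfloor t\rfloor=k$ is defined by $\zeta(k;b)\wedge k\le t<k+1$. Second, ``$k$ is even'' is defined by $\exists m\,(\zeta(m;b)\wedge k=m+m)$. Using multiplication, I then define the relation $B(c;a)$ expressing that ``the $a$-th binary digit of $c$ is $1$'' for positive integers $a$ and reals $c$. It is easiest to parametrize by $p=2^{a}$ instead of $a$ itself, since exponentiation is not directly available. So let
\[
\psi(c;p)\;:=\;\exists k\,\big(\zeta(k;b)\wedge k\le c\,p<k+1\wedge \neg\,\mathrm{even}(k)\big),
\]
which says that $\lfloor cp\rfloor$ is odd. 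To shatter a set of size $n$, take the parameters $a_i=2^{i}$ for $i\in[n]$. For $I\subset[n]$, set $c_I=\sum_{i\in I}2^{-i}$. Then $\lfloor c_I2^{i}\rfloor=\sum_{j\in I,\,j\le i}2^{i-j}$, which is odd exactly when $i\in I$. Hence $\mathcal{M}\models\psi(c_I;a_i)$ if and only if $i\in I$. This is precisely the IP condition in the definition, with $m=1$ object variable and the parameter tuple $(p,w)$ with $w=b$ fixed.

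One technicality needs care. The definition of IP quantifies over sequences in $M^k$ for the full parameter tuple, while $b$ is fixed here. This is harmless, because the sequence $(a_i,b)$ is allowed. The other point to check is that $\psi$ is a genuine first-order formula in the language of $\mathcal{M}$. It only uses $\zeta$, $+$, $\cdot$, $<$, $1$ and quantifiers, all of which are available since $\mathcal{M}$ expands $\R-alg$.

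I expect the main obstacle to be only bookkeeping: verifying the parity computation for $\lfloor c_I 2^i\rfloor$. Writing $c_I2^i=\sum_{j\in I,j\le i}2^{i-j}+\sum_{j\in I,j>i}2^{i-j}$, the first sum is an integer whose parity is $\mathbf{1}_I(i)$. The tail is at most $\sum_{\ell\ge1}2^{-\ell}$ but strictly less than $1$, because $I$ is finite. So the floor equals the first sum.
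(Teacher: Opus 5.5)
Your proof is correct, and it takes a different route from the paper only in that the paper gives no argument here at all. It imports the fact from \cite[Observation~2.6 and footnote~4]{krapp2025tameness}, whereas you verify the IP definition directly.

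You write down one explicit formula $\psi(c;p,w)$ expressing ``$\lfloor cp\rfloor$ is odd'', using only $+$, $\cdot$, $<$, $1$ and the formula $\zeta$ defining $\mathbb{Z}$. You absorb the defining parameter $b$ into the parameter variables and shatter the tuples $(2^i,b)$, $i\in[n]$, with $c_I=\sum_{i\in I}2^{-i}$. The floor and parity bookkeeping is right, and so is the strict tail bound $\sum_{j\in I,\,j>i}2^{i-j}<1$ coming from finiteness of $I$.

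The two approaches buy different things. The citation places the fact in the model-theoretic literature, where defining $\mathbb{Z}$ in an expansion of the real field is a standard source of wildness, since it makes the ring $(\mathbb{Z},+,\cdot)$ definable. Your argument lets a reader check the claim in a few lines, and it shows that IP already occurs for a formula with a single object variable. An equally short alternative witness is divisibility, $\exists z\,(\zeta(z;w)\wedge c=p\cdot z)$, with $p$ ranging over the first $n$ primes and $c_I=\prod_{i\in I}p_i$; this avoids floors and parity entirely.

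One framing remark. Your opening paragraph routes the conclusion through \Cref{lem:nip-vc} and shattering by the family $\{\psi(\reals;a)\}$. What you actually verify is the IP definition itself, namely shattering of the parameter tuples, so the lemma is unnecessary. In any case, $\psi$ depends on $(c,p)$ only through the product $cp$, so primal and dual shattering coincide here.
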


\section{SLLNs in Lipschitz Pseudometric}\label{sec:lslln}

In this section, we present our general results for the convergence of $E^\nu f$ to $Ef$ in terms of the Lipschitz pseudometric. 
We begin with a blanket assumption.

\begin{assumption}[blanket]\label{as:f-S}
Given a probability space $(\Xi, \mathcal{A}, P)$,
for a function $f:\Xi \times \reals^d \to \reals$, a random variable $\bm{\xi}$ with law $P$, and a set $X \subset \reals^d$ with $0 \in X$, assume the following hold.
\begin{enumerate}[label=\textnormal{(\roman*)}]
	\item For any $x \in \reals^d$, the function $f(\cdot, x)$ is $\mathcal{A}$-measurable.
	\item $\Ex[|f(\bm{\xi}, 0)|] < \infty$.
	\item There exists an $\mathcal{A}$-measurable $L:\Xi \to [0,\infty)$ with $\Ex L < \infty$ such that, for any $\xi \in \Xi$, the function $f(\xi, \cdot)$ is $L(\xi)$-Lipschitz  on $X$.
\end{enumerate}
\end{assumption}

\Cref{as:f-S} imposes mild conditions ensuring well-definedness; see also \cite{shapiro2007uniform,ruan2024subgradient} for similar assumptions. The condition $0\in X$ is without loss of generality. Indeed, for any $x_0\in X$, one may replace $X$ by $X-\{x_0\}$
and $f$ by the translated function $(\xi,x)\mapsto f(\xi,x+x_0)$. \Cref{as:f-S} requires no compactness, openness, or measurability assumption on $X$.

\subsection{Separability}

Our first result applies to functions satisfying the following topological condition.

\begin{assumption}[separability]\label{as:separable}
	Given $f:\Xi \times \reals^d \to \reals$ and $X \subset \reals^d$ with $0 \in X$, assume the collection $\{f(\xi, \cdot)|_X\mid \xi \in \Xi\}$  lies in a separable subspace of $(\Lip(X), \|\cdot\|_{\lip})$.
\end{assumption} 

Recall that a topological space is separable if it contains a countable dense subset. Under \Cref{as:f-S}, a straightforward  sufficient condition for \Cref{as:separable} is $\Xi$ being countable, which is natural when the random variable $\bm{\xi}$ has a discrete distribution. Separability, possibly in different forms, has already been used to establish SLLNs for random variables taking values in infinite-dimensional spaces; see, e.g., \cite{ledoux1991probability,devale2025uniform,teran2008uniform}. We defer the discussion on functions satisfying \Cref{as:separable} to \Cref{sec:sep}. 

\begin{theorem}[separability]\label{thm:countable}
Let $(\Xi,\mathcal{A})$ be a measurable space and $\bm{\xi}, \bm{\xi}^1,\bm{\xi}^2,\ldots$ be $\Xi$-valued iid random variables on the probability space $(\Omega,\mathcal{F},\mathbb{P})$.
Suppose that $f:\Xi\times\reals^d\to\reals$ and $X\subset\reals^d$ satisfy Assumptions~\ref{as:f-S} and \ref{as:separable}. Then 
\[
\lim_{\nu \to \infty} d_X(E^\nu f, E f) = 0,\qquad \mathbb{P}\text{-a.s.}
\]
\end{theorem}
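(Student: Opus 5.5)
The plan is to reduce the statement to the strong law of large numbers in a separable Banach space (Mourier's theorem), applied to the random element $\bm{F}=f(\bm{\xi},\cdot)|_X$ of $(\Lip(X),\|\cdot\|_{\lip})$. By \Cref{as:separable}, there is a closed separable subspace $V\subset\Lip(X)$ containing every slice $f(\xi,\cdot)|_X$. The first step is measurability. I will show that $\xi\mapsto f(\xi,\cdot)|_X$ is Borel measurable as a map $\Xi\to V$. Since $V$ is separable, its Borel $\sigma$-algebra is generated by the open balls. It therefore suffices that, for each fixed $g\in V$, the map $\xi\mapsto\|f(\xi,\cdot)|_X-g\|_{\lip}$ is $\mathcal{A}$-measurable.

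This norm is a supremum over the uncountable set of pairs $x\neq y$ in $X$, so it needs to be reduced to countably many pairs. Let $D\subset X$ be countable and dense in $X$; such a $D$ exists because $X\subset\reals^d$ is separable. Every $h\in\Lip(X)$ is continuous on $X$, so
\[
\glip_X h=\sup_{x,y\in D,\ x\neq y}\frac{h(x)-h(y)}{\|x-y\|}.
\]
When $X$ is a singleton the modulus is $0$ and there is nothing to check. Each term $\xi\mapsto f(\xi,x)-g(x)$ is measurable by \Cref{as:f-S}(i), so the countable supremum is measurable. Hence $\bm{F}^i=f(\bm{\xi}^i,\cdot)|_X$ are iid $V$-valued random elements.

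The second step is Bochner integrability. By \Cref{as:f-S}(ii)--(iii),
\[
\Ex\|\bm{F}\|_{\lip}\le \Ex|f(\bm{\xi},0)|+\Ex L<\infty .
\]
So $\bm{F}$ is Bochner integrable in the separable Banach space $V$. Mourier's SLLN (e.g., \cite{ledoux1991probability}) then gives $\|\frac1\nu\sum_{i=1}^\nu \bm{F}^i-\Ex\bm{F}\|_{\lip}\to0$ $\mathbb{P}$-a.s., where $\Ex\bm{F}\in V$ is the Bochner mean. The sample average is exactly $E^\nu f|_X$.

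The last step identifies $\Ex\bm{F}$ with $Ef|_X$. For each $x\in X$, the evaluation map $h\mapsto h(x)$ is a continuous linear functional on $\Lip(X)$, since $|h(x)|\le(1+\|x\|)\|h\|_{\lip}$. Bochner integrals commute with bounded linear functionals, so $(\Ex\bm{F})(x)=\Ex f(\bm{\xi},x)=Ef(x)$, which is finite. Therefore $d_X(E^\nu f,Ef)=\|E^\nu f|_X-(\Ex\bm{F})\|_{\lip}\to0$ almost surely.

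I expect the measurability step to be the main obstacle. It is where the pointwise measurability in \Cref{as:f-S}(i) must be upgraded to Borel measurability into $V$, and it relies on the countable-dense reduction of $\glip$ together with the separability of $V$; without separability, Pettis measurability would fail.
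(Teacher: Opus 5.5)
Your proposal is correct and follows essentially the same route as the paper. Both arguments establish Borel measurability of $\xi\mapsto f(\xi,\cdot)|_X$ into the closed separable subspace by reducing $\glip$ to a countable dense set and using preimages of balls, obtain Bochner integrability from the bound $|f(\xi,0)|+L(\xi)$, apply the separable Banach-space SLLN of \cite{ledoux1991probability}, and identify the Bochner mean with $Ef$ on $X$ through the bounded evaluation functionals.
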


In essence, \Cref{thm:countable} follows from a strong law for separable Banach-valued random variables; see, e.g., \cite[Chapter~7]{ledoux1991probability}. The main task is to verify its applicability. To this end, \Cref{as:separable} allows us to regard the mapping $\xi\mapsto f(\xi,\cdot)|_X$ as a random variable taking values in a separable subspace of $(\Lip(X),\|\cdot\|_{\lip})$. The proof then concludes by identifying its Bochner integral with the corresponding pointwise integral. We defer the proof of \Cref{thm:countable} to \Cref{sec:prf-countable}.

As mentioned above, we refer to such convergence of $E^\nu f$ to $Ef$ in the Lipschitz pseudometric as a Lipschitzian SLLN. Indeed, \Cref{thm:countable} generalizes Kolmogorov's strong law of large numbers.

\begin{example}[Kolmogorov's SLLN]
Suppose that $(\Xi,\mathcal{A}) = (\reals,\mathcal{B}(\reals))$ and $\bm{\xi}, \bm{\xi}^1,\bm{\xi}^2,\ldots$ be $\Xi$-valued iid random variables on the complete probability space $(\Omega,\mathcal{F},\mathbb{P})$ with $\Ex[|\bm{\xi}|] < \infty$. 
Let $d=1,X=\reals$, $f(\xi, x)=\xi$ for all $x \in \reals$. Hence, Assumptions~\ref{as:f-S} and \ref{as:separable} hold trivially by the separability of $\reals$, and \Cref{thm:countable} reduces to Kolmogorov's SLLN.
\end{example}

In contrast to \Cref{as:f-S}, one may not argue that \Cref{as:separable} is a mild requirement. Indeed, as shown in the following example, adapted from \cite[Example 3.4]{norkin-wets}, separability can fail for a simple, convex $f:\reals^2 \to \reals$.

\begin{example}[nonseparability]\label{ex:nsep}
Let $d=1$, $X=\reals$, $\Xi=[0,1]$, $\mathcal{A}=\mathcal{B}(\Xi)$, and $f(\xi,x)=|x-\xi|.$
Then \Cref{as:f-S} holds. Moreover, for any distinct $\xi,\xi'\in\Xi$, one has $\|f(\xi,\cdot)-f(\xi',\cdot)\|_{\lip}=2.$
Hence, the family $\{f(\xi,\cdot)|_X\mid \xi\in\Xi\}$ is not contained in any separable subspace of $\Lip(X)$.
\end{example}

Therefore, without \Cref{as:separable}, the applicability of the Banach-valued strong law is not automatic. 
One may wonder whether the lack of separability reflects a genuine obstruction. The following result, as an easy corollary of \cite[Theorem 3]{tian2025failure}, shows that the conclusion of \Cref{thm:countable} might fail without the separability  assumption.

 \begin{proposition}[failure]\label{prop:counterex}
Let $X=[-2,2]^2\subset \reals^2$ and $\bm{\xi}, \bm{\xi}^1,\bm{\xi}^2,\ldots$ be iid random variables on the complete probability space $(\Omega,\mathcal{F},\mathbb{P})$, each uniformly distributed on $\Xi=[0,1]$.	
There exist a function $f:\Xi\times \reals^2 \to \reals$ and constant $c > 0$ such that the following hold.
\begin{enumerate}[label=\textnormal{(\alph*)}]
	\item \Cref{as:f-S} holds, and $f(\xi,\cdot)$ is convex for every $\xi\in\Xi$.
	\item $\mathbb{P}$-a.s., one has $\liminf_{\nu \to \infty} d_X(E^\nu f, E f) \geq c$.
\end{enumerate}
\end{proposition}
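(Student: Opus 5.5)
We will not build a counterexample from scratch. Instead we will import the construction of \cite[Theorem~3]{tian2025failure} and convert its conclusion, which concerns subdifferentials, into the Lipschitz pseudometric through the bound \eqref{eq:intro-subd-bound} (\Cref{prop:subd}).

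\textbf{Step 1 (the construction).} We take from \cite[Theorem~3]{tian2025failure} a function $f:[0,1]\times\reals^2\to\reals$ for uniform $\bm{\xi}$ with three properties: $f(\xi,\cdot)$ is convex for every $\xi$; $f$ satisfies the standard integrability and Lipschitz conditions; and the uniform subdifferential law fails on a neighborhood of the origin. Concretely, there is a constant $c>0$ such that, $\mathbb{P}$-a.s., for all large $\nu$,
\[
\nsup_{x\in U}\setd\bigl(\partial E^\nu f(x),\partial Ef(x)\bigr)\ge c
\]
for some open set $U\subset(-2,2)^2$.

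\textbf{Step 2 (verifying \Cref{as:f-S}).} We check the three items for this $f$. Measurability of $f(\cdot,x)$ and integrability of $f(\bm\xi,0)$ should follow from the explicit form of the construction. For item (iii) we need an integrable Lipschitz modulus $L(\xi)$ on $X=[-2,2]^2$. If the construction is only globally Lipschitz with a uniform constant, this holds immediately. If it uses a rescaled or truncated piece, we would restrict to $X$, or modify $f$ outside a neighborhood of $U$ while keeping convexity, for instance by taking a maximum with a large affine function.

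\textbf{Step 3 (transfer).} Since $U\subset\nt X$, every point of $U$ is interior, so \Cref{prop:subd} gives
\[
\nsup_{x\in U}\setd\bigl(\partial E^\nu f(x),\partial Ef(x)\bigr)\le d_U(E^\nu f,Ef).
\]
The remaining issue is that $d_U$ and $d_X$ may differ. The Lipschitz part of $d_X$ is a supremum over pairs in $X\supset U$, so it dominates the Lipschitz part of $d_U$. The value part is measured at the base point $0$, which is the same for both, provided $0\in U$; we can arrange this by translating $U$ or by shrinking it slightly. If $0\notin U$, we instead apply \Cref{prop:subd} on $\nt X$ itself, which is legitimate because $\nt X$ is open and contains $0$. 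Either way, $d_X(E^\nu f,Ef)\ge c$ for all large $\nu$ almost surely, which gives $\liminf_\nu d_X(E^\nu f,Ef)\ge c$.

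\textbf{Main obstacle.} Step 2 is where we expect the difficulty: the exact form of the cited construction and its domain must match $X=[-2,2]^2$ while preserving convexity and the integrable Lipschitz bound. We will also need to confirm that the cited failure is stated in Hausdorff distance of limiting subdifferentials, or of Clarke subdifferentials, which coincide with limiting ones for convex functions, and that it holds almost surely rather than only in probability. If it is stated only in probability, we would strengthen it by noting that the failure event is tail-measurable and applying the Hewitt--Savage zero--one law.
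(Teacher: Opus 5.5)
Your proposal is correct and is essentially the paper's argument: you import the convex construction of \cite[Theorem~3]{tian2025failure}, whose failure points $\bm{p}^\nu\in\{(0,0)\}\cup\{(0,k^{-1})\}_{k\in\nats}$ lie in $\nt X$ and are points where both $E^\nu f$ and $Ef$ are continuously differentiable, and you bound the gradient (equivalently, subdifferential) gap there by $\lip(E^\nu f-Ef)(\bm{p}^\nu)\le\glip_X(E^\nu f-Ef)\le d_X(E^\nu f,Ef)$, which is exactly what \Cref{prop:subd} packages. Two small adjustments are needed: since $d_X$ here uses a general norm while \Cref{prop:subd} is stated for $\|\cdot\|_2$, your constant must absorb norm equivalence (the paper takes $c$ with $\|\cdot\|_*\ge 2c\|\cdot\|_2$), and the cited result already gives the almost sure $\liminf$, so you do not need your zero--one-law fallback, which from an in-probability statement would only have yielded a $\limsup$ bound.
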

\begin{proof}
	Let $f$ be the bivariate convex construction in \cite[Theorem 3]{tian2025failure}, which satisfies \Cref{as:f-S}. Let $c > 0$ be such that $\|\cdot\|_* \geq 2c \| \cdot \|_2$, where $\|\cdot\|_*$ is the dual norm of $\| \cdot\|$. From \cite[Section 3.4, Step 5]{tian2025failure}, there exist random points $\{\bm{p}^\nu\}_\nu \in  \{(0,0)\} \cup \{(0,k^{-1})\}_{k \in \nats}\subset X$ such that, $\mathbb{P}$-a.s., $E^\nu f$ and $E f$ are both continuously differentiable at $\bm{p}^\nu$ and 
	\[
	\liminf_{\nu \to \infty} d_X(E^\nu f, E f) \geq \liminf_{\nu \to \infty} 2c\|\nabla E^\nu f(\bm{p}^\nu) - \nabla E f(\bm{p}^\nu)\|_2 \geq c,
	\]
	where the last inequality is from \cite[Theorem 3]{tian2025failure}.
\end{proof}

\subsection{Definability}

As shown in \Cref{prop:counterex}, \Cref{as:f-S} alone does not guarantee convergence in the Lipschitz pseudometric. Conversely, \Cref{as:separable} can be restrictive and, as illustrated by \Cref{ex:nsep}, excludes even simple convex semialgebraic functions. In this subsection, we introduce an alternative structural assumption that avoids both the Banach-valued strong law and the associated separability requirement.

\begin{assumption}[piecewise uniform definability]\label{as:definable}
Let $\{q_k\}_{k \in \nats}$ be a dense subset of $X\subset \reals^d$.
	Given a measurable space $(\Xi, \mathcal{A})$ and a function $f:\Xi \times \reals^d \to \reals$, assume that, for some index set $I \subset \nats$ and every $\ell \in I$, there exist
	\begin{enumerate}[label=\textnormal{(\roman*)}]
	\item a universally measurable set $\Xi_\ell \subset \Xi$ relative to $\mathcal{A}$,
		\item an NIP expansion $\mathcal{M}_\ell$ of $\R-alg$, and
		\item a formula $\phi_\ell(x,r; z)$ with $x \in \reals^d$, $r \in \reals$, and $z \in \reals^{k_\ell}$ in the language of $\mathcal{M}_\ell$ 
	\end{enumerate}
such that $\cup_{\ell \in I} \Xi_\ell = \Xi$ and, for any $\xi \in \Xi_\ell$, there exists parameter $z \in \reals^{k_\ell}$ such that, for each $k \in \nats$,
\begin{equation}\label{eq:def-q}
	r = f(\xi, q_k)\quad \iff \quad \mathcal{M}_{\ell} \models \phi_\ell(q_k, r; z).
\end{equation}
\end{assumption} 

For $\ell\in I$ and $\xi\in\Xi_\ell$, if $f(\xi,\cdot)$ is definable by $\phi_\ell$, then \eqref{eq:def-q} follows immediately by examining only $\{f(\xi,q_k)\}_{k\in\nats}$.
Recall that a set $S\subset\Xi$ is universally measurable relative to $\mathcal{A}$ if it is measurable with respect to every completion of $\mathcal{A}$. The set $X$ need not be definable or measurable. Although \Cref{as:definable} may initially appear restrictive, it naturally generalizes joint definability in a fixed o-minimal structure; see \Cref{sec:j-def}. Here, countable covers $\{\Xi_\ell\}_{\ell\in I}$ of $\Xi$ are standard tools for establishing uniform convergence in empirical process theory \cite{van1996new,van2000preservation} and learning theory \cite[Chapter 7]{shalev2014understanding}. In \Cref{sec:pu-def}, we show that, when the languages of the structures $\{\mathcal{M}_\ell\}_{\ell\in I}$ are countable, uniform definability in \Cref{as:definable} can be replaced by pointwise definability; see \Cref{as:p-def}.

\Cref{as:definable} encompasses a broad class of functions arising in practice and we defer a detailed discussion to \Cref{sec:def}. The construction in \Cref{ex:nsep} satisfies \Cref{as:definable} because $f$ is jointly semialgebraic. Thus, \Cref{as:definable} may hold even when the separability condition fails.

Under the definability condition in \Cref{as:definable}, we obtain the following result.

\begin{theorem}[piecewise uniform definability]\label{thm:lln}
Let $(\Xi,\mathcal{A})$ be a measurable space and $\bm{\xi}, \bm{\xi}^1,\bm{\xi}^2,\ldots$ be $\Xi$-valued iid random variables on the complete probability space $(\Omega,\mathcal{F},\mathbb{P})$.
Suppose that $f:\Xi\times\reals^d\to\reals$ and $X\subset\reals^d$ satisfy Assumptions~\ref{as:f-S} and \ref{as:definable}. Then
\[
\lim_{\nu \to \infty} d_X(E^\nu f, E f) = 0,\qquad \mathbb{P}\text{-a.s.}
\]
\end{theorem}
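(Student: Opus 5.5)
The plan is to reduce the Lipschitzian law to a uniform strong law (a Glivenko--Cantelli property) over a \emph{countable} class of difference quotients, and then to verify that property piece by piece using the VC structure supplied by NIP definability.

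\emph{Step 1 (reduction to a countable class).} Under \Cref{as:f-S}, dominated convergence and the Kolmogorov SLLN give $|E^\nu f(0)-Ef(0)|\to0$ a.s. Also, $E^\nu f$ is $\Ex^\nu L$-Lipschitz on $X$ and $Ef$ is $\Ex L$-Lipschitz on $X$. Hence $h^\nu=(E^\nu f-Ef)|_X$ is continuous on $X$, and by density of $\{q_k\}$,
\[
\glip_X h^\nu=\nsup_{j\neq k,\ q_j\neq q_k}\bigl|(\Ex^\nu-\Ex)\,g_{jk}\bigr|,\qquad g_{jk}(\xi)=\frac{f(\xi,q_j)-f(\xi,q_k)}{\|q_j-q_k\|}.
\]
Each $g_{jk}$ is $\mathcal{A}$-measurable and satisfies $|g_{jk}|\le L$. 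It therefore suffices to show $\sup_{j,k}|(\Ex^\nu-\Ex)g_{jk}|\to0$ a.s.

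\emph{Step 2 (localization to the pieces).} Disjointify the cover by setting $\Xi'_\ell=\Xi_\ell\setminus\bigcup_{m<\ell,\,m\in I}\Xi_m$. These sets are universally measurable, hence measurable for the completion of $\mathcal{A}$ under $P$; this suffices because $\mathbb{P}$ is complete. Fix $N$ and split $g_{jk}=\sum_{\ell\le N}g_{jk}\mathbf{1}_{\Xi'_\ell}+g_{jk}\mathbf{1}_{T_N}$, where $T_N=\bigcup_{\ell>N}\Xi'_\ell$. The tail term is bounded uniformly in $(j,k)$ by $\Ex^\nu[L\mathbf{1}_{T_N}]+\Ex[L\mathbf{1}_{T_N}]$. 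By the scalar SLLN this converges a.s. to $2\Ex[L\mathbf{1}_{T_N}]$, which tends to $0$ as $N\to\infty$. So it suffices to prove, for each fixed $\ell$, that the countable class $\mathcal{G}_\ell=\{g_{jk}\mathbf{1}_{\Xi'_\ell}\}$ is $P$-Glivenko--Cantelli. Since $\mathcal{G}_\ell$ has the integrable envelope $L$ and is countable (so no measurability issues arise for the supremum), the classical result that VC-subgraph classes with integrable envelope are Glivenko--Cantelli (e.g.\ \cite[Section~2.6]{vanderVaartWellner.23}) reduces the task to showing that $\mathcal{G}_\ell$ is a VC-subgraph class.

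\emph{Step 3 (VC from NIP).} For $\xi\in\Xi'_\ell$, pick a parameter $z(\xi)$ as in \eqref{eq:def-q}. No measurability of $\xi\mapsto z(\xi)$ is needed, since \Cref{lem:vc-rules}(c) is purely combinatorial. In $\mathcal{M}_\ell$, consider the formula
\[
\psi(z,t;\,a,b,c)\;=\;\exists r_1\,\exists r_2\;\bigl(\phi_\ell(a,r_1;z)\wedge\phi_\ell(b,r_2;z)\wedge t\cdot c<r_1-r_2\bigr),
\]
with object variables $(z,t)$ and parameters $(a,b,c)\in\reals^d\times\reals^d\times\reals$. By \eqref{eq:def-q}, for $\xi\in\Xi'_\ell$ we have $t<g_{jk}(\xi)$ if and only if $\psi(z(\xi),t;q_j,q_k,\|q_j-q_k\|)$ holds. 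The norm enters only through the real parameter $c$. Thus the subgraph of $g_{jk}\mathbf{1}_{\Xi'_\ell}$ is the union of two sets. The first is $G^{-1}(\psi(\cdot;q_j,q_k,c))$ with $G(\xi,t)=(z(\xi),t)$ on $\Xi'_\ell\times\reals$. The second is the fixed set $\{(\xi,t)\mid\xi\notin\Xi'_\ell,\ t<0\}$. Because $\mathcal{M}_\ell$ has NIP, \Cref{lem:nip-vc} makes $\{\psi(\cdot;a,b,c)\}$ a VC-class. \Cref{lem:vc-rules}(c) and (a), together with the observation that a single fixed set is a VC-class, then show that $\mathcal{G}_\ell$ is a VC-subgraph class. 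Combining Steps 1--3 and letting $N\to\infty$ along a countable sequence gives the theorem.

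The main obstacle I expect is the measure-theoretic bookkeeping in Step 2. One must apply the Glivenko--Cantelli theorem to functions that are only measurable with respect to the completion, via the universally measurable $\Xi'_\ell$, and check that the a.s.\ statements survive this. Completeness of $\mathbb{P}$ and countability of the classes should make this routine. A secondary point to verify is that the constructed $\psi$, with the domain restriction to $\Xi'_\ell$, genuinely yields the subgraphs, including the degenerate pairs with $q_j=q_k$, which I exclude.
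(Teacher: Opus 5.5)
Your proposal is correct and follows essentially the same route as the paper: you reduce $\glip_X(E^\nu f-Ef)$ to a countable class of difference quotients on the dense set, prove each piece is VC-subgraph via the same existential formula and \Cref{lem:vc-rules}, obtain Glivenko--Cantelli on each piece, and control the tail by $\Ex^\nu[L\mathbf{1}_{T_N}]+\Ex[L\mathbf{1}_{T_N}]$. The only difference is measure-theoretic bookkeeping. The paper replaces the universally measurable $\Xi_\ell$ by $\mathcal{A}$-measurable inner sets $C_\ell\subset\Xi_\ell$, disjointified into $B_\ell$ and covering $\Xi$ up to a $P$-null set; you instead work with the disjointified $\Xi'_\ell$ directly on the $P$-completion of $\mathcal{A}$, which is equally valid because $\mathbb{P}$ is complete and all the classes involved are countable.
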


Unlike in \Cref{thm:countable}, the mapping $\xi\mapsto f(\xi,\cdot)|_X$ need not take values in a separable Banach space. Instead, we reduce convergence in the Lipschitz pseudometric to the Glivenko--Cantelli property of a class of difference quotients indexed by $\{q_k\}_{k \in \nats}$. The cover $\{\Xi_\ell\}_{\ell\in I}$ in \Cref{as:definable} induces a countable partition of $\Xi$. On each piece, the restrictions of the slices $\{f(\xi,\cdot)\mid \xi\in\Xi_\ell\}$ to $\{q_k\}_{k\in\nats}$ are uniformly definable in an NIP structure; consequently, the corresponding collection of difference quotients forms a VC-subgraph class. The desired convergence then follows by combining the corresponding estimates across the partition. Notably, the norm $\|\cdot\|$ used to define the pseudometric $d_X$ need not be definable; in particular, our analysis is not restricted to the Euclidean norm. We defer the proof of \Cref{thm:lln} to \Cref{sec:prf-thm:lln}.

Under stronger integrability assumptions and the additional assumption that $|I|<\infty$, we obtain the following convergence rate.

\begin{theorem}[rate of convergence]\label{thm:lip-rate}
Suppose that $(\Xi,\mathcal{A})$, $f:\Xi\times\reals^d\to\reals$, and $X\subset\reals^d$ satisfy Assumptions~\ref{as:f-S} and \ref{as:definable} with $|I| < \infty$. 
Let $\bm{\xi}, \bm{\xi}^1,\bm{\xi}^2,\ldots$ be $\Xi$-valued iid random variables on the complete probability space $(\Omega,\mathcal{F},\mathbb{P})$.
If, in addition, $
\Ex[|f(\bm{\xi},0)|^2]<\infty$ and $\Ex[L^2]<\infty$, then
\[
d_X(E^\nu f,Ef)=O_{\mathbb{P}}(\nu^{-1/2}).
\]
In other words, for any $\delta > 0$, there exist a constant $C_\delta < \infty$ and $\nu_0 \in \nats$ such that for all $\nu \geq \nu_0$, with probability at least $1-\delta$,  one has 
$d_X(E^\nu f,Ef) \leq C_\delta \nu^{-1/2}.$
\end{theorem}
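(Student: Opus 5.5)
We reduce $d_X(E^\nu f,Ef)$ to the supremum of an empirical process indexed by difference quotients, as in the proof of \Cref{thm:lln}, and then bound its expectation by a uniform entropy argument for VC-subgraph classes with a square-integrable envelope. Write
\[
d_X(E^\nu f,Ef)\le |E^\nu f(0)-Ef(0)| + \nsup_{k\neq j}\bigl|(\Ex^\nu-\Ex)\,g_{kj}\bigr|,\qquad g_{kj}(\xi)=\frac{f(\xi,q_k)-f(\xi,q_j)}{\|q_k-q_j\|}.
\]
Here the supremum over $x\neq y\in X$ has been replaced by the supremum over distinct pairs from the dense set $\{q_k\}$. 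This replacement is justified because $E^\nu f-Ef$ is Lipschitz on $X$ ($\mathbb{P}$-a.s., using \Cref{as:f-S} and dominated convergence for $Ef$), so difference quotients extend by continuity from the dense set. Since $\Ex[|f(\bm\xi,0)|^2]<\infty$, the first term is $O_{\mathbb{P}}(\nu^{-1/2})$ by Chebyshev. Each $|g_{kj}|\le L$, so $L$ is an envelope with $\Ex L^2<\infty$. The supremum over a countable class is measurable.

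Next we split along the finite cover. Replace $\{\Xi_\ell\}_{\ell\in I}$ by a disjoint partition $\Xi'_\ell=\Xi_\ell\setminus\bigcup_{m<\ell}\Xi_m$, which is still universally measurable. Then $g_{kj}=\sum_{\ell\in I} g_{kj}\mathbf{1}_{\Xi'_\ell}$, and
\[
\nsup_{k\ne j}|(\Ex^\nu-\Ex)g_{kj}|\le \nsum_{\ell\in I}\nsup_{k\ne j}|(\Ex^\nu-\Ex)(g_{kj}\mathbf{1}_{\Xi'_\ell})|.
\]
Since $|I|<\infty$, it suffices to show that each summand is $O_{\mathbb{P}}(\nu^{-1/2})$. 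This is the only place where finiteness of $I$ is used. With countably many pieces, the constants would not sum.

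For fixed $\ell$, we would show, as in the proof of \Cref{thm:lln}, that $\mathcal{G}_\ell=\{g_{kj}\mathbf{1}_{\Xi'_\ell}\}_{k\ne j}$ is a VC-subgraph class. The subgraph condition $r<g_{kj}(\xi)$ for $\xi\in\Xi'_\ell$ reads $\exists r_1,r_2:\ \phi_\ell(q_k,r_1;z)\wedge\phi_\ell(q_j,r_2;z)\wedge r\,\|q_k-q_j\|<r_1-r_2$, where $z=z(\xi)$. Treat $(z,r)$ as the object variables and $(q_k,q_j,\|q_k-q_j\|)$ as parameters; the norm value enters only as a real parameter, so the norm itself need not be definable. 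By \Cref{lem:nip-vc}, this family is VC. Pulling back through $\xi\mapsto(z(\xi),r)$ preserves this by \Cref{lem:vc-rules}(c), and adjoining the part outside $\Xi'_\ell$ preserves it by \Cref{lem:vc-rules}(a),(b).

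Uniform entropy bounds for VC-subgraph classes then give $\sup_Q N(\epsilon\|L\|_{Q,2},\mathcal{G}_\ell,L_2(Q))\le (A/\epsilon)^{V}$. By the standard maximal inequality (e.g.\ van der Vaart--Wellner, Theorem~2.14.1), $\Ex^*\sqrt\nu\,\sup_{\mathcal{G}_\ell}|(\Ex^\nu-\Ex)g|\le C J(1,\mathcal{G}_\ell)\|L\|_{P,2}<\infty$. Markov's inequality then yields the $O_{\mathbb{P}}(\nu^{-1/2})$ bound, and summing over $\ell$ finishes the proof.

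The main obstacle is measurability: $\xi\mapsto z(\xi)$ need not be measurable, and $\Xi'_\ell$ is only universally measurable. To handle this, we would work on the completion of $\mathcal{A}$ under $P$ and rely on the countability of the index set $\{(k,j)\}$, which makes each $\sup$ a countable supremum of measurable functions. Because the entropy bound concerns only the traces of the class on finite samples, the VC property holds without any measurability of $z$. The maximal inequality is then applied for the countable (hence pointwise measurable) class.
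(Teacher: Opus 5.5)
Your proposal is correct, but it reaches the rate by a different route than the paper.

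\textbf{The paper's route.} It keeps all pieces together. It replaces each $\Xi_\ell$ by an $\mathcal{A}$-measurable inner set $C_\ell$, so that $P(A_T)=1$ and every function involved is $\mathcal{A}$-measurable. It then forms the single class $\mathcal{H}=\{f(\cdot,0)\}\cup\mathcal{G}_{A_T}$ and shows it is VC-subgraph via finite unions of the per-piece VC classes. Uniform entropy together with the square-integrable envelope $|f(\cdot,0)|+L$ makes $\mathcal{H}$ $P$-Donsker; this step requires the extra pointwise measurability of $\mathcal{H}_\delta$ and $\mathcal{H}_\infty^2$. The continuous mapping theorem then gives $\sqrt{\nu}\,d_X(E^\nu f,Ef)\to\sup_{h\in\mathcal{H}}|\mathbb{G}_P h|$ in distribution, and tightness of $\mathbb{G}_P$ yields $O_{\mathbb{P}}(1)$.

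\textbf{Your route.} You split $d_X$ by the triangle inequality into the value at $0$, handled by Chebyshev, and one empirical process per piece of a disjointified, universally measurable partition, handled on the $P$-completion. You bound each process by the uniform-entropy maximal inequality and then Markov.

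\textbf{What each buys.} Your argument is more elementary and nonasymptotic. It avoids weak convergence in $\ell^\infty(\mathcal{H})$, and it needs only $P$-measurability of the countable classes $\mathcal{G}_\ell$. It gives a bound valid for every $\nu$, so you may take $\nu_0=1$. It also shows that $C_\delta$ depends only on $\delta$, $|I|$, the VC indices of the pieces, $\|L\|_{P,2}$, and $\|f(\cdot,0)\|_{P,2}$, although the VC indices remain non-explicit for the same qualitative reason the paper notes. The paper's Donsker route gives strictly more: an actual limit law for $\sqrt{\nu}\,d_X(E^\nu f,Ef)$, not just tightness.
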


\Cref{thm:lip-rate} may be viewed as a uniform tightness result for the sequence $\{d_X(E^\nu f,Ef)\}_\nu$. Its proof is deferred to \Cref{sec:prf-thm:lip-rate}. Under the stated assumptions, we do not have uniform control of the constants $C_\delta$ because of the qualitative nature of \Cref{lem:nip-vc}. Nevertheless, $C_\delta$ are related to the dimension $d$ and the VC-dimensions of certain function classes. Explicit bounds may be obtainable in more structured settings; we do not pursue this direction here. We emphasize that the finite-pieces requirement in \Cref{thm:lip-rate} cannot be removed without loss of the rate of convergence. 

\begin{proposition}\label{prop:norate}
Let $\Xi=\nats$, $\mathcal{A}=2^\nats$, $d=1$, and $X=[0,1]$. There exists a function $f:\Xi\times\reals\to\reals$ satisfying Assumptions~\ref{as:f-S}, \ref{as:separable}, and \ref{as:definable}, such that $L(\xi)=1$ and $f(\xi,\cdot)$ is semialgebraic for every $\xi\in\Xi$, with the following property. For every sequence $r^\nu \in (0,1)\downarrow0$, there exists a law $P$ on $\Xi$ such that, for any iid sequence $\bm{\xi},\bm{\xi}^1,\bm{\xi}^2,\ldots$ with law $P$ on a complete probability space $(\Omega,\mathcal{F},\mathbb{P})$,
\[
d_X(E^\nu f,Ef)\neq O_{\mathbb{P}}(r^\nu).
\]
\end{proposition}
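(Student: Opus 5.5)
The plan is to build $f$ from integrated Rademacher functions, so that $\glip_X(E^\nu f-Ef)$ dominates an $\ell^1$-distance between the empirical and true laws on $\nats$, and then use the well-known fact that this $\ell^1$ distance can converge arbitrarily slowly. By contrast, slices with disjoint supports (e.g., disjoint tents) would not suffice. In that case $\glip$ essentially reduces to $\sup_k|\hat p^\nu_k-p_k|$, which is $O_{\mathbb P}(\nu^{-1/2})$ because singletons form a VC-class.

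\emph{Construction.} For $k\in\nats$, let $\rho_k(x)=\sgn(\sin(2^k\pi x))$ be the $k$-th Rademacher function on $[0,1]$. Set $f(k,x)=\int_0^{\min\{\max\{x,0\},1\}}\rho_k(t)\,\dd t$. Each slice is piecewise linear with finitely many pieces, hence semialgebraic, $1$-Lipschitz, and $f(k,0)=0$. So \Cref{as:f-S} holds with $L\equiv1$. \Cref{as:separable} holds because $\Xi$ is countable. \Cref{as:definable} holds with $I=\nats$, $\Xi_\ell=\{\ell\}$, $\mathcal M_\ell=\R-alg$ (NIP by \Cref{prop:om-NIP}), and $\phi_\ell$ the semialgebraic formula defining the graph of $f(\ell,\cdot)$, taken without parameters.

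\emph{Key inequality.} Write $p_k=P(\{k\})$ and $\hat p^\nu_k=\frac1\nu|\{i\le\nu\mid\bm\xi^i=k\}|$. Then $E^\nu f-Ef=\sum_k(\hat p^\nu_k-p_k)f(k,\cdot)$, and this series converges in $\Lip(X)$ since $\sum_k|\hat p_k^\nu-p_k|\le2$. Its a.e.\ derivative on $(0,1)$ is $\sum_k(\hat p^\nu_k-p_k)\rho_k$. Fix $K$. The functions $\rho_1,\dots,\rho_K$ are constant on dyadic intervals of length $2^{-K-1}$ and realize every sign pattern. Choose such an interval $J$ on which $\rho_k=\sgn(\hat p_k^\nu-p_k)$ for $k\le K$. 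The tail $\sum_{k>K}$ integrates to small oscillation over $J$, or alternatively one averages over sub-intervals of $J$, using that the $\rho_k$ with $k>K$ have mean zero on $J$. Hence the slope of $E^\nu f-Ef$ across $J$ is at least $\sum_{k\le K}|\hat p^\nu_k-p_k|$. Letting $K\to\infty$ gives
\[
d_X(E^\nu f,Ef)\ \ge\ \nsum_{k}|\hat p^\nu_k-p_k|=:\Delta^\nu .
\]

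\emph{Choice of $P$.} Given $r^\nu\downarrow0$, pick $m_j=2^{-j}$, so $\sum_j m_j=1$. Pick $\nu_1<\nu_2<\cdots$ increasing fast enough that $r^{\nu_j}\le m_j/j$ and $\nu_j^{-1/2}\le m_j/j$. Let block $j$ consist of $N_j\ge\nu_j$ fresh atoms, each of mass $m_j/N_j\le1/\nu_j$; the blocks are disjoint and exhaust $\nats$.

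\emph{Lower bound at $\nu=\nu_j$.} For an atom of mass $p\le1/\nu$, we have $\Ex|\hat p^\nu_k-p_k|\ge p\,(1-p)^\nu\ge p/4$, so $\Ex\Delta^{\nu_j}\ge m_j/4$. Moreover, $\Delta^\nu$ is a function of $\bm\xi^1,\dots,\bm\xi^\nu$ that changes by at most $2/\nu$ when one sample is changed. McDiarmid's inequality therefore gives $\mathbb P(\Delta^{\nu}\le\Ex\Delta^\nu-t)\le e^{-\nu t^2/2}$. Taking $t=m_j/8$ and using $\nu_j m_j^2\ge j^2$ yields $\mathbb P(\Delta^{\nu_j}\ge m_j/8)\to1$. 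On this event, $d_X(E^{\nu_j}f,Ef)/r^{\nu_j}\ge j/8\to\infty$. Hence $\{d_X(E^\nu f,Ef)/r^\nu\}_\nu$ is not bounded in probability.

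The main obstacle is making the key inequality rigorous, namely the handling of the infinite tail $k>K$ when extracting a slope from a dyadic interval. I would handle it via the averaging argument: for a dyadic interval $J$ of level $K+1$, $\int_J\rho_k=0$ for $k>K$. Consequently the difference quotient of $E^\nu f-Ef$ between the endpoints of $J$ equals exactly $\sum_{k\le K}(\hat p^\nu_k-p_k)\rho_k|_J$, which gives the bound without any tail estimate.
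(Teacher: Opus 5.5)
Your proposal is correct, and its deterministic half matches the paper's. Since $\sgn(\sin(2^k\pi t))=(-1)^{\lfloor 2^k t\rfloor}$ for a.e.\ $t$, your slices are exactly the triangular waves $g_k$ used there, and the verification of the assumptions is the same. The paper proves the exact identity $\|\sum_k b_k g_k|_X\|_{\lip}=\sum_k|b_k|$ in two steps. It computes $\lip$ of a $K$-term partial sum at a point where $g_1,\dots,g_K$ are locally affine with slopes $\sgn(b_k)$. It then passes to the infinite series through completeness of $\Lip(X)$. You prove only ``$\geq$'', but you do so directly for the infinite series via the mean-zero property of the high-frequency terms, which removes the limiting step.

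There is one indexing slip. On a dyadic interval $J$ of length $2^{-K-1}$ the function $\rho_{K+1}$ is constant, so $\int_J\rho_{K+1}\neq0$. Take $J$ of length $2^{-K}$ instead. Then $\rho_1,\dots,\rho_K$ are constant on $J$, and each sign pattern occurs on exactly one such $J$. Moreover, $\int_J\rho_k=0$ for every $k>K$, because $J$ is a union of an even number of consecutive level-$k$ intervals on which $\rho_k$ alternates.

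The probabilistic half is where the routes genuinely differ. The paper bounds $\sum_k|\hat p^\nu_k-p_k|$ below by the missing mass $\bm U^\nu$. It then imports from Berend and Kontorovich two facts: a law with $\Ex[\bm U^\nu]>(\sqrt{r^\nu}+\nu^{-1/4})/2$ for every $\nu$, and a sub-Gaussian deviation bound for $\bm U^\nu$.

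You proceed differently:
\begin{itemize}
\item You build $P$ by hand from blocks of many light atoms.
\item You use the missing-mass idea only in expectation, through $\Ex|\hat p^\nu_k-p_k|\geq p_k(1-p_k)^\nu$.
\item You obtain concentration from McDiarmid's inequality applied to $\Delta^\nu$ itself, with bounded differences $2/\nu$.
\item You conclude only along the subsequence $\nu_j$, which already suffices to destroy boundedness in probability.
\end{itemize}

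Your argument is self-contained and elementary. The paper's is shorter given the citation, and it gives more. It yields the exact isometry $d_X(E^\nu f,Ef)=\sum_k|\hat p^\nu_k-p_k|$. It also yields a lower bound $d_X(E^\nu f,Ef)\geq(\sqrt{r^\nu}+\nu^{-1/4})/4$ holding with probability tending to one along the full sequence, so that $d_X(E^\nu f,Ef)/r^\nu\to\infty$ in probability.
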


The proof of \Cref{prop:norate} constructs a semialgebraic basis that relates the Lipschitz pseudometric to the \emph{missing mass problem} studied in \cite{berend2012missing}, whose convergence can be arbitrarily slow; see \Cref{sec:prf:prop:norate} for details.
Since $L(\xi)\equiv 1$, the square-integrability condition in \Cref{thm:lip-rate} holds trivially. Nevertheless, the convergence in \Cref{thm:countable,thm:lln} can be arbitrarily slow. By contrast, for sample averages of $\reals^m$-valued random variables, finite second moments yield a central limit theorem and hence the canonical rate $O_{\mathbb{P}}(\nu^{-1/2})$. This contrast highlights the fundamental difference between averaging random vectors in a Euclidean space and averaging random functions in the Lipschitz space.

\subsection{Proofs}\label{sec:prf:sec:lln}
We begin with some preparatory moves. 
Let $(\Xi,\mathcal{A})$ be a measurable space. Suppose that the function
$f:\Xi\times\reals^d\to\reals$ and the set $X\subset\reals^d$ satisfy  \Cref{as:f-S}. Let $D = \{q_k\}_{k \in \nats}$ be a countable dense subset of $X$.
If $X$ is a singleton, then necessarily $X=\{0\}$ and $d_X(E^\nu f,Ef)=|E^\nu f(0)-Ef(0)|$ and the results in \Cref{sec:lslln} reduce directly to the classical SLLN. Hence, in what follows, we assume that $X$ contains at least two distinct points.
For every $x\in X$,  one has $
|Ef(x)| \leq 
\Ex[|f(\bm{\xi},x)|]
\le \Ex[|f(\bm{\xi},0)|]+(\Ex L)\|x\|.$
 Since $\Ex[|f(\bm{\xi},0)|] < \infty$ and $\Ex L < \infty$, the function $Ef:\reals^d \to \Reals$ is real-valued on $X$.
For any $x,y\in X,x\neq y$, let a function $g_{x,y}:\Xi \to \reals$ be
\[
g_{x,y}(\xi) = \frac{f(\xi, x) - f(\xi, y)}{\|x-y\|}.
\]
For any $\mathcal{A}$-measurable set $S \subset \Xi$, define $
\mathcal{G}_S= \{g_{x,y}\bm{1}_S \mid x,y \in D,x\neq y\}$. Observe the  identity
\[
\glip_X (E^\nu f-Ef)  = 
\sup_{x,y\in X, x \neq y} \frac{|(E^\nu f(x)-Ef(x))- (E^\nu f(y)- E f(y))|}{\|x - y\|}
= \sup_{g \in \mathcal{G}_\Xi} |\Ex^\nu g -  \Ex g|. 
\]
To see the last equality, fix $x,y \in X, x \neq y$. Let $x_k, y_k \in D$ be such that $(x_k, y_k) \to (x, y)$. Since $x\neq y$, for all sufficiently large $k \in \nats$, one has $x_k \neq y_k$. Then, for any $\xi \in \Xi$, one has $f(\xi, x_k) \to f(\xi, x)$, $f(\xi, y_k) \to f(\xi, y)$, and $\|x_k - y_k\| \to \|x - y\|>0$, so that we get $g_{x_k,y_k} \to g_{x,y}$ pointwise and the claim follows by the dominated convergence theorem. 
Thus, the convergence of $\glip_X (E^\nu f-Ef)$ is governed by the empirical process indexed by $\mathcal{G}_\Xi$.
Let $P=\mathbb{P}\circ \bm{\xi}^{-1}$ be the law of $\bm{\xi}$. Throughout the proofs, we work on appropriate probability-one events on which all relevant almost sure limits hold. Such events exist because only countably many almost sure statements are used and $\mathcal{F}$ is $\mathbb{P}$-complete; for notational simplicity, we do not name them explicitly.

\subsubsection{Proof of \Cref{thm:countable}}\label{sec:prf-countable}

Let $B$ be the $\|\cdot\|_{\lip}$-closure of the separable subspace in \Cref{as:separable}, hence $(B, \|\cdot\|_{\lip})$ is a separable Banach space.  Let $F:\Xi \to B$ be the mapping $\xi \mapsto f(\xi, \cdot)|_X\in B$. We will view $F(\bm{\xi}):\Omega \to B$ as a $B$-valued random variable. To this end, it suffices to show that $F$ is
$\mathcal{A}/\mathcal{B}(B)$-measurable. 
By restricting the defining supremum to the countable set $D$, one sees that the function $
\xi\mapsto \glip(f(\xi,\cdot)|_X-b)$
is $\mathcal A$-measurable for any fixed and deterministic $b\in B \subset \Lip(X)$. Since
$\xi\mapsto |f(\xi,0)-b(0)|$ is $\mathcal{A}$-measurable, the function
$\xi\mapsto \|f(\xi,\cdot)|_X-b\|_{\lip}$ is $\mathcal{A}$-measurable.
Let $\{b_k\}_{k \in \nats}$ be a countable dense subset of $B$. For each $k \in \nats$ and any $r \in \mathbb{Q}$, we have
\[
F^{-1}(\{b \in B \mid \| b - b_k \|_{\lip} < r\}) = \{\xi \in \Xi \mid  \| f(\xi, \cdot)|_X - b_k \|_{\lip} < r\} \in \mathcal{A},
\]
so that $F$ is
$\mathcal{A}/\mathcal{B}(B)$-measurable.
Since $B$ is separable, by \cite[Lemma 11.37]{aliprantis2006infinite},   $F(\bm{\xi}), F(\bm{\xi}^1), F(\bm{\xi}^2), \ldots$ are strongly measurable $B$-valued iid random variables on
$(\Omega,\mathcal F,\mathbb P)$. 
For any $\xi \in \Xi$, we have
\[
 \|F(\xi)\|_{\lip} = \max\{|f(\xi, 0)|, \glip(f(\xi, \cdot)|_X)\} \leq |f(\xi, 0)| + L(\xi).
\]
Therefore, $\Ex [\|F(\bm{\xi})\|_{\lip}] \leq \Ex[|f(\bm{\xi}, 0)|] + \Ex[L(\bm{\xi})] < \infty$, so that $F(\bm{\xi})$ is Bochner integrable from \cite[Theorem 11.44]{aliprantis2006infinite}; see also \cite[p.~42]{ledoux1991probability}.
Using \cite[Corollary 7.10]{ledoux1991probability}, one has 
\[
\Big\| \tfrac{1}{\nu} \nsum_{i=1}^\nu F(\bm{\xi}^i) - \Ex[F(\bm{\xi})]\Big\|_{\lip} \to 0,\qquad \mathbb{P}\text{-a.s.},
\]
where $\Ex[F(\bm{\xi})]\in B$ denotes the Bochner integral of $F(\bm{\xi})$.
Now, we relate the above convergence to that of $E^\nu f$ and $Ef$. For any $x \in X$ and $\nu \in \nats$, we have
\[
\Big(\tfrac{1}{\nu} \nsum_{i=1}^\nu F(\bm{\xi}^i)\Big)(x) = \tfrac{1}{\nu} \nsum_{i=1}^\nu f(\bm{\xi}^i, x) = E^\nu f(x).
\]
For each $x\in X$, define the evaluation map $\eval_x:B \to \reals$ by $\eval_x(b) = b(x)$ for $b \in B$. This map is linear. Meanwhile, for any $b \in B$, one has
\[
|\eval_x(b)|\leq |b(0)| + \glip(b)\|x\| \leq (1+\|x\|)\|b\|_{\lip},
\]
hence the linear map $\eval_x$ is bounded. From \cite[Lemma 11.45]{aliprantis2006infinite}, for any $x \in X$, 
\[
\Ex[F(\bm{\xi})](x) = \eval_x (\Ex[F(\bm{\xi})]) = 
\Ex[\eval_x (F(\bm{\xi}))]
= \Ex[f(\bm{\xi},x)] = Ef(x).
\]
Therefore, as $\nu \to \infty$, we obtain
\[
d_X(E^\nu f, Ef) = \|(E^\nu f - Ef)|_X \|_{\lip} =  \Big\| \tfrac{1}{\nu} \nsum_{i=1}^\nu F(\bm{\xi}^i) - \Ex[F(\bm{\xi})]\Big\|_{\lip} \to 0,\qquad \mathbb{P}\text{-a.s.}
\]

\subsubsection{Proof of \Cref{thm:lln}}\label{sec:prf-thm:lln}
 
Let $\{\Xi_\ell\}_{\ell\in I}$ be the cover of $\Xi$ in \Cref{as:definable}. After reindexing and setting $\Xi_\ell=\emptyset$ for unused indices, we may assume without loss of generality that $I=\nats$.
Due to universal measurability, each $\Xi_\ell$ is measurable on the $P$-completion $\mathcal{A}^P$ of $\mathcal{A}$. For any $\ell \in \nats$, there exist $C_\ell \subset \Xi_\ell \subset C_\ell'$ such that $C_\ell, C_\ell' \in \mathcal{A}$ and $P(C_\ell' \setminus C_\ell) = 0$; see \cite[Section 1.5]{cohn2013measure}. Let $B_\ell=C_\ell\setminus(\cup_{k < \ell} C_k)$ and $A_\ell = \cup_{k \in [\ell]} C_k = \sqcup_{k \in [\ell]} B_k$, so that $\{A_\ell\}_\ell$ and $\{B_\ell\}_\ell$ are both $\mathcal{A}$-measurable. One has $P(\cup_\ell C_\ell)=1$, since
\[
1=P(\Xi)=P(\cup_\ell \Xi_\ell)\leq P(\cup_\ell C_\ell') \leq P(\cup_\ell C_\ell) + P(\cup_\ell (C_\ell'\setminus C_\ell))=P(\cup_\ell C_\ell).
\]
Therefore, $P(A_\ell) \uparrow 1.$  
A key ingredient of our proof is the following VC-subgraph lemma.
\begin{lemma}\label{lem:Gm-VC}
	For each $\ell \in \nats$, the family $\mathcal{G}_{B_\ell}$ is a VC-subgraph class.
\end{lemma}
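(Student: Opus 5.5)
Fix $\ell\in\nats$. We need to show that the collection of subgraphs
\[
\mathcal{S}=\bigl\{\{(\xi,s)\in\Xi\times\reals \mid s< g_{x,y}(\xi)\bm{1}_{B_\ell}(\xi)\}\bigm| x,y\in D,\ x\neq y\bigr\}
\]
is a VC-class. We split each such set into the part over $B_\ell$ and the part over $\Xi\setminus B_\ell$. On $\Xi\setminus B_\ell$ the function is $0$, so that part is the fixed set $(\Xi\setminus B_\ell)\times(-\infty,0)$ for every index. A single fixed set forms a VC-class. By \Cref{lem:vc-rules}(a), it therefore suffices to show that
\[
\mathcal{S}'=\bigl\{\{(\xi,s)\mid \xi\in B_\ell,\ s<g_{x,y}(\xi)\}\bigm| x,y\in D, x\neq y\bigr\}
\]
is a VC-class.

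\textbf{Pulling back to a definable family.} Since $B_\ell\subset C_\ell\subset \Xi_\ell$, every $\xi\in B_\ell$ admits a parameter $z(\xi)\in\reals^{k_\ell}$ satisfying \eqref{eq:def-q}; we choose one for each $\xi$ using the axiom of choice. No measurability is needed, because VC is purely combinatorial. Define $G:B_\ell\times\reals\to\reals^{k_\ell}\times\reals$ by $G(\xi,s)=(z(\xi),s)$. The next step is to express each set in $\mathcal{S}'$ as $G^{-1}$ of a set from one family of $\mathcal{M}_\ell$-definable sets, uniformly parametrized by $(x,y)$. For $x,y\in D$ with $x\neq y$, set $c=\|x-y\|>0$, which is just a real number; the norm itself never needs to be definable. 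Consider the formula
\[
\psi(z,s;x,y,c)\;=\;\exists r_1\,\exists r_2\,\bigl(\phi_\ell(x,r_1;z)\wedge\phi_\ell(y,r_2;z)\wedge c\cdot s< r_1-r_2\bigr).
\]
It is written in the language of $\mathcal{M}_\ell$, with object variables $(z,s)$ and parameter variables $(x,y,c)\in\reals^{2d+1}$. By \eqref{eq:def-q}, for $\xi\in B_\ell$ the only $r_1$ satisfying $\phi_\ell(x,r_1;z(\xi))$ is $f(\xi,x)$, and similarly for $r_2$. This holds precisely because $x,y\in D=\{q_k\}$, which is why the assumption is stated only at the dense points. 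Since $c>0$, we get
\[
\mathcal{M}_\ell\models\psi(z(\xi),s;x,y,c)\iff s<g_{x,y}(\xi).
\]
Hence each set in $\mathcal{S}'$ equals $G^{-1}(\psi(\reals^{k_\ell+1};x,y,c))$.

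\textbf{Concluding.} Because $\mathcal{M}_\ell$ has NIP, the formula $\psi$ has NIP. By \Cref{lem:nip-vc}, the family $\{\psi(\reals^{k_\ell+1};b)\mid b\in\reals^{2d+1}\}$ is a VC-class. Any subfamily of a VC-class is a VC-class, so restricting $b$ to the tuples $(x,y,\|x-y\|)$ with $x,y\in D$ distinct is harmless. By \Cref{lem:vc-rules}(c), the preimages under $G$ form a VC-class of subsets of $B_\ell\times\reals$. These sets are also subsets of $\Xi\times\reals$, and the shattering notion does not depend on which ambient set we regard them in, so $\mathcal{S}'$ is a VC-class.

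The main point needing care is the well-definedness step: the formula $\phi_\ell$ must pin down the values $f(\xi,x)$ uniquely for the chosen parameter. This is exactly what restricting the index set to $D$ together with \eqref{eq:def-q} guarantees. A second point is that the nondefinable norm only ever enters through the scalar parameter $c$, which keeps $\psi$ inside the language of $\mathcal{M}_\ell$.
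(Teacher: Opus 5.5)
Your proof is correct and is essentially the paper's argument. The paper uses the same existential formula (there called $\eta_\ell(z,t;u,v,a)$), applies \Cref{lem:nip-vc}, restricts to the subfamily with $u,v\in D$ and $a=\|u-v\|$, pulls back along $(\xi,t)\mapsto(z_{\ell,\xi},t)$ via \Cref{lem:vc-rules}, and adjoins the fixed set $B_\ell^{\compl}\times(-\infty,0)$; your remarks about the ambient space and about the norm entering only through a scalar parameter make explicit points the paper leaves implicit.
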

\begin{proof}
	For any $x,y \in \reals^d$, let
\[
S_{x,y}^\ell = \{(\xi, t) \in B_\ell \times \reals \mid t\|x - y\| < f(\xi, x) - f(\xi, y)\}. 
\]
 Define the following formula in the language of $\mathcal{M}_\ell$: 
\[
\eta_\ell(z,t;u,v,a)
=
(\exists r_1,r_2)\quad 
\phi_\ell(u,r_1;z)\wedge \phi_\ell(v,r_2;z)
\wedge t a<r_1-r_2
\]
with $r_1,r_2 \in \reals, z \in \reals^{k_\ell}, t \in \reals, u \in \reals^d, v \in \reals^d, a \in \reals$. Since $\mathcal{M}_\ell$ has NIP, every formula in its language has NIP, so that $\eta_\ell$ has NIP.
	Using \Cref{lem:nip-vc}, we know that the family
	\[
	\Big\{\{(z,t) \in \reals^{k_\ell+1} \mid \mathcal{M}_\ell \models \eta_\ell(z, t; x, y, a)\} \Bigm| x,y \in \reals^d, a \in \reals\Big\} 
	\]
	is a VC-class. Hence, its subfamily
	\[
	\Big\{Z_{x,y}^\ell=\{(z,t) \in \reals^{k_\ell+1} \mid \mathcal{M}_\ell \models \eta_\ell(z, t; x, y, \|x-y\|)\} \Bigm| x,y \in D, x\neq y\Big\}
	\]
	is also a VC-class. 
	For each $\ell \in \nats$, consider the following mapping
	\[
		\Psi_\ell:B_\ell \times \reals \longrightarrow \reals^{k_\ell} \times \reals,\qquad
		(\xi, t)  \longmapsto (z_{\ell,\xi}, t).
	\]
	For any $x,y \in D,x\neq y$, by \Cref{as:definable} and the inclusions $B_\ell\subset  C_\ell\subset \Xi_\ell$, we observe
	\[
	\begin{aligned}
	S_{x,y}^\ell &= \{(\xi, t) \in B_\ell\times \reals \mid  r_1=f(\xi,x),r_2=f(\xi,y),a=\|x - y\|, t a < r_1 - r_2 \}  \\
	&= \{(\xi, t) \in B_\ell\times \reals \mid \mathcal{M}_\ell\models \phi_\ell(x,r_1; z_{\ell,\xi})\wedge \phi_\ell(y,r_2; z_{\ell,\xi}),a=\|x - y\|, t a < r_1 - r_2 \} \\
	&= \{(\xi, t) \in B_\ell\times \reals \mid \mathcal{M}_\ell \models \eta_\ell(z_{\ell,\xi}, t; x,y, \|x-y\|)\} = \Psi^{-1}_\ell(Z^\ell_{x,y}).
	\end{aligned}
	\]
	By \Cref{lem:vc-rules}, the family $\{S^\ell_{x,y} \mid x,y \in D, x \neq y\}$ is a VC-class. 
	Note that
		\[
\Big\{\{(\xi, t) \mid t < g_{x,y}(\xi)\bm{1}_{B_\ell}(\xi)\} \Bigm| x, y \in D, x \neq y \Big\} = \Big\{S^\ell_{x,y} \cup \big(B_\ell^\compl \times (-\infty, 0)\big) \Bigm| x,y \in D, x \neq y\Big\}.
\]
Since a class containing only one set is trivially a VC-class, by \Cref{lem:vc-rules}, the above collection  is a VC-class. Therefore, the collection $\mathcal{G}_{B_\ell}$ is a VC-subgraph class. 
\end{proof}

We are now ready to prove \Cref{thm:lln}, following an argument similar to that of \cite[Theorem 4]{van2000preservation}.
For any distinct $x,y \in D$, the function $g_{x,y}$ is $\mathcal{A}$-measurable, since $f(\cdot,z)$ is $\mathcal{A}$-measurable for every $z \in \reals^d$. For any $\mathcal{A}$-measurable $S$, the function $g_{x,y}\mathbf{1}_S$ is $\mathcal{A}$-measurable as well.
Thus, by countability of $D$, the families $\mathcal{G}_{A_\ell}$ and $\mathcal{G}_{B_\ell}$ are both pointwise measurable \cite[Example 2.3.4]{vanderVaartWellner.23}, hence $P$-measurable in the sense of \cite[Definition 2.3.3]{vanderVaartWellner.23}.
Since $f(\xi, \cdot)$ is $L(\xi)$-Lipschitz on $X$, one has $|g_{x,y}(\xi)\bm{1}_S(\xi)| \leq |g_{x,y}(\xi)| \leq L(\xi)$ for any $x ,y \in X,x\neq y$. By $\Ex L < \infty$, $\mathcal{G}_{B_\ell}$ has an integrable envelope $L$.  From \Cref{lem:Gm-VC}, the family $\mathcal{G}_{B_\ell}$ is a VC-subgraph class. 
	By the control of uniform entropy from \cite[Theorem 2.6.7]{vanderVaartWellner.23} and then invoking \cite[Theorem 2.8.1]{vanderVaartWellner.23}, the class $\mathcal{G}_{B_\ell}$ is $P$-Glivenko--Cantelli. Hence, for each $k \in \nats$,
	\[
	\lim_{\nu \to \infty} \sup_{g \in \mathcal{G}_{B_k}} |\Ex^\nu g -  \Ex g| = 0,\qquad \mathbb{P}\text{-a.s.}
	\]
	Since $\bm{1}_{A_\ell} = \sum_{k=1}^\ell \bm{1}_{B_k}$, for any $\ell \in \nats$, $\mathbb{P}$-a.s., one has
\[
\limsup_{\nu \to \infty} \sup_{g \in \mathcal{G}_{A_\ell}} |\Ex^\nu g -  \Ex g| \leq \nsum_{k=1}^\ell \limsup_{\nu \to \infty} \sup_{g \in \mathcal{G}_{B_k}} |\Ex^\nu g -  \Ex g| = 0.
\] 
For any $\ell \in \nats$ and $g \in \mathcal{G}_\Xi$,  observe that
\[
\begin{aligned}
|\Ex^\nu g -  \Ex g|
&\leq |\Ex^\nu [g\bm{1}_{A_\ell}] -  \Ex [g\bm{1}_{A_\ell}]| + |\Ex^\nu [g\bm{1}_{A_\ell^\compl}]-\Ex [g\bm{1}_{A_\ell^\compl}]| \\
&\leq |\Ex^\nu [g\bm{1}_{A_\ell}] -  \Ex [g\bm{1}_{A_\ell}]| + \Ex^\nu [L\bm{1}_{A_\ell^\compl}]+\Ex [L\bm{1}_{A_\ell^\compl}].
\end{aligned}
\]
Therefore, for any $\ell \in \nats$, \[
\glip_X (E^\nu f-Ef)  = \sup_{g \in \mathcal{G}_\Xi} |\Ex^\nu g -  \Ex g| 
\leq \sup_{g \in \mathcal{G}_{A_\ell}} |\Ex^\nu g -  \Ex g| + \Ex^\nu [L\bm{1}_{A_\ell^\compl}]+\Ex [L\bm{1}_{A_\ell^\compl}].
\]
Using $\Ex [L]<\infty$, by SLLN, we have $\Ex^\nu [L\bm{1}_{A_\ell^\compl}] \to \Ex [L\bm{1}_{A_\ell^\compl}]$, $\mathbb{P}$-a.s., as $\nu \to \infty$. 
Since $P(A_\ell^\compl)\downarrow 0$ and $\Ex [L]<\infty$, by dominated convergence theorem, we have $\Ex [L\bm{1}_{A_\ell^\compl}]\to 0$.
Then, $\mathbb{P}$-a.s., we have
\[
\limsup_{\nu \to \infty}\glip_X (E^\nu f-Ef)  \leq \limsup_{\ell \to \infty} 2\Ex [L\bm{1}_{A_\ell^\compl}] = 0.
\]
Since $\Ex[|f(\bm{\xi}, 0)|] < \infty$, by SLLN, one has  $E^\nu f(0) \to E f(0)$, $\mathbb{P}$-a.s. Therefore, we have
\[
d_X(E^\nu f, Ef) = \max\{|E^\nu f(0)- E f(0)|, \glip_X(E^\nu f-Ef)\} \to 0, \qquad \mathbb{P}\text{-a.s.}
\]

\subsubsection{Proof of \Cref{thm:lip-rate}}\label{sec:prf-thm:lip-rate}

Since $|I|<\infty$, after reindexing we may assume that $I=\nats$ and that $\Xi_\ell=\emptyset$ for all $\ell>T$, for some $T\in\nats$.
Let $\{A_\ell\}_{\ell \in \nats}$ and $\{B_\ell\}_{\ell \in \nats}$ be as defined in the proof of \Cref{thm:lln}. By $\cup_\ell \Xi_\ell = \Xi$, one has $P(A_{T}) = 1$.
Let $\mathcal{H}=\{f(\cdot, 0)\}\cup \mathcal{G}_{A_{T}}$.
Observe that $L$ is an integrable envelope of $\mathcal{G}_{A_{T}}$. Let $L_2(\xi)=|f(\xi, 0)|+L(\xi)$, so that $L_2$ is $\mathcal{A}$-measurable. By assumption, $
\Ex [L_2^2] \leq 2\Ex [|f(\bm{\xi}, 0)|^2]+2\Ex [L^2] <\infty$; hence $L_2$
is a square-integrable envelope of $\mathcal{H}$. As in the proof of \Cref{thm:lln}, the class $\mathcal{G}_{A_T}$ and also $\mathcal{H}$ are pointwise measurable.
Moreover, for every $\delta>0$, the classes
\[
\mathcal H_\delta
=
\{h_1-h_2 \mid h_1,h_2\in\mathcal{H},\ 
  \textstyle{\int} (h_1-h_2)^2 \dd P<\delta^2\},\quad 
  \mathcal{H}_\infty^2
=
\{(h_1-h_2)^2 \mid h_1,h_2\in\mathcal{H}\}
\]
are pointwise measurable, by dominated convergence theorem and  the square-integrable envelope $L_2$. Since $A_T = \sqcup_{\ell \in [T]}B_\ell$, the collection of subgraphs in $\mathcal{H}$ is contained in 
\[
 \mathcal{D}\cup \Big\{C_1 \cup \cdots \cup C_{T} \cup \big(A_{T}^\compl \times (-\infty, 0)\big) \Bigm| C_\ell \in \mathcal{C}_\ell, \ell \in [T]\Big\},
 \] 
 where
\[
\mathcal{D}=\Big\{\{(\xi, r) \mid r < f(\xi, 0)\}\Big\}, \qquad \mathcal{C}_\ell=\Big\{ \{(\xi, r) \in B_\ell \times \reals \mid r < g_{x,y}(\xi) \} \Bigm| x, y \in D, x \neq y\Big\}.
\]
For each $\ell\in[T]$, the class $\mathcal{C}_\ell$ is a VC-class by the proof of \Cref{lem:Gm-VC}, while $\mathcal{D}$
is a singleton class and hence is trivially a VC-class.
By \Cref{lem:vc-rules}, the class of the above finite unions is
a VC-class. Therefore, $\mathcal{H}$ is a VC-subgraph class.
  By the control of uniform entropy from \cite[Theorem 2.6.7]{vanderVaartWellner.23} and then invoking \cite[Theorem 2.5.2]{vanderVaartWellner.23}, the class $\mathcal{H}$ is $P$-Donsker. Thus the empirical process
\[
\sqrt{\nu}(\Ex^\nu h-\Ex h),
\qquad h\in\mathcal{H},
\]
converges weakly in $\ell^\infty(\mathcal{H})$ to a centered tight Gaussian process $\mathbb{G}_P$.
Since $P(A_{T}) = 1$, we have $\glip_X(E^\nu f-Ef)=\sup_{g \in \mathcal{G}_\Xi} |\Ex^\nu [g\bm{1}_{A_{T}}]-\Ex [g\bm{1}_{A_{T}}]|$, $\mathbb{P}$-a.s. Hence, one has
\[
\sqrt{\nu}\,d_X(E^\nu f,Ef)
=\nsup_{h\in\mathcal{H}}|\sqrt{\nu}(\Ex^\nu h-\Ex h)|,\qquad \mathbb{P}\text{-a.s.}
\]
The mapping $
z\mapsto \sup_{h\in\mathcal{H}}|z(h)|$
is the supremum norm on $\ell^\infty(\mathcal{H})$, hence is continuous. The continuous mapping theorem yields
\[
\sqrt{\nu}\,d_X(E^\nu f,Ef)
\to
\nsup_{h\in\mathcal{H}}|\mathbb{G}_P h|\qquad\text{in distribution.}
\]
The limit is finite almost surely because $\mathbb G_P$ is tight as an $\ell^\infty(\mathcal H)$-valued random element. Hence $
\sqrt{\nu}\,d_X(E^\nu f,Ef)=O_{\mathbb P}(1).$

\subsubsection{Proof of \Cref{prop:norate}}\label{sec:prf:prop:norate}
Let $X=[0,1], d= 1, \Xi = \nats, \mathcal{A}=2^\nats$, and fix any $r^\nu \in (0,1) \downarrow 0$.
For any $k \in \nats$, let $g_k: \reals \to \reals$ be the triangular-wave function:
\[
g_k(x)=\int_0^x \bm{1}_{[0,1]}(t)(-1)^{\lfloor 2^kt\rfloor} \dd t.
\]
It is easy to verify that $g_k(0) = 0$, $\glip_X g_k = 1$, and $g_k$ is semialgebraic. 
For any $K \in \nats$ and any $(b_1, b_2, \ldots) \in \reals^\nats$, there exists a point $x_K  \in [0,1]$ such that $g_k$ is locally affine at $x_K$ with $g_k'(x_K)=\sgn(b_k)$ for each $k \leq K$ such that $b_k \neq 0$. Hence, we obtain
\[
\nsum_{k=1}^K |b_k|=\nsum_{k=1}^K |b_k| \|g_k|_X\|_{\lip} \geq \Big\|\nsum_{k=1}^K b_k g_k|_X \Big\|_{\lip} 
\geq \lip \Big(\nsum_{k=1}^K b_k g_k|_X\Big)(x_K) = 
 \nsum_{k=1}^K |b_k|.
 \]
Hence $\|\nsum_{k=1}^K b_k g_k|_X \|_{\lip} = \nsum_{k=1}^K |b_k|$ for any $K \in \nats$. 
When $\sum_{k=1}^\infty |b_k| < \infty$, for any $K_1, K_2 \in \nats$, we have $\|\nsum_{k=K_1}^{K_2} b_k g_k |_X\|_{\lip} \leq \sum_{k=K_1}^{K_2} |b_k|$, so that the sequence $(\sum_{k=1}^K b_kg_k|_X)_K$ is Cauchy.
Since $\Lip(X)$ is complete, we obtain $\sum_{k=1}^\infty b_kg_k|_X \in \Lip(X)$ with 
\[
\Big\|\nsum_{k=1}^\infty b_k g_k|_X \Big\|_{\lip} 
=
\lim_{K \to \infty}  \Big\|\nsum_{k=1}^K b_k g_k|_X \Big\|_{\lip}
= 
 \nsum_{k=1}^\infty |b_k|.
\]

Let $\delta_k$ be the Dirac measure on $k$, $p_k \in [0,1]$ such that $\sum_{k=1}^\infty p_k = 1$, and, $P = \sum_{k=1}^\infty p_k\delta_k$.
Let  $\bm{\xi}, \bm{\xi}^1, \ldots$ be iid $\nats$-valued discrete random variables on $(\Omega,\mathcal{F},\mathbb{P})$ with the law $P$. For any $\nu \in \nats$, define the empirical estimation $\bm{p}^\nu_k = \tfrac{1}{\nu}\sum_{i=1}^\nu \bm{1}_{\{\bm{\xi}^i = k\}}$. Let $f:\nats \times \reals \to \reals$ be $f(\xi, x) = g_\xi (x)$. Then, Assumptions~\ref{as:f-S},~\ref{as:separable},~and~\ref{as:definable} hold.
Note that $\nsum_{k=1} ^\infty |\bm{p}^\nu_k - p_k|\leq 2$, so that
\[
d_X(E^\nu f, E f) = \Big\|\nsum_{k=1}^\infty (\bm{p}^\nu_k - p_k) g_k|_X \Big\|_{\lip} = \nsum_{k=1} ^\infty |\bm{p}^\nu_k - p_k| \geq \nsum_{k : \bm{p}^\nu_k = 0} |p_k|,
\]
where the right-hand side is the \emph{missing mass} as defined in \cite[(1)]{berend2012missing}. Let $\bm{U}^\nu=\nsum_{k : \bm{p}^\nu_k = 0} |p_k|$. Then, by \cite[(2) and Proposition 4]{berend2012missing}, there exists a specific law  $P$ on $\Xi$ such that 
\[
\Ex[\bm{U}^\nu] > (\sqrt{r^\nu} + \nu^{-1/4})/2,\qquad 
\mathbb{P}(|\bm{U}^\nu  - \Ex[\bm{U}^\nu]| \geq \epsilon) \leq 2\exp(-\nu\epsilon^2).
\] 
Let $\epsilon = (\sqrt{r^\nu} + \nu^{-1/4})/4$ and we compute
\[
\mathbb{P}\Big(\bm{U}^\nu \leq \tfrac{\sqrt{r^\nu} + \nu^{-1/4}}{4}\Big) \leq \mathbb{P}\Big(|\bm{U}^\nu  - \Ex[\bm{U}^\nu]| \geq \tfrac{\sqrt{r^\nu} + \nu^{-1/4}}{4}\Big) \leq 2\exp(-\sqrt{\nu}/16) \to 0,
\]
as $\nu \to \infty$.
Therefore, for any $M > 0$, one has $\sqrt{r^\nu} + \nu^{-1/4} > 4Mr^\nu$ eventually. Hence 
\[
\limsup_{\nu \to \infty} \mathbb{P}\Big(d_X(E^\nu f, E f) \leq  M r^\nu \Big) \leq \limsup_{\nu \to \infty}
 \mathbb{P}\Big(\bm{U}^\nu \leq  \tfrac{ \sqrt{r^\nu} + \nu^{-1/4} }{4}\Big) = 0,
\]
so that $d_X(E^\nu f, E f) \neq O_{\mathbb{P}}(r^\nu)$.

\section{Establishing Separability and Definability}\label{sec:verify}

In this section, we identify classes of functions satisfying Assumptions~\ref{as:separable} and~\ref{as:definable}, respectively. 
\subsection{Separability}\label{sec:sep}
Recall that \Cref{as:separable}  requires the set $\{f(\xi,\cdot)|_X \mid \xi \in \Xi\}$ to be contained in a separable subspace of $(\Lip(X),\|\cdot\|_{\lip})$. The following countability condition is an immediate sufficient condition.

\begin{proposition}[countability]\label{prop:count}
Under \Cref{as:f-S}, if the collection $\{f(\xi, \cdot)|_X \mid \xi \in \Xi\}$ is countable, then	\Cref{as:separable} holds.
\end{proposition}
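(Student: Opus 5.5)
The plan is to exhibit an explicit countable dense subset of a closed linear subspace of $(\Lip(X),\|\cdot\|_{\lip})$ that contains the family. By \Cref{as:f-S}(iii), each slice $f(\xi,\cdot)$ is $L(\xi)$-Lipschitz on $X$ with $L(\xi)<\infty$, so $\glip(f(\xi,\cdot)|_X)\le L(\xi)<\infty$. Hence every element $h_\xi = f(\xi,\cdot)|_X$ belongs to $\Lip(X)$, and the family $\mathcal{F}_X=\{h_\xi\mid \xi\in\Xi\}$ is a countable subset of $\Lip(X)$. I would enumerate it as $\{h_j\}_{j\in J}$ with $J\subset\nats$.

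Next, take $V=\operatorname{span}\{h_j\mid j\in J\}\subset\Lip(X)$ and consider its closure $B=\overline{V}$ in $\|\cdot\|_{\lip}$. I claim that the set
\[
Q=\Big\{\nsum_{j\in J_0} c_j h_j \;\Big|\; J_0\subset J \text{ finite},\ c_j\in\mathbb{Q}\Big\}
\]
is countable, since it is a countable union over finite index sets of countable sets. I also claim it is dense in $V$, and hence in $B$. Density follows because any finite real combination $\sum_{j\in J_0} a_j h_j$ is approximated by rational coefficients: by the triangle inequality,
\[
\Big\|\nsum_{j\in J_0}(a_j-c_j)h_j\Big\|_{\lip}\le \nsum_{j\in J_0}|a_j-c_j|\,\|h_j\|_{\lip},
\]
and each $\|h_j\|_{\lip}$ is finite. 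Therefore $B$ is a separable (closed) subspace of $\Lip(X)$ containing $\mathcal{F}_X$, which is exactly \Cref{as:separable}. One could even skip the span and observe directly that a countable set is separable as a metric subspace. However, because \Cref{as:separable} speaks of a "subspace", and the proof of \Cref{thm:countable} uses the closed linear hull $B$, I would present the linear-span version.

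There is no real obstacle here. The only point requiring care is to confirm, from \Cref{as:f-S}(iii), that every slice restricted to $X$ indeed lies in $\Lip(X)$, which requires $L(\xi)$ to be finite pointwise. The codomain $[0,\infty)$ of $L$ guarantees this.
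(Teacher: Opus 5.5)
Your proposal is correct and matches the paper's proof: both take the linear span of the countable family and note that finite rational linear combinations form a countable dense subset, so the span (and hence its closure) is separable. Your explicit check via \Cref{as:f-S}(iii) that each slice lies in $\Lip(X)$ is a detail the paper leaves implicit.
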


\begin{proof}
Let $E=\{f(\xi,\cdot)|_X\mid \xi\in\Xi\}$. Since $E$ is countable, the set of all finite rational linear combinations of elements of $E$ is countable and dense in $\operatorname{span}E$. Hence, $\operatorname{span}E\subset\Lip(X)$ is separable.
\end{proof}

When $\bm{\xi}$ has a discrete distribution, we may restrict $\Xi$ to its countable support. Then $\{f(\xi,\cdot)|_X\mid \xi\in\Xi\}$ is countable, and \Cref{as:separable} follows from \Cref{prop:count}. Discrete distributions arise naturally in stochastic programming, for example, in scenario formulations \cite[Section 2.4.1]{shapiro2021lectures}. Countability also arises through \emph{quantization}, which plays a fundamental role in applications such as signal processing \cite{gray2002quantization}.

\begin{example}[quantization]
Let $J\subset\Xi$ be countable, and let $Q:\Xi\to J$ be called a \emph{quantizer}. Standard examples include rounding, truncation, and other discretizations arising from finite-precision arithmetic. If the function $(\xi,x)\mapsto f(Q(\xi),x)$ satisfies \Cref{as:f-S}, then it also satisfies \Cref{as:separable}.
\end{example}

To further appreciate the countability condition, 
it is instructive to modify the negative result in \Cref{prop:counterex} by replacing the uniform distribution on $[0,1]$ with a discrete distribution supported on a countable set $\Xi' \subset [0,1]$. By \Cref{prop:count} and \Cref{thm:countable}, this change alone suffices to eliminate the failure exhibited in \Cref{prop:counterex}: the function $f$ remains unchanged, and only the law of $\bm{\xi}$ is modified.  

Although perhaps evident, we note that countability of $X$ cannot replace countability of $\Xi$, since the negative result in \cite[Theorem~3]{tian2025failure} continues to hold when $X=\{(0,0)\}\cup\{(0,k^{-1})\}_{k\in\nats}$. While countability is convenient and easy to verify, separability is strictly more general. The following result provides another sufficient condition for \Cref{as:separable} without requiring $\{f(\xi,\cdot)|_X\mid \xi\in\Xi\}$ to be countable.

\begin{proposition}[continuous differentiability]\label{prop:sep-C1}
If $X$ is compact with $0 \in X$ and $f(\xi, \cdot)$ is continuously differentiable  on $\reals^d$ for any $\xi \in \Xi$, then	\Cref{as:separable} holds.
\end{proposition}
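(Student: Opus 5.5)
The plan is to show that the linear subspace
\[
V=\{h|_X \mid h:\reals^d\to\reals \text{ continuously differentiable}\}
\]
is contained in $\Lip(X)$ and is separable for $\|\cdot\|_{\lip}$. The idea is to dominate the Lipschitz norm on $X$ by a $C^1$-type seminorm on a compact set, and to transfer separability from a space of continuous functions. Since each slice $f(\xi,\cdot)$ is $C^1$ on $\reals^d$, the family $\{f(\xi,\cdot)|_X\mid\xi\in\Xi\}$ lies in $V$, which gives \Cref{as:separable}.

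\textbf{Step 1 (Lipschitz bound via the convex hull).} Let $K=\con X$, which is compact because $X$ is compact. For a $C^1$ function $h$ and $x,y\in X$, the segment $[x,y]$ lies in $K$. By the fundamental theorem of calculus along the segment,
\[
h(x)-h(y)=\int_0^1\langle\nabla h(y+s(x-y)),x-y\rangle\,\dd s .
\]
Applying the dual-norm inequality $|\langle v,w\rangle|\le\|v\|_*\|w\|$ to the integrand gives
\[
|h(x)-h(y)|\le \max_{z\in K}\|\nabla h(z)\|_*\,\|x-y\| .
\]
The maximum is finite by continuity of $\nabla h$ on the compact set $K$. Hence
\[
\|h|_X\|_{\lip}\le \max\Bigl\{|h(0)|,\ \max_{z\in K}\|\nabla h(z)\|_*\Bigr\}.
\]
In particular $V\subset\Lip(X)$.

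\textbf{Step 2 (embedding into a separable space).} Define the linear map
\[
T:h\mapsto\bigl(h(0),\nabla h|_K\bigr)\in W=\reals\times C(K;\reals^d),
\]
with the norm $\max\{|a|,\sup_K\|u\|_*\}$ on $W$. Because $K$ is a compact metric space, $C(K;\reals^d)$ is separable, and therefore so is $W$. Any subset of a separable metric space is separable. So the image $T(C^1)$ has a countable dense subset $\{T(h_j)\}_{j\in\nats}$.

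\textbf{Step 3 (transfer of density).} Fix any $C^1$ function $h$ and choose $j_n$ with $T(h_{j_n})\to T(h)$ in $W$. Applying Step 1 to the $C^1$ function $h-h_{j_n}$ and using linearity of $T$,
\[
\|(h-h_{j_n})|_X\|_{\lip}\le\|T(h)-T(h_{j_n})\|_W\to0 .
\]
Thus $\{h_j|_X\}_{j\in\nats}$ is dense in $V$. It follows that $V$, and hence its closure, is a separable subspace of $\Lip(X)$ containing every slice $f(\xi,\cdot)|_X$.

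\textbf{Main obstacle.} The only delicate point is Step 1. $X$ need not be convex, so the mean-value bound must use gradients on $\con X$ rather than on $X$, and it is expressed through the dual norm so that no Euclidean structure is needed. The remaining steps are standard facts about separability of subsets of separable metric spaces.
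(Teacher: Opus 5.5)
Your proposal is correct and follows essentially the same route as the paper's proof. Both bound $\|h|_X\|_{\lip}$ by $\max\{|h(0)|,\sup_{\con X}\|\nabla h\|_*\}$ via the mean value theorem on the compact convex hull, embed via $h\mapsto(h(0),\nabla h|_{\con X})$ into the separable space $\reals\times C(\con X;\reals^d)$, and pull back a countable dense subset of the image.
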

We defer the proof of \Cref{prop:sep-C1} to \Cref{sec:prf:prop:sep-C1}.

\subsection{Definability}\label{sec:def}

We now turn to definability and examine \Cref{as:definable} more closely. We illustrate its scope through three increasingly general classes of functions.
\subsubsection{Jointly definable functions}\label{sec:j-def}

We first consider functions jointly definable in an NIP structure. Let $\mathcal{M}$ be an NIP expansion of $\R-alg$. A function $f:\Xi\times\reals^d\to\reals$ is jointly definable in $\mathcal{M}$ if $\Xi\subset\reals^s$ for some $s\in\nats$ and $\gph f$ is definable in $\mathcal{M}$. As shown below, every jointly definable function satisfies \Cref{as:definable}.

\begin{proposition}[joint definability]\label{prop:j-def}
Given $\Xi \subset \reals^s$, if a function $f:\Xi \times \reals^d \to \reals$  is jointly definable in some NIP expansion $\mathcal{M}$ of $\R-alg$, then \Cref{as:definable} holds.
\end{proposition}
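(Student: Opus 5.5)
The plan is to verify \Cref{as:definable} with a single piece, $I=\{1\}$ and $\Xi_1=\Xi$. We take the structure $\mathcal{M}_1=\mathcal{M}$, and we obtain the formula $\phi_1$ directly from the defining formula of $\gph f$, using $\xi$ itself as the parameter.

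Since $\gph f\subset\reals^{s}\times\reals^d\times\reals$ is definable in $\mathcal{M}$, there are an $\mathcal{L}$-formula $\psi(w,x,r;y)$ and a parameter $b\in\reals^m$ such that $(\xi,x,r)\in\gph f$ if and only if $\mathcal{M}\models\psi(\xi,x,r;b)$. Here $w\in\reals^s$ is the variable for $\xi$.

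We set $k_1=s+m$ and, with $z=(w,y)$,
\[
\phi_1(x,r;z)=\psi(w,x,r;y).
\]
For $\xi\in\Xi$ we choose the parameter $z=(\xi,b)\in\reals^{s+m}$. Then for every $x\in\reals^d$, and in particular for every $q_k$, we get $r=f(\xi,x)$ if and only if $\mathcal{M}\models\phi_1(x,r;z)$. This is exactly \eqref{eq:def-q}.

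Note that we never need to define $\Xi$ separately, since $\phi_1$ is only evaluated at parameters with $w=\xi\in\Xi$. The density of $\{q_k\}$ plays no role here. The requirement that $\mathcal{M}_1$ be an NIP expansion of $\R-alg$ holds by hypothesis.

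The remaining item is condition (i): $\Xi_1=\Xi$ must be universally measurable relative to $\mathcal{A}$. This is immediate, since $\Xi\in\mathcal{A}$ for any $\sigma$-algebra $\mathcal{A}$ on $\Xi$. I expect the only point needing care to be this bookkeeping, namely that the parameter may depend on $\xi$, which \Cref{as:definable} explicitly allows. There is no genuine obstacle.
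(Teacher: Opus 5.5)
Your proposal is correct and matches the paper's proof: a single piece $I=\{1\}$ with $\Xi_1=\Xi$ and $\mathcal{M}_1=\mathcal{M}$, where the defining formula of $\gph f$ is reread with $(\xi,b)$ as the parameter tuple, so that $k_1=s+m$. Your remark that $\Xi\in\mathcal{A}$ settles condition (i) is a trivial check that the paper leaves implicit.
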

\begin{proof}
Since $f$ is definable in $\mathcal M$, the set $\gph f$ is definable by some formula
$\phi(\xi,x,r; z)$ with $\xi\in\reals^s$, $x\in\reals^d$, $r\in\reals$, and parameter $z \in \reals^k$. Let $I=\{1\},\Xi_\ell = \Xi, \mathcal{M}_\ell = \mathcal{M}, \phi_\ell(x,r;(\xi,z)) = \phi(\xi,x,r; z), k_\ell = s+k, z_{\ell,\xi}=(\xi, z) \in \reals^{k_\ell}$ for $\ell \in I$.
Then \Cref{as:definable} holds.
\end{proof}

Functions jointly definable in an expansion of $\R-alg$ enjoy robust closure properties. In particular, joint definability is preserved under restriction to definable sets, finite Cartesian products, coordinate projections, composition with definable maps, pointwise sums and products, and pointwise maxima and minima; see \cite[Chapter~1,~\S2]{van1998tame}. These elementary closure properties do not require NIP. The NIP assumption is nevertheless mild and, by \Cref{prop:om-NIP}, is satisfied in particular by every o-minimal structure.

\begin{corollary}[joint o-minimality]\label{coro:j-omin}
Let $(\Xi, \mathcal{A})$ be a measurable space, and let $\bm{\xi},\bm{\xi}^1,\bm{\xi}^2,\ldots$ be $\Xi$-valued iid random variables on a complete probability space $(\Omega,\mathcal{F},\mathbb{P})$. Suppose that $f:\Xi\times\reals^d\to\reals$ and $X\subset\reals^d$ satisfy \Cref{as:f-S}. If $f$ is jointly definable in an o-minimal expansion of $\R-alg$, then
\[
    \lim_{\nu\to\infty} d_X(E^\nu f,Ef)=0,
    \qquad \mathbb{P}\text{-a.s.}
\]
If, in addition, $
\Ex[|f(\bm{\xi},0)|^2]<\infty$ and $\Ex[L^2]<\infty$, then for any $\delta > 0$, there exists a constant $C_\delta < \infty$ such that for all sufficiently large $\nu \in \nats$, with probability at least $1-\delta$, one has 
\[
 d_X(E^\nu f,Ef) \leq  C_\delta\nu^{-1/2}.
 \]
\end{corollary}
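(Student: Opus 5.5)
The corollary should follow by chaining results already in place: \Cref{prop:om-NIP}, then \Cref{prop:j-def}, then \Cref{thm:lln} and \Cref{thm:lip-rate}. The plan is to check that the hypotheses line up, including the measurability hidden in \Cref{as:definable}.

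\textbf{Step 1 (NIP and definability).} Let $\mathcal{M}$ be the o-minimal expansion of $\R-alg$ in which $f$ is jointly definable. By \Cref{prop:om-NIP}, $\mathcal{M}$ has NIP. Joint definability presupposes $\Xi\subset\reals^s$, so \Cref{prop:j-def} applies. Following its proof, take $I=\{1\}$, $\Xi_1=\Xi$, $\mathcal{M}_1=\mathcal{M}$, and $\phi_1(x,r;(\xi,z))=\phi(\xi,x,r;z)$, where $\phi$ defines $\gph f$ with parameter $z$.

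\textbf{Step 2 (universal measurability).} \Cref{as:definable}(i) asks that each $\Xi_\ell$ be universally measurable relative to $\mathcal{A}$. With the single-piece cover $\Xi_1=\Xi$, this holds trivially for any $\sigma$-algebra $\mathcal{A}$, since $\Xi\in\mathcal{A}$. No compatibility between $\mathcal{A}$ and the Borel or definable structure of $\reals^s$ is needed: measurability of $f(\cdot,x)$ is already part of \Cref{as:f-S}. Condition \eqref{eq:def-q} holds for every dense sequence $\{q_k\}$ in $X$, because $\phi_1$ defines the full graph of each slice. Such a sequence exists since $X\subset\reals^d$ is separable. I expect this bookkeeping to be the only point needing care, namely that \Cref{as:definable} is stated relative to an arbitrary dense subset. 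It is not a real obstacle.

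\textbf{Step 3 (conclusions).} With Assumptions~\ref{as:f-S} and \ref{as:definable} in force and $\mathbb{P}$ complete, \Cref{thm:lln} gives $d_X(E^\nu f,Ef)\to0$ $\mathbb{P}$-a.s. Since $|I|=1<\infty$, the extra moment conditions let us invoke \Cref{thm:lip-rate}. This yields $d_X(E^\nu f,Ef)=O_{\mathbb{P}}(\nu^{-1/2})$, which is precisely the stated high-probability bound with constant $C_\delta$ for all $\nu\geq\nu_0$.
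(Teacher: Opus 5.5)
Your proposal is correct and follows the paper's route exactly: \Cref{prop:om-NIP} and \Cref{prop:j-def} verify \Cref{as:definable} with the single piece $\Xi_1=\Xi$, and then \Cref{thm:lln} and, since $|I|=1$, \Cref{thm:lip-rate} give the two conclusions. Your checks that $\Xi_1=\Xi\in\mathcal{A}$ is universally measurable and that \eqref{eq:def-q} holds for any dense sequence $\{q_k\}$ in $X$ are accurate, and they spell out what the paper leaves implicit.
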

\begin{proof}
	The claim follows from \Cref{prop:om-NIP,prop:j-def}, \Cref{thm:lln,thm:lip-rate}.
\end{proof}

As shown in \cite[Section 5.2]{davis2020stochastic}, \cite[Section 6.2]{bolte2021conservative}, and \cite{bareilles2025deep}, the training losses of many modern deep learning models are jointly definable in an o-minimal structure, notably the real exponential field $\mathcal{R}_{\rm exp}$. Hence, \Cref{thm:lln} applies under mild integrability and Lipschitz assumptions. We emphasize, however, that joint definability of $f$ does not imply definability of the expectation function $Ef$, which may require additional continuity and definability assumptions on the distribution $P$.

\subsubsection{Functions with uniformly definable slices} 

Joint definability requires $\Xi$ to be a subset of a Euclidean space, which may be inconvenient or unnecessary. In this subsection, we show that uniform definability, corresponding to \Cref{as:definable} with $|I|=1$, provides a natural generalization of joint definability. We begin with the following sufficient condition.

\begin{proposition}\label{prop:ud}
A function $f:\Xi \times \reals^d \to \reals$ satisfies \Cref{as:definable} if there exist a function $Z:\Xi \to \reals^k$ and a jointly definable function $g:\reals^k \times \reals^d \to \reals$ on an NIP expansion $\mathcal{M}$ of $\R-alg$ such that $f(\xi, x) = g(Z(\xi), x)$ for all $(\xi, x) \in \Xi \times \reals^d$.
\end{proposition}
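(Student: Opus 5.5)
The plan is to verify \Cref{as:definable} directly with a single piece, $I=\{1\}$, taking $\Xi_1=\Xi$ and $\mathcal{M}_1=\mathcal{M}$. Universal measurability of $\Xi_1=\Xi$ relative to $\mathcal{A}$ is trivial, and $\cup_{\ell\in I}\Xi_\ell=\Xi$ holds by construction. Requirements (i) and (ii) are therefore immediate, and all the work lies in producing the formula $\phi_1$ and the parameters $z_{1,\xi}$ so that \eqref{eq:def-q} holds for every $\xi$ and every $q_k$.

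Since $g$ is jointly definable in $\mathcal{M}$, its graph $\gph g\subset\reals^k\times\reals^d\times\reals$ is definable. So there are a formula $\psi(w,x,r;c)$ in the language of $\mathcal{M}$, with $w\in\reals^k$, $x\in\reals^d$, $r\in\reals$, and a fixed parameter $c\in\reals^{m}$, such that $\mathcal{M}\models\psi(w,x,r;c)$ if and only if $r=g(w,x)$. As in the proof of \Cref{prop:j-def}, I will move $w$ into the parameter slot: set $\phi_1(x,r;(w,c))=\psi(w,x,r;c)$ with $k_1=k+m$, and for each $\xi\in\Xi$ take $z_{1,\xi}=(Z(\xi),c)$.

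Then, for every $x\in\reals^d$ and in particular every $q_k$, we have $r=f(\xi,x)=g(Z(\xi),x)$ if and only if $\mathcal{M}\models\phi_1(x,r;z_{1,\xi})$, which is exactly \eqref{eq:def-q}. No measurability of $Z$ is required, because \Cref{as:definable} asks only for the existence of a parameter for each $\xi$, with no regularity in $\xi$. Measurability of $f(\cdot,x)$ is part of \Cref{as:f-S} and is not needed here.

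The only point needing care is the syntactic one: merging the variable block $w$ and the defining parameter $c$ into a single parameter tuple must still give a formula of $\mathcal{M}$'s language. It does, since the language is unchanged and only the partition of free variables into object and parameter variables changes. This is also why the NIP property, which is a property of $\mathcal{M}$ and thus of all its formulas, carries over automatically.
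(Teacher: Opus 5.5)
Your proposal is correct and follows the paper's proof essentially verbatim. It uses one piece $I=\{1\}$ with $\Xi_1=\Xi$ and $\mathcal{M}_1=\mathcal{M}$, reuses the formula defining $\gph g$ as $\phi_1$ with the $\reals^k$-block treated as a parameter variable, and takes the parameter $(Z(\xi),c)$ for each $\xi\in\Xi$, so that the identity holds for every $x\in\reals^d$ and hence gives \eqref{eq:def-q} at each $q_k$ without any measurability of $Z$.
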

\begin{proof}
Let $\Xi_\ell = \Xi$ and $\mathcal{M}_\ell = \mathcal{M}$ for $\ell \in I = \{1\}$. Since $g$ is jointly NIP definable, there exists a formula $\phi(x, r, z; b)$ with $x \in \reals^d, r\in \reals, z \in \reals^k$, and parameter $b \in \reals^p$ such that $\gph g = \{(z,x,r) \mid \mathcal{M}\models \phi(x,r,z; b)\}$. Let $\phi_\ell = \phi$. For any $\xi \in \Xi$, we have $\gph f(\xi, \cdot) = \gph g(Z(\xi), \cdot)=\{(x,r)\mid \mathcal{M}\models\phi(x,r,Z(\xi); b)\}$, hence $f(\xi, \cdot)$ is definable by the formula $\phi$ with parameter $(Z(\xi), b)$.
\end{proof}

It is worth noting that the function $Z:\Xi \to \reals^k$ may be arbitrary and need not be definable in any structure; consequently, $\Xi$ need not be a Euclidean subset. Indeed, even when $\Xi = \reals$, the class of functions satisfying the assumptions of \Cref{prop:ud} already contains functions that are not jointly definable in any NIP structure, as the following example shows.

\begin{example}[uniformly definable but not jointly definable]
Consider $f(\xi,x)=\bm{1}_{\mathbb{Z}}(\xi)x^2$. Let $Z(\xi)=\bm{1}_{\mathbb{Z}}(\xi)$ and define $g:\reals^2\to\reals$ by $g(t,x)=tx^2$. Then $f(\xi,x) = g(Z(\xi),x)$. By \Cref{prop:ud}, \Cref{as:definable} holds, since $g$ is semialgebraic. However, the function $f:\reals^2 \to \reals$ is not jointly definable in any NIP expansion of $\R-alg$. Indeed, if $f$ were definable in such an expansion, then $\mathbb{Z}=\{\xi\in\reals\mid f(\xi,1)=1\}$ would also be definable, contradicting \Cref{prop:Z-IP}.
\end{example}

As in the jointly definable case, an analogue of \Cref{coro:j-omin} holds when the family $\{f(\xi,\cdot)\mid \xi\in\Xi\}$ is uniformly definable. In particular, one obtains convergence of $d_X(E^\nu f,Ef)$ and, under the additional conditions $\Ex[|f(\bm{\xi},0)|^2]<\infty$ and $\Ex[L^2]<\infty$, the rate $O_{\mathbb{P}}(\nu^{-1/2})$. We omit the formal statement for brevity.
\subsubsection{Functions with piecewise uniformly definable slices}\label{sec:pu-def}

We now consider the full generality of \Cref{as:definable}. It extends uniform definability by covering $\Xi$ with sets $\{\Xi_\ell\}_{\ell\in I}$ and requiring that, for each $\ell\in I$, the restrictions of the family $\{f(\xi,\cdot)\mid \xi\in\Xi_\ell\}$ to $\{q_k\}_{k\in\nats}$ be uniformly definable in some NIP structure $\mathcal{M}_\ell$.

When $|I|<\infty$, \Cref{thm:lln,thm:lip-rate} yield convergence and rate estimates analogous to those in \Cref{coro:j-omin}. As the following example shows, piecewise uniform definability is strictly more general than uniform definability, even when $|I|=2$.

\begin{example}[piecewise uniformly definable but not uniformly]
Consider the NIP expansions $
\mathcal M_1=(\reals,+,\cdot,0,1,<,2^\mathbb{Z})$ and 
$\mathcal M_2=(\reals,+,\cdot,0,1,<,3^\mathbb{Z})$; see \cite[Theorem 6.5]{gunaydin2011dependent}.
Let $\Xi=\{1,2\}$, $\Xi_1=\{1\}$, and $\Xi_2=\{2\}$. Define $
f(1,x)=\dist(x,\{0\}\cup 2^\mathbb{Z})$ and 
$f(2,x)=\dist(x,\{0\}\cup 3^\mathbb{Z}).$
Then $f(\xi, \cdot)$ is uniformly definable in
$\mathcal{M}_1$ when $\xi \in \Xi_1$ and uniformly definable in $\mathcal{M}_2$ when $\xi \in \Xi_2$. Hence, \Cref{as:definable} holds.
However, $f$ is not uniformly definable in any single NIP expansion $\mathcal{M}$
of $\R-alg$. Indeed, if both $f(1,\cdot)$ and $f(2,\cdot)$ were definable in $\mathcal{M}$, then their zero sets would be definable, and hence so
would $
2^\mathbb{Z}$ and $3^\mathbb{Z}.$
Since $\log_2 3\notin\mathbb Q$,
\cite[Theorem~1.3]{hieronymi2010defining} implies that $\mathcal M$
defines $\mathbb Z$. By \Cref{prop:Z-IP}, $\mathcal M$ then has IP,
contradicting NIP.
\end{example}

When $I$ is countably infinite, countable decompositions $\Xi=\cup_{\ell\in I}\Xi_\ell$ are closely related to nonuniform learnability in learning theory \cite[Chapter~7]{shalev2014understanding} and are standard in empirical process theory \cite{van1996new,van2000preservation}. In this setting, \Cref{thm:lln} still yields $d_X(E^\nu f,Ef)\to 0$. However, unlike in the finite-piece case $|I|<\infty$, \Cref{prop:norate} shows that this convergence can be arbitrarily slow.

A natural question in \Cref{as:definable} is how to choose the pieces $\{\Xi_\ell\}_{\ell\in I}$. This choice may be evident in a scenario formulation but is less apparent in general. In the sequel, we show that, for a broad class of functions, there is a universal construction that exploits the countability of the languages of the structures $\{\mathcal{M}_\ell\}_{\ell\in I}$. Our result requires the following assumption, based on the notion of a \emph{totally Borel} structure; see \cite[Definition 1.1.1]{steinhorn1985chapter}.

\begin{assumption}[slicewise definability]\label{as:p-def}
	Given a measurable space $(\Xi, \mathcal{A})$ and a function $f:\Xi \times \reals^d \to \reals$, assume that there exists a family $\{\mathcal{M}_\ell\}_{\ell \in I}$ of NIP expansions  of $\R-alg$ indexed by $I \subset \nats$, such that, for any $\ell \in I$,
	\begin{enumerate}[label=\textnormal{(\roman*)}]
		\item the language of $\mathcal{M}_\ell$ is countable, and
		\item $\mathcal{M}_\ell$ is totally Borel, meaning that every set definable in $\mathcal{M}_\ell$ is Borel.
	\end{enumerate}
Moreover, for any $\xi \in \Xi$, $f(\xi, \cdot)$ is definable in $\mathcal{M}_\ell$ for some $\ell \in I$.
\end{assumption}

In \Cref{as:p-def}, neither an explicit cover $\{\Xi_\ell\}_{\ell\in I}$ nor the uniform definability required in \Cref{as:definable} is assumed. Instead, for each $\xi\in\Xi$, the slice $f(\xi,\cdot)$ is definable in some totally Borel structure $\mathcal{M}_\ell$ in a countable language, with $\ell\in I$, without any explicit uniformity in $\xi$.

\begin{theorem}[slicewise definability]\label{thm:o-minimal}
Let $(\Xi, \mathcal{A})$ be a measurable space, and let $\bm{\xi},\bm{\xi}^1,\bm{\xi}^2,\ldots$ be $\Xi$-valued iid random variables on a complete probability space $(\Omega,\mathcal{F},\mathbb{P})$. Suppose that $f:\Xi\times\reals^d\to\reals$ and $X\subset\reals^d$ satisfy Assumptions~\ref{as:f-S} and \ref{as:p-def}. Then
\[
    \lim_{\nu\to\infty} d_X(E^\nu f,Ef)=0,
    \qquad \mathbb{P}\text{-a.s.}
\]
\end{theorem}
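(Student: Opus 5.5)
The plan is to reduce \Cref{thm:o-minimal} to \Cref{thm:lln}: I will construct a countable cover $\{\Xi_{\ell,\phi}\}$ of $\Xi$ by universally measurable sets such that \Cref{as:definable} holds. The natural indexing is by pairs $(\ell,\phi)$, where $\ell\in I$ and $\phi(x,r;z)$ ranges over the formulas in the language of $\mathcal{M}_\ell$. Because each language is countable, the set of formulas is countable, so the index set $\{(\ell,\phi)\}$ is countable and can be reindexed by a subset of $\nats$. Fix the dense set $D=\{q_k\}_{k\in\nats}\subset X$. Set
\[
\Xi_{\ell,\phi}=\Bigl\{\xi\in\Xi \Bigm| \exists z\in\reals^{k_\phi}\ \forall k\in\nats:\ \bigl(\mathcal{M}_\ell\models\phi(q_k,r;z)\iff r=f(\xi,q_k)\bigr)\ \forall r\in\reals\Bigr\}.
\]
Slicewise definability gives the covering property. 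For each $\xi$, the slice $f(\xi,\cdot)$ is definable in some $\mathcal{M}_\ell$ by some formula $\phi$ with some parameter $z$, so $\xi\in\Xi_{\ell,\phi}$. Each piece then satisfies \eqref{eq:def-q} by construction. Items (ii)--(iii) of \Cref{as:definable} are immediate, since $\mathcal{M}_\ell$ is NIP and $\phi$ is in its language.

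The main obstacle is universal measurability of $\Xi_{\ell,\phi}$, because of the existential quantifier over $z$. I would first express the set as a projection. Consider the map $\Phi:\Xi\to\reals^{\nats}$, $\xi\mapsto(f(\xi,q_k))_{k}$, which is $\mathcal{A}/\mathcal{B}(\reals^\nats)$-measurable by \Cref{as:f-S}(i). Let $G_\phi\subset\reals^{\nats}\times\reals^{k_\phi}$ be the set of pairs $(w,z)$ such that, for every $k$, we have $\mathcal{M}_\ell\models\phi(q_k,w_k;z)$ and $\phi(q_k,\cdot\,;z)$ holds at no other $r$. Since $\mathcal{M}_\ell$ is totally Borel, for each fixed $k$ the sets $\{(r,z)\mid\mathcal{M}_\ell\models\phi(q_k,r;z)\}$ and $\{z\mid\exists!\,r:\ \mathcal{M}_\ell\models\phi(q_k,r;z)\}$ are definable (with parameter $q_k$), hence Borel. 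Therefore $G_\phi$ is a countable intersection of Borel sets in the Polish space $\reals^\nats\times\reals^{k_\phi}$, and so it is Borel. Its projection $\pi(G_\phi)\subset\reals^\nats$ is analytic, hence universally measurable. Finally, $\Xi_{\ell,\phi}=\Phi^{-1}(\pi(G_\phi))$.

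The delicate point is that preimages of universally measurable sets under $\mathcal{A}$-measurable maps are universally measurable relative to $\mathcal{A}$. To check this, take any probability $\mu$ on $\mathcal{A}$ and push it forward to $\mu\circ\Phi^{-1}$ on $\mathcal{B}(\reals^\nats)$. The analytic set $\pi(G_\phi)$ is then sandwiched between Borel sets $C\subset\pi(G_\phi)\subset C'$ of equal pushforward measure, and pulling back gives $\Phi^{-1}(C)\subset\Xi_{\ell,\phi}\subset\Phi^{-1}(C')$ with $\mu(\Phi^{-1}(C')\setminus\Phi^{-1}(C))=0$. The same argument works for any $\sigma$-finite measure, or one can restrict to probability completions, which is all the proof of \Cref{thm:lln} uses.

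With the cover in hand, \Cref{as:definable} holds with the countable index set of pairs $(\ell,\phi)$, and \Cref{thm:lln} yields $d_X(E^\nu f,Ef)\to0$, $\mathbb{P}$-a.s. One minor check is that the proof of \Cref{thm:lln} only requires a parameter $z_{\ell,\xi}$ to exist for each $\xi$, with no measurable selection. This holds, because the map $\Psi_\ell$ in \Cref{lem:Gm-VC} is used purely combinatorially, to transfer the VC property.
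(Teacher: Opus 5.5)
Your proposal is correct and follows essentially the same route as the paper. The paper likewise enumerates the countably many formulas across $\{\mathcal{M}_\ell\}_{\ell\in I}$ and uses the totally Borel hypothesis to make the unique-value parameter sets and the sets $\{(v,z)\mid \phi(q_k,v_k;z)\ \forall k\}$ Borel. It then takes the analytic, hence universally measurable, projection onto $\reals^\nats$, pulls it back under $\xi\mapsto(f(\xi,q_k))_k$, and invokes \Cref{thm:lln}. The only differences are cosmetic: you index the pieces by pairs $(\ell,\phi)$, and you prove the pullback step for universally measurable sets directly, where the paper cites Cohn's Lemma~8.4.6.
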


The proof of \Cref{thm:o-minimal} is deferred to \Cref{sec:prf:thm:o-minimal} and mainly verifies \Cref{as:definable}. Informally, we enumerate all formulas arising from the structures $\{\mathcal{M}_\ell\}_{\ell \in I}$ and, after reindexing, define $\Xi_\ell$ as the set of $\xi$ whose value sequence $\{f(\xi,q_j)\}_{j \in \nats}$ can be represented by the $\ell$th formula, where $\{q_j\}_{j\in\nats}$ is a countable dense subset of $X$. This enumeration is possible because $I$ is countable and each $\mathcal{M}_\ell$ has a countable language. The proof then concludes by showing that each $\Xi_\ell$ is universally measurable relative to $\mathcal{A}$, using the totally Borel hypothesis.

By \cite[Proposition 1.1]{kaiser2012first}, every o-minimal structure is totally Borel. Hence any countable-language o-minimal structure is admissible for \Cref{as:p-def}; important examples include the real ordered field $\R-alg$ and the real exponential field $\mathcal{R}_{\rm exp}$. The restricted analytic field $\mathcal{R}_{\rm an}$ defines globally subanalytic functions. Although its language contains a symbol for every restricted analytic function and is therefore uncountable, most functions arising in practice involve only countably many such functions across all slices. Retaining only these functions together with the ordered-field language yields a countable-language reduct of $\mathcal{R}_{\rm an}$, which is o-minimal.

\begin{corollary}[countable-language o-minimal slices]\label{coro:o-minimal}
Let $(\Xi, \mathcal{A})$ be a measurable space, and let $\bm{\xi},\bm{\xi}^1,\bm{\xi}^2,\ldots$ be $\Xi$-valued iid random variables on a complete probability space $(\Omega,\mathcal{F},\mathbb{P})$. Let $\mathcal{M}$ be a countable-language o-minimal expansion of $\R-alg$. Suppose that $f:\Xi\times\reals^d\to\reals$ and $X\subset\reals^d$ satisfy \Cref{as:f-S} and, for any $\xi \in \Xi$, the function $f(\xi, \cdot)$ is definable in the structure $\mathcal{M}$. Then
\[
    \lim_{\nu\to\infty} d_X(E^\nu f,Ef)=0,
    \qquad \mathbb{P}\text{-a.s.}
\] 
\end{corollary}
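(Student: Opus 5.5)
The plan is to reduce \Cref{coro:o-minimal} to \Cref{thm:o-minimal} by checking \Cref{as:p-def} with a single structure. I take the index set $I=\{1\}$ and set $\mathcal{M}_1=\mathcal{M}$. The final clause of \Cref{as:p-def} is then exactly the hypothesis: every slice $f(\xi,\cdot)$ is definable in $\mathcal{M}$. Since \Cref{as:f-S} is assumed, \Cref{thm:o-minimal} gives $d_X(E^\nu f,Ef)\to0$, $\mathbb{P}$-a.s., once the three structural requirements on $\mathcal{M}_1$ are in place.

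The three requirements are verified as follows.
\begin{enumerate}[label=\textnormal{(\alph*)}]
\item $\mathcal{M}_1$ is an NIP expansion of $\R-alg$. By assumption it is an expansion of $\R-alg$, and since it is o-minimal, \Cref{prop:om-NIP} shows it has NIP.
\item The language of $\mathcal{M}_1$ is countable. This is part of the hypothesis of the corollary.
\item $\mathcal{M}_1$ is totally Borel. This follows from the fact cited just before the corollary, \cite[Proposition 1.1]{kaiser2012first}, that every o-minimal structure is totally Borel.
\end{enumerate}

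If I wanted to avoid quoting \cite{kaiser2012first}, I would argue (c) directly from the cell decomposition theorem. Every set definable in an o-minimal expansion of $\R-alg$ is a finite union of cells. Each cell is built inductively as a graph of, or a band between, continuous definable functions over a lower-dimensional cell. Graphs of continuous functions over Borel sets are Borel, and so are the regions between two such functions, so by induction every cell, and hence every definable set, is Borel.

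I expect no real obstacle here. The only point needing care is (c), the totally Borel requirement, and it is handled by the cited result. All the measurability work, namely showing that the induced pieces $\Xi_\ell$ are universally measurable, is already contained in \Cref{thm:o-minimal}.
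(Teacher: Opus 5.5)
Your proposal is correct and matches the paper's proof, which likewise applies \Cref{thm:o-minimal} with the single structure $\mathcal{M}$, obtaining NIP from \Cref{prop:om-NIP} and the totally Borel property from \cite[Proposition 1.1]{kaiser2012first}. Your optional cell-decomposition argument for the totally Borel property is also sound: a graph over a Borel cell is relatively closed, and a band between continuous functions over it is relatively open, in a Borel cylinder, so both are Borel.
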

\begin{proof}
	The claim follows from \cite[Proposition 1.1]{kaiser2012first}, \Cref{prop:om-NIP}, and \Cref{thm:o-minimal}.
\end{proof}

\begin{remark}\label{rmk:omin}
Much of the nonsmooth optimization literature invokes o-minimality to rule out pathological behavior; see, e.g., \cite{bolte2007clarke,davis2020stochastic,bolte2023subgradient}. In \Cref{coro:o-minimal}, and more generally in \Cref{thm:o-minimal}, however, the convergence of $d_X(E^\nu f,Ef)$ follows from a combinatorial property, namely NIP, which every o-minimal structure possesses. By contrast, the works cited above rely primarily on geometric properties, expressed through stratifiability in various senses, to establish convergence of subgradient-type methods. Such stratifiability is neither necessary nor sufficient for convergence in the Lipschitz pseudometric, even when the number of strata is uniformly bounded; see \Cref{sec:rmk:omin} for details. This distinction highlights the different roles of combinatorial and geometric regularity.
\end{remark}

\begin{remark}
In \Cref{thm:o-minimal}, two countability assumptions are imposed: the index set $I$ is countable, and each structure $\mathcal{M}_\ell$ has a countable language. Our proof relies essentially on both assumptions to enumerate all formulas arising from the family $\{\mathcal{M}_\ell\}_{\ell\in I}$. It is natural to ask whether either assumption can be removed. 
We conjecture that neither assumption can be dispensed with without additional hypotheses.
\end{remark}

\subsection{Proofs}
We collect the deferred proofs and details from \Cref{sec:sep,sec:def}.

\subsubsection{Proof of \Cref{prop:sep-C1}}\label{sec:prf:prop:sep-C1}

By compactness of $X$, 
 the set $
E=
\{h|_X \mid h\in C^1(\reals^d)\}$ is a subspace of $(\Lip(X), \|\cdot\|_{\lip}).$ Since $\{f(\xi,\cdot)|_X \mid \xi\in\Xi\} \subset E$, it suffices to show that $E$ is separable. 
Let $Y = \con X$, which is also compact. 
For $h,g\in C^1(\reals^d)$, by the mean value theorem, $
\glip_X(h-g)
\leq
\nsup_{y\in Y}\|\nabla h(y)-\nabla g(y)\|_*.$
Hence $
\|h|_X-g|_X\|_{\lip}
\leq
\max\{
|h(0)-g(0)|,
\nsup_{y\in Y}\|\nabla h(y)-\nabla g(y)\|_*
\}$.
Define the linear map $
\Phi:C^1(\reals^d)\to \reals\times C(Y;\reals^d)$ by $\Phi(h)=(h(0),\nabla h|_Y).$
Since $Y$ is compact with Euclidean metric, $C(Y;\reals^d)$ is separable \cite[Lemma 3.99]{aliprantis2006infinite}.
Therefore $\reals\times C(Y;\reals^d)$ with norm $(a,b)\mapsto \max\{|a|, \|b\|_\infty\}$ is separable, where $\|b\|_\infty = \sup_{y \in Y} \|b(y)\|_*$. The image
$\Phi(C^1(\reals^d))$ is a subset of the separable metric space $\reals\times C(Y;\reals^d)$ and is therefore separable \cite[Corollary 3.5]{aliprantis2006infinite}. Let $H$ be a countable subset of $C^1(\reals^d)$ such that $\Phi(H)$ is dense in $\Phi(C^1(\reals^d))$. For any $h \in C^1(\reals^d)$, there exist $\{h_k\}_{k \in \nats} \subset H$ such that $\Phi(h_k) \to \Phi(h)$ in the above norm, which, by the above inequality, implies $\|h_k|_X - h|_X\|_{\lip} \to 0$. Hence $\{h|_X \mid h \in H\}$ is countable dense in $E$, and $E$ is a separable subset of
$(\Lip(X),\|\cdot\|_{\lip})$.
 Thus Assumption~\ref{as:separable} holds.

\subsubsection{Proof of \Cref{thm:o-minimal}}\label{sec:prf:thm:o-minimal}
Since the structure $\mathcal{M}_\ell$ has countable language, the set of formulas in the language of $\mathcal{M}_\ell$ is countable; see \cite[Proposition 1.3.4]{chang1990model}. Hence, the set of all possible formulas in the structures $\{\mathcal{M}_\ell\}_{\ell \in I}$ is also countable. We can therefore enumerate all relevant formulas that have at least $d+1$ variables as $\{\phi_i(x,r;z_i)\}_{i \in \nats}$, 
where $x \in \reals^d, r\in\reals, z_i \in \reals^{k_i}$, $k_i \in \{0\}\cup \nats$, and $\phi_i$ is defined in the language of $\mathcal{M}_{\ell(i)}$ with $\ell(i) \in \nats$.
Let $\{q_j\}_{j \in \nats}$ be a countable dense subset of $X$.
We then verify \Cref{as:definable} and the conclusion follows from \Cref{thm:lln}.

To this end, for each $i\in\nats$,  we focus on the formula $\phi_i$ and let
\[
Z_i=\bigcap\nolimits_{j=1}^\infty \{z\in \reals^{k_i} \mid \mathcal{M}_{\ell(i)}\models (\exists ! r)\ \phi_i(q_j, r; z) \},
\]
where the quantifier $\exists!$ denotes unique existence and is definable in first-order logic.
Hence, the set $\{z\in \reals^{k_i} \mid \mathcal{M}_{\ell(i)}\models(\exists ! r)\ \phi_i(q_j, r; z) \}$ is definable in  $\mathcal{M}_{\ell(i)}$. Since $\mathcal{M}_{\ell(i)}$ is totally Borel, $Z_i$ is Borel.  For any $i,j \in \nats$, define $G_{i,j}=\{(v, z) \in \reals^\nats \times Z_i \mid \mathcal{M}_{\ell(i)} \models \phi_i (q_j, v_j, z)\}$ and
\[
H_i = \bigcap\nolimits_{j=1}^\infty  G_{i,j} = \Big\{(v, z) \in \reals^\nats \times Z_i \Bigm| \mathcal{M}_{\ell(i)} \models \phi_i (q_j, v_j, z), \forall j \in \nats\Big\},
\]
where the space $\reals^\nats$ is endowed with the product topology.
Let $E_{i,j}=\{(v_j, z) \in \reals \times \reals^{k_i} \mid \mathcal{M}_{\ell(i)} \models \phi_i(q_j,v_j,z)\}$, which is definable in $\mathcal{M}_{\ell(i)}$, and therefore, is Borel.
 The set $G_{i, j}$ is the preimage of a Borel set $E_{i,j}\cap (\reals \times Z_i)$ under the continuous coordinate projection $(v,z)\mapsto (v_j,z)$, so that $G_{i, j}$, and also $H_i$, are Borel on $\reals^\nats \times \reals^{k_i}$. Since $\reals^\nats$ and $\reals^{k_i}$ are Polish, using \cite[Proposition 8.4.4]{cohn2013measure}, the following set is universally measurable relative to $\mathcal{B}(\reals^\nats)$:
\[
V_i = \{v \in \reals^\nats \mid \exists z \in Z_i, (v, z) \in H_i\},
\]
which, simply put, is the collection of all value sequences on $\{q_j\}_{j\in\nats}$ realized by the formula $\phi_i$ with parameters that define a unique value at every $q_j$. 
Consider the following mapping
	\[
		\Psi:\Xi  \longrightarrow \reals^\nats, \qquad  
		\xi  \longmapsto (f(\xi, q_1), f(\xi, q_2), \ldots),
	\]
	which is $\mathcal{A}/\mathcal{B}(\reals^\nats)$-measurable since $f(\cdot, x)$ is $\mathcal{A}$-measurable for all $x \in \reals^d$; see \cite[Proposition 2.4]{folland1999real}.
	For each $i \in \nats$, define
	\[
	\Xi_i=\Psi^{-1}(V_i) \subset \Xi.
	\]
By using \cite[Lemma 8.4.6]{cohn2013measure}, the set $\Xi_i$
is universally measurable relative to $\mathcal{A}$. 

By construction, for any $i \in \nats$ and any $\xi \in \Xi_i$, there exists $z \in Z_i$ such that $\mathcal{M}_{\ell(i)} \models \phi_i(q_j, f(\xi, q_j); z)$ for all $j \in \nats$. Meanwhile, by construction of $Z_i$, we have $\mathcal{M}_{\ell(i)} \models \phi_i(q_j, r; z)$ if and only if $r = f(\xi, q_j)$.

 To show $\cup_i \Xi_i = \Xi$, suppose not. Then, there exists $\xi \in \Xi\setminus(\cup_i \Xi_i)$. By assumption, the function $f(\xi, \cdot)$ is definable in some structure in $\{\mathcal{M}_\ell\}_{\ell \in I}$. Hence, there exists $i \in \nats, z \in \reals^{k_i}$, and a formula $\phi_i$ such that $\gph f(\xi, \cdot) = \phi_i(\reals^{d+1}; z)$, so that $\xi \in \Xi_i$, a contradiction.

\subsubsection{Missing details in \Cref{rmk:omin}}\label{sec:rmk:omin}
Let $\mathcal{T}_S(x)$ denote the tangent cone to a set $S$ at $x$. When $S$ is a $C^1$ manifold, $\mathcal{T}_S(x)$ coincides with the tangent space to $S$ at $x$; see \cite[Example~6.8]{VaAn}.
We begin by recalling the definition of Whitney $C^p$-stratification used in \cite{bolte2007clarke,davis2020stochastic}.
\begin{definition}
A Whitney $C^p$-stratification of $S\subset\reals^d$ is a finite partition of
$S$ into connected, locally closed, embedded $C^p$-submanifolds $S_k$, called
strata, satisfying the following conditions.
\begin{enumerate}[label=\textnormal{(\roman*)}]
\item \textnormal{\textbf{(Frontier)}} For $i \neq j$, if
$S_i\cap \cl S_j\neq \emptyset$, then
$S_i\subset \cl S_j$.
\item \textnormal{\textbf{(Whitney-$(a)$)}} Fix $i \neq j$. If $S_i \ni x_k\to y\in S_j$ and $\mathcal{T}_{S_i}(x_k)\to T$, one has
$\mathcal{T}_{S_j}(y)\subset T$.
\end{enumerate}
\end{definition}

Here, the convergence of tangent spaces is understood in the Grassmannian topology.
We next show that Whitney $C^p$-stratifiability is neither necessary nor sufficient for convergence in the Lipschitz pseudometric. Since one may take $f(\xi,x)=g(x)$, where $g$ is deterministic and essentially arbitrary, it is immediate that stratifiability is not necessary for the convergence of $d_X(E^\nu f,Ef)$. As the following analysis of the construction in \Cref{prop:counterex} shows, it is not sufficient either, even when the number of strata is uniformly bounded.

We first recall the construction of $f:\Xi \times \reals^2 \to \reals$ in \cite[Theorem 3]{tian2025failure} below:
\[
f(\xi, x) = \max\{g(\xi, x), 0\} + 35\|x\|^2_2,
\]
where $g(\xi, x) = x_1 + q(\xi, x_2)$ with $q:\Xi \times \reals \to \reals$. Moreover, for any $\xi \in \Xi$, the function $q(\xi, \cdot)$ is $C^{1,1}$ on $\reals$ and $C^2$ on $\reals\setminus\{0\}$. 
Without loss of generality, we remove the quadratic term and focus on $r(\xi, x) = \max\{g(\xi, x), 0\}$.
Let $I_- = (-\infty,0), I_0=\{0\}$, $I_+ = (0,\infty)$, and a piecewise affine $h:\reals^2 \to \reals$ be defined by $h(y_1,y_2) = \max\{y_1, 0\}$. 
Then, it is easy to verify that $\gph h$ admits a Whitney $C^\infty$-stratification. In particular, $\gph h$ is disjoint union of strata $\mathcal{P}=\{P_{\alpha, \beta}\}_{\alpha,\beta \in \{-,0,+\}}$, where
\[
P_{\alpha, \beta}
=
\{(y_1, y_2, h(y_1, y_2)) \mid y_1 \in I_\alpha,\ y_2 \in I_\beta\}.
\]
For any $\xi \in \Xi$, define the following mapping $\Phi_\xi:\reals^3 \to \reals^3$ as
\[
\Phi_\xi(x_1,x_2,t) = (x_1 + q(\xi, x_2), x_2, t),
\]
which is a $C^1$-diffeomorphism. Observe that $\Phi_\xi(\gph r(\xi, \cdot)) = \gph h$ and $\Phi_\xi$ is $C^2$ on $\reals \times (\reals \setminus\{0\})\times \reals$. For any $\xi \in \Xi$, consider the collection $\mathcal{Q}_\xi= \{Q_{\xi, \alpha, \beta}\}_{\alpha,\beta \in \{-,0,+\}}$, where
\[
Q_{\xi, \alpha, \beta}
=
\{(x_1, x_2, r(\xi, x_1, x_2)) \mid
x_1+q(\xi,x_2) \in I_\alpha,\ x_2 \in I_\beta\}
=
\Phi_\xi^{-1}(P_{\alpha,\beta}).
\]
The collection $\mathcal{Q}_\xi$ forms a finite
partition of $\gph r(\xi, \cdot)$ into connected, locally closed submanifolds,
and the frontier condition is inherited from the stratification of $h$ because
$\Phi_\xi$ is a homeomorphism. The collection $\mathcal{Q}_\xi$ consists of $C^2$ submanifolds and has at most nine elements for every $\xi \in \Xi$. Thus, $\mathcal{Q}_\xi$ is a finite $C^2$-stratification.

It remains to check Whitney-$(a)$. Fix two distinct indices
$(\alpha,\beta)$ and $(\alpha',\beta')$. The only nontrivial case is when $
 Q_{\xi,\alpha,\beta}\subset \cl Q_{\xi,\alpha',\beta'}$,
or equivalently, since $\Phi_\xi$ is a homeomorphism, $
 P_{\alpha,\beta}\subset \cl P_{\alpha',\beta'}.$
Let $V_{\alpha,\beta}$ and $V_{\alpha',\beta'}$ denote the constant tangent
spaces of $P_{\alpha,\beta}$ and $P_{\alpha',\beta'}$, respectively. Since
$\mathcal P$ is the Whitney stratification of $\gph h$, we have $
 V_{\alpha,\beta}\subset V_{\alpha',\beta'}.$
Take any sequence
$z_k\in Q_{\xi,\alpha',\beta'}$ such that
$z_k\to z\in Q_{\xi,\alpha,\beta}$, and suppose $
 \mathcal{T}_{Q_{\xi,\alpha',\beta'}}(z_k)\to T.$
Write $z_k=\Phi_\xi^{-1}(a_k)$ and $z=\Phi_\xi^{-1}(b)$, where
$a_k\in P_{\alpha',\beta'}$ and $b\in P_{\alpha,\beta}$. Since
$\Phi_\xi$ is a homeomorphism, $a_k\to b$. Moreover, since
$\Phi_\xi^{-1}$ is $C^1$,
\[
 \mathcal{T}_{Q_{\xi,\alpha',\beta'}}(z_k)
 =
 \nabla\Phi_\xi^{-1}(a_k)V_{\alpha',\beta'}
 \to
 \nabla\Phi_\xi^{-1}(b)V_{\alpha',\beta'}.
\]
Hence $
 T=\nabla\Phi_\xi^{-1}(b)V_{\alpha',\beta'}.$
On the other hand, $
 \mathcal{T}_{Q_{\xi,\alpha,\beta}}(z)
 =
 \nabla\Phi_\xi^{-1}(b)V_{\alpha,\beta}.$
Because $\nabla\Phi_\xi^{-1}(b)$ is an invertible linear map and
$V_{\alpha,\beta}\subset V_{\alpha',\beta'}$, we obtain
\[
 \mathcal{T}_{Q_{\xi,\alpha,\beta}}(z)
 =
 \nabla\Phi_\xi^{-1}(b)V_{\alpha,\beta}
 \subset
 \nabla\Phi_\xi^{-1}(b)V_{\alpha',\beta'}
 =
 T.
\]
This verifies Whitney-$(a)$.
\section{Applications}\label{sec:appl}

In the preceding sections, we studied the convergence of $E^\nu f$ to $Ef$ in the Lipschitz pseudometric. We now examine its consequences and properties, particularly the implications for other notions of convergence. For simplicity, we use the Euclidean norm $\|\cdot\|_2$ throughout this section.

\subsection{Uniform convergence of function values}

Uniform convergence of function values ensures consistency of global solutions in stochastic programming; see \cite[Section 9.2.6]{shapiro2021lectures}. We show below that convergence in the Lipschitz pseudometric controls the uniform distance on bounded sets.
\begin{proposition}\label{prop:ucvgt}
	Let $0 \in X \subset \reals^d$ and $h,g :\reals^d \to \reals$. For any bounded set $U\subset X$, one has
\[
\nsup_{x \in U}| h (x) - g(x) | \leq (1+\nsup_{z \in U}\|z\|_2)d_X(h,g).
\]
\end{proposition}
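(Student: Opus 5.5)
The plan is to reduce the bound to the elementary estimate $|\eval_x(b)|\le(1+\|x\|)\|b\|_{\lip}$ that already appeared in the proof of \Cref{thm:countable}, applied to the difference $\varphi=(h-g)|_X$. Since this section fixes the Euclidean norm $\|\cdot\|_2$, both the Lipschitz modulus in $d_X$ and the factor $\|z\|_2$ refer to the same norm, so no norm-equivalence constants enter.

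First, I dispose of the degenerate cases. If $d_X(h,g)=\infty$, the right-hand side is $+\infty$ (note $1+\sup_{z\in U}\|z\|_2\ge 1>0$), so there is nothing to prove; if $U=\emptyset$, the left-hand side is $-\infty$ or $0$ by convention. If $X=\{0\}$, then $U\subset\{0\}$, and the claim reads $|h(0)-g(0)|\le d_X(h,g)$, which holds by definition of $d_X$.

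Now assume $d_X(h,g)<\infty$ and fix $x\in U\subset X$. Writing $\varphi=h-g$, I will use the triangle inequality through the base point $0\in X$:
\[
|\varphi(x)|\le |\varphi(0)|+|\varphi(x)-\varphi(0)|.
\]
The first term satisfies $|\varphi(0)|\le d_X(h,g)$. For the second term with $x\neq0$, the definition of $\glip_X$ as a supremum over ordered pairs applies to both $(x,0)$ and $(0,x)$, so it controls absolute differences. This gives
\[
|\varphi(x)-\varphi(0)|\le \glip_X(h|_X-g|_X)\,\|x\|_2\le d_X(h,g)\,\|x\|_2 .
\]
Hence $|\varphi(x)|\le(1+\|x\|_2)\,d_X(h,g)\le(1+\sup_{z\in U}\|z\|_2)\,d_X(h,g)$, and taking the supremum over $x\in U$ finishes the proof. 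Boundedness of $U$ is needed only so that the constant is finite.

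The only step needing care is that $\glip_X$ is defined without absolute values. Symmetry of the index pairs resolves this, so no genuine obstacle arises.
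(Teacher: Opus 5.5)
Your proposal is correct and is essentially the paper's proof: the triangle inequality through the base point $0$, bounding $|h(0)-g(0)|$ and $\glip_X(h-g)\|x\|_2$ by $d_X(h,g)$, then taking the supremum over $U$. Your extra remarks (degenerate cases, and the symmetry of ordered pairs that lets $\glip_X$ control absolute differences) are valid details that the paper leaves implicit.
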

\begin{proof}
	For any $x \in U$, we have
\[
|h(x)-g(x)| \leq |h (0)-g(0)| + \glip_X(h-g)\|x\|_2 \leq (1+\|x\|_2)d_X(h,g).
\]
The claim follows by taking the supremum over $U$.
\end{proof}

The boundedness requirement cannot be removed. Indeed, let $d=1$, $X=\reals$, $g^\nu(x)=\frac{1}{\nu}|x|$, and $g\equiv 0$. Then, one has $
d_X(g^\nu,g)=\frac{1}{\nu}\to 0,$
whereas $
\sup_{x\in X}|g^\nu(x)-g(x)|=\infty$
for every $\nu\in\nats$.
Applying \Cref{prop:ucvgt} with $h=E^\nu f$ and $g=Ef$ shows that a Lipschitzian SLLN implies a uniform SLLN for function values on every bounded subset of $X$. 

\subsection{Uniform convergence of subdifferentials}\label{sec:subd}

We next show that the Lipschitz pseudometric controls first-order approximations, including subdifferentials. 
For a locally Lipschitz function $h:\reals^d\to\reals$, several notions of subdifferential are available; see \cite[Chapter~8]{VaAn}. In particular, the regular, limiting, and Clarke subdifferentials are defined by \cite[Definition~8.3 and 8(32)]{VaAn}
\[
\begin{aligned}
\widehat{\partial} h(x)
&=
\{
v\in\reals^d \mid 
h(x_k)\ge h(x)+\langle v,x_k-x\rangle
+o(\|x_k-x\|_2)
\ \text{as } x_k\to x
\},
\\
\partial h(x)
&=
\{
v\in\reals^d \mid
\exists\, x_k\to x,\ h(x_k)\to h(x),\
\exists\, v_k\to v,\ v_k\in\widehat{\partial} h(x_k)
\},\\
\overline{\partial} h(x)&=\con \partial h(x).
\end{aligned}
\]

We show below that the Hausdorff distances between the corresponding limiting and Clarke subdifferentials are uniformly controlled by the Lipschitz pseudometric.
\begin{proposition}[subdifferentials]\label{prop:subd}
Let $h,g:\reals^d\to\Reals$ be locally Lipschitz on an open set  $X\subset\reals^d$ with $0 \in X$. Then, we have
\[
\sup_{x \in X}\setd\big(\overline{\partial} h (x), \overline{\partial} g(x)\big) \leq \sup_{x \in  X} \setd\big(\partial h (x), \partial g(x)\big) \leq  \sup_{x \in X}\lip{} (h-g)(x) \leq d_X(h,g).
\]
\end{proposition}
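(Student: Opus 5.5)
The plan is to prove the three inequalities from right to left, fixing $x\in X$ throughout. The rightmost inequality is immediate: since $X$ is open, $x\in\nt X$, so \Cref{lem:loc-glob-lip} applied to $h-g$ gives $\lip(h-g)(x)\le\glip_X(h-g)\le d_X(h,g)$. Local Lipschitz continuity on $X$ ensures $h-g$ is real-valued near $x$, and we may modify it outside $X$ without changing $\lip$ at interior points.

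For the middle inequality, write $h=g+e$ with $e=h-g$ locally Lipschitz near $x$, and set $\kappa=\lip e(x)$. I will show $\partial h(x)\subset\partial g(x)+\kappa\ball_2$; by symmetry, since $g=h-e$ and $\lip(-e)=\lip e$, this bounds both excesses. First work with regular subgradients. For $y$ near $x$, the local modulus gives $\lip e(y)\le\kappa+\epsilon$ on a neighborhood, by upper semicontinuity of $\lip e$. I then need the sum rule $\widehat\partial(g+e)(y)\subset\widehat\partial g(y')+(\lip e(y)+\epsilon)\ball_2$ with $y'$ near $y$. This exact pointwise inclusion can fail for regular subdifferentials, so I would go directly to the limiting level and use the fuzzy/limiting sum rule \cite[Exercise 10.10]{VaAn}: $\partial(g+e)(x)\subset\partial g(x)+\partial e(x)$. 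Combined with the bound $\partial e(x)\subset\lip e(x)\,\ball_2$ \cite[Theorem 9.13]{VaAn}, this gives the desired inclusion, with the Euclidean norm being self-dual. Since $g$ is locally Lipschitz, $\partial g(x)$ is nonempty and compact, so the Hausdorff distance is controlled: $\exs(\partial h(x);\partial g(x))\le\kappa$, and symmetrically. Taking the supremum over $x$ gives the middle inequality.

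For the leftmost inequality, I use the fact that convex hulls do not increase Hausdorff distance. If $A\subset B+\kappa\ball_2$, then $\con A\subset\con(B+\kappa\ball_2)=\con B+\kappa\ball_2$, because $\ball_2$ is convex and the convex hull of a Minkowski sum is the sum of the convex hulls. Applying this with $A=\partial h(x)$, $B=\partial g(x)$ and $\kappa=\setd(\partial h(x),\partial g(x))$ (the infimum is attained by compactness, or handled by letting $\kappa\downarrow$), and symmetrically, yields $\setd(\overline\partial h(x),\overline\partial g(x))\le\setd(\partial h(x),\partial g(x))$, using $\overline\partial=\con\partial$ for locally Lipschitz functions.

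The main obstacle is the middle step: ensuring the one-sided sum rule holds with $\lip e(x)$ itself, not an enlarged constant. My first attempt is the limiting sum rule for a locally Lipschitz summand, which is exact and needs no qualification condition here, together with the Lipschitz bound on $\partial e$. If only a fuzzy version is available, I would instead prove it by hand. For $v_k\in\widehat\partial h(x_k)$, apply the regular-subgradient fuzzy sum rule to get $w_k\in\widehat\partial g(y_k)$ with $\|v_k-w_k\|_2\le\lip e(x_k')+\epsilon_k$, then pass to the limit using the upper semicontinuity of $\lip e$ and the outer semicontinuity of $\partial g$.
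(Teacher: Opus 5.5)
Your proposal is correct and follows essentially the same route as the paper. For the middle step, you combine the limiting sum rule with a locally Lipschitz summand (you cite \cite[Exercise 10.10]{VaAn}, the paper cites \cite[Corollary 10.9]{VaAn}, and either applies since locally Lipschitz functions have trivial horizon subdifferentials) with $\partial(h-g)(x)\subset\lip(h-g)(x)\ball$ from \cite[Theorem 9.13]{VaAn} and \Cref{lem:loc-glob-lip}. For the Clarke step, you prove directly via $\con(B+\kappa\ball)=\con B+\kappa\ball$ what the paper takes from \cite[Proposition 1.4]{iusem2010distances}, so your fallback ``by hand'' fuzzy-sum-rule argument is not needed.
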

\begin{proof}
Fix $x \in X$.
For the limiting case, by \cite[Corollary 10.9]{VaAn} and the fact that $h,g$ are locally Lipschitz on $X$, one has $\partial h(x) \subset \partial g(x) + \partial(h-g)(x)$. Moreover, by \cite[Theorem 9.13]{VaAn}, $\partial(h-g)(x) \subset \lip(h-g)(x)\ball$, hence $\exs(\partial h(x); \partial g(x)) \leq \lip(h-g)(x)$. The claim follows by interchanging $h$ and $g$, taking the supremum over $x\in X$, and applying \Cref{lem:loc-glob-lip}.
For the Clarke case,
using \cite[Proposition 1.4]{iusem2010distances}, we obtain $\setd\big(\overline{\partial} h (x), \overline{\partial} g(x) \big) \leq \setd\big(\partial h (x), \partial g(x) \big)$ for any $x \in X$. 
\end{proof}

Notably, the set $X$ in \Cref{prop:subd} need not be bounded.
Applying \Cref{prop:subd} to $E^\nu f$ and $Ef$ and invoking the results of \Cref{sec:lslln}, we obtain the following uniform SLLN for subdifferentials over a possibly unbounded set.

\begin{corollary}\label{coro:ulln-subd}
	Suppose that the assumptions of one of Theorems~\ref{thm:countable},~\ref{thm:lln}~or~\ref{thm:o-minimal} hold. Then, for every set $U\subset \nt X$, $\mathbb{P}$-a.s.,
\[
\lim_{\nu \to \infty}\sup_{x \in U}\setd\big(\partial E^\nu f (x), \partial E f(x)\big) = 0 \quad\text{and}\quad
\lim_{\nu \to \infty}\sup_{x \in U}\setd\big(\overline\partial E^\nu f (x), \overline\partial E f(x)\big) = 0.
\]
Suppose instead that the assumptions of \Cref{thm:lip-rate} hold. Then, for every $\delta>0$, there exists $C_\delta<\infty$ such that, for all sufficiently large $\nu\in\nats$, there is an event $\Omega_{\delta,\nu} \subset \Omega$ with $\mathbb{P}(\Omega_{\delta,\nu})\geq 1-\delta$ on which 
\[
 \sup_{x \in U}\setd\big(\overline\partial E^\nu f (x), \overline\partial E f(x)\big) \leq \sup_{x \in U}\setd\big(\partial E^\nu f (x), \partial E f(x)\big) \leq  C_\delta\nu^{-1/2}.
 \]
\end{corollary}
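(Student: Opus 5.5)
The plan is to combine \Cref{prop:subd} with the Lipschitzian SLLNs of \Cref{sec:lslln}. The one technical point is that \Cref{prop:subd} is stated for an open set on which both functions are locally Lipschitz. So I will apply it with the open set $\nt X$, and control its right-hand side through \Cref{lem:loc-glob-lip} relative to $X$ rather than $\nt X$.

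\textbf{Step 1: local Lipschitz regularity.} Assume $U\subset\nt X$ is nonempty (otherwise there is nothing to prove), so $\nt X\neq\emptyset$. Under \Cref{as:f-S}, each slice $f(\xi,\cdot)$ is $L(\xi)$-Lipschitz on $X$. Hence $E^\nu f$ is $\Ex^\nu L$-Lipschitz on $X$, and $Ef$ is $\Ex L$-Lipschitz there, since $|Ef(x)-Ef(y)|\le \Ex[|f(\bm{\xi},x)-f(\bm{\xi},y)|]\le (\Ex L)\|x-y\|$. Both functions are real-valued on $X$, as noted at the start of \Cref{sec:prf:sec:lln}. In particular, both are locally Lipschitz on the open set $\nt X$. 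Also, $\partial h(x)$ for $x\in\nt X$ depends only on $h$ near $x$, so the values of the functions outside $X$ are irrelevant.

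\textbf{Step 2: a pointwise bound on $\nt X$.} Follow the proof of \Cref{prop:subd} at a single point $x\in\nt X$, using the sum rule \cite[Corollary 10.9]{VaAn} and \cite[Theorem 9.13]{VaAn}:
\[
\setd\bigl(\overline\partial E^\nu f(x),\overline\partial Ef(x)\bigr)\le\setd\bigl(\partial E^\nu f(x),\partial Ef(x)\bigr)\le \lip(E^\nu f-Ef)(x).
\]
By \Cref{lem:loc-glob-lip}, the right-hand side is at most $\glip_X(E^\nu f-Ef)\le d_X(E^\nu f,Ef)$. This bound involves the global modulus over all of $X$, not merely over $\nt X$, which is exactly what the theorems of \Cref{sec:lslln} control. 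Note that $0\in X$ is used only in the definition of $d_X$, and $0$ need not lie in $\nt X$. Taking the supremum over $x\in U$ yields, for every $\omega$,
\[
\sup_{x\in U}\setd\bigl(\overline\partial E^\nu f(x),\overline\partial Ef(x)\bigr)\le \sup_{x\in U}\setd\bigl(\partial E^\nu f(x),\partial Ef(x)\bigr)\le d_X(E^\nu f,Ef).
\]

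\textbf{Step 3: conclude.} Under the hypotheses of \Cref{thm:countable}, \Cref{thm:lln}, or \Cref{thm:o-minimal}, $d_X(E^\nu f,Ef)\to0$ $\mathbb{P}$-a.s., and the two almost sure limits follow from the display in Step 2. Under \Cref{thm:lip-rate}, fix $\delta>0$ and take $C_\delta,\nu_0$ from that theorem. For $\nu\ge\nu_0$, set $\Omega_{\delta,\nu}=\{d_X(E^\nu f,Ef)\le C_\delta\nu^{-1/2}\}$, which has probability at least $1-\delta$, and apply the same chain of inequalities.

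The only delicate point is Step 2: extracting the pointwise estimate from \Cref{prop:subd} on $\nt X$ while bounding it by $d_X$ rather than $d_{\nt X}$, which \Cref{lem:loc-glob-lip} handles. A secondary point is measurability of $\Omega_{\delta,\nu}$. Restricting the supremum in $\glip_X$ to the countable dense set $D$, as in \Cref{sec:prf:sec:lln}, shows that $d_X(E^\nu f,Ef)$ is a random variable, which is all that is needed; the subdifferential suprema themselves need not be measurable.
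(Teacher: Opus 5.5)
Your proposal is correct and follows the paper's route: the paper obtains the corollary by applying \Cref{prop:subd} to $E^\nu f$ and $Ef$ and then invoking the Lipschitzian SLLNs of \Cref{sec:lslln} (with \Cref{thm:lip-rate} for the rate). Your pointwise use of the argument of \Cref{prop:subd} on $\nt X$, combined with \Cref{lem:loc-glob-lip} relative to $X$, carefully handles two details the paper leaves implicit: $X$ need not be open, and $0$ need not lie in $\nt X$.
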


Since both the limiting and Clarke subdifferential mappings are outer semicontinuous, the desired consistency follows directly from \Cref{coro:ulln-subd}.
Uniform SLLNs for Clarke subdifferentials have been studied extensively, whereas substantially less is known for limiting subdifferentials. A uniform law for enlarged Clarke subdifferentials was established in \cite{shapiro2007uniform} and shown in \cite{norkin-wets}, under suitable conditions, to be equivalent to a graphical convergence result. Whether the enlargement could be removed under \Cref{as:f-S} was posed as an open question in \cite[Remark~2]{shapiro2007uniform}. This question was later answered negatively in \cite{tian2025failure}, even for stochastic convex functions.

Uniform laws without enlargement have nevertheless been established for several special classes of functions; see \cite[p.~449]{shapiro2021lectures} and \cite{teran2008uniform,ruan2024subgradient}. In particular, the work \cite{teran2008uniform} introduced a separability-type condition on subdifferential mappings that guarantees uniform convergence of Clarke subdifferentials. Our \Cref{thm:countable} may be viewed as similar to, but complementary to, \cite{teran2008uniform} in the setting of Clarke subdifferentials. However, the results in \cite{teran2008uniform} do not extend directly to limiting subdifferentials, whose values need not be convex. More recently, the work \cite{ruan2024subgradient} established a uniform law for a class of convex-composite functions under a finite-VC-dimension assumption on the inner smooth mapping. Its scope, however, is restricted to subdifferentially regular functions and therefore does not cover the more general setting considered here. 

Another related work \cite{schechtman2026gradient} studies limit fields generated by gradients, Clarke Jacobians, and more general conservative mappings. Under joint definability of the parametrized family in a fixed o-minimal structure, the limit field admits a variational stratification. As noted in \cite[Remark~4.6]{schechtman2026gradient}, slicewise definability alone is insufficient for their conclusion. In contrast, \Cref{thm:o-minimal} requires only slicewise definability of $f$, provided that the relevant structures have countable languages. Recently, the work \cite{areces2026finding} develops an algorithmic approach to finding stationary points of stochastic convex problems without resorting to surrogate stationarity measures, such as small gradients of the Moreau envelope. In this sense, their approach avoids the need for a uniform SLLN for subdifferentials, but fundamentally relies on convexity.

A natural question is whether an analogue of \Cref{coro:ulln-subd} holds for regular subdifferentials. Such a result is delicate and may fail in general because the regular subdifferential can be empty. Nevertheless, arguments similar to those in \Cref{sec:prf:sec:lln} could yield uniform convergence for other first-order approximation objects, including generalized directional derivatives and metric slopes.

We conclude this subsection with the following result, which extends \cite[Theorem~1]{ruan2024subgradient} to more general Lipschitz functions and shows that the bound in terms of the local Lipschitz modulus in \Cref{prop:subd} is sharp for continuously differentiable functions.
\begin{proposition}
	Suppose that $h,g:\reals^d \to \reals$ are Lipschitz  on an open  $X \subset \reals^d$. Let $D \subset X$ be the set of points where $h$ and $g$ are both differentiable. Then, $D$ has full measure and 
	\[
	\sup_{x \in X}\setd\big( \overline\partial h (x), \overline\partial g(x)\big)\leq \sup_{x \in X} \lip{}(h-g)(x) \leq \sup_{x \in D} \|\nabla h(x) - \nabla g(x)\|_2.
	\]
	If $h$ and $g$ are continuously differentiable on $X$, then both inequalities are equalities.
\end{proposition}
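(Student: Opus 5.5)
The plan has three parts: the full-measure claim, the two inequalities, and the equality case.

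\emph{Full measure.} By Rademacher's theorem, $h$ and $g$ are each differentiable at almost every point of the open set $X$, so $D$ has full measure in $X$.

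\emph{First inequality.} This follows from \Cref{prop:subd}, which already gives $\setd(\overline\partial h(x),\overline\partial g(x))\le \lip(h-g)(x)$ pointwise.

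\emph{Second inequality.} Fix $x\in X$ and put $M=\sup_{z\in D}\|\nabla h(z)-\nabla g(z)\|_2$, assuming $M<\infty$. Set $e=h-g$, which is Lipschitz near $x$. I would use the gradient formula for the Clarke subdifferential of a locally Lipschitz function \cite[Theorem 9.61]{VaAn}: $\overline\partial e(x)=\con\{\lim\nabla e(x_k)\mid x_k\to x,\ x_k\in D_e\}$, where $D_e$ may be any full-measure set of differentiability points of $e$. Clarke's theorem allows excluding any null set, so we may take $D_e=D$. At points of $D$ we have $\nabla e=\nabla h-\nabla g$, hence $\overline\partial e(x)\subset M\ball$.

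Next, $\lip e(x)=\max\{\|v\|_2\mid v\in\partial e(x)\}$ \cite[Theorem 9.13]{VaAn}. Since $\partial e(x)\subset\overline\partial e(x)$, this gives $\lip e(x)\le M$.

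The main obstacle is justifying that the excluded null set may be arbitrary. Rather than cite the Clarke result, I would prove it by a Fubini argument: for $y$ near $x$ and almost every small translate of a segment $[y,z]$, the segment meets $D$ in almost every point. Then $e(z)-e(y)=\int_0^1\langle\nabla e(y+t(z-y)),z-y\rangle\,dt$, which bounds the difference quotient by $M$. By continuity of $e$, the bound extends to all nearby pairs, so $\lip e(x)\le M$.

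\emph{Equality for $C^1$ functions.} If $h,g\in C^1$ on $X$, then $D=X$ and $\overline\partial h(x)=\{\nabla h(x)\}$, $\overline\partial g(x)=\{\nabla g(x)\}$. Hence the left side equals $\sup_x\|\nabla h(x)-\nabla g(x)\|_2$, which is also the right side. The sandwiched middle term must then equal both, so both inequalities are equalities. Alternatively, $\lip e(x)=\|\nabla e(x)\|_2$ directly for $C^1$ functions \cite[Theorem 9.7]{VaAn}.
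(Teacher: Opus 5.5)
Your proposal is correct and follows the paper's proof. Both use Rademacher's theorem for the full-measure claim, then apply \cite[Theorem 9.61]{VaAn} to $h-g$ with the differentiability set restricted to $D$ (a null-set exclusion) to get $\overline\partial(h-g)(x)\subset M\ball$, and then \cite[Theorem 9.13]{VaAn} to turn this into $\lip(h-g)(x)\le M$; your explicit handling of the first inequality via \Cref{prop:subd} and of the $C^1$ equality case (where both Clarke subdifferentials are singletons and $D=X$, so the outer terms coincide) fills in steps the paper leaves implicit, and your Fubini argument is a valid self-contained substitute for the null-set exclusion in Theorem 9.61.
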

\begin{proof}
By Rademacher's theorem \cite[Theorem 9.60]{VaAn}, the set $D = \dom(\nabla h)\cap \dom(\nabla g)\cap X$ has full Lebesgue measure in $X$.
Let $f=h-g$, hence $f$ is Lipschitz on $X$ and $\nabla f(x) = \nabla h(x) - \nabla g(x)$ for any $x \in D$. Using \cite[Theorem 9.61]{VaAn}, for any $y\in X$, one has $
\nsup_{v \in \overline{\partial}f(y)}  \|v\|_2 \leq \nsup_{x \in D} \|\nabla f(x)\|_2= \nsup_{x \in D} \|\nabla h(x) - \nabla g(x)\|_2.$
By \cite[Theorem 9.13]{VaAn}, we obtain $\lip f(y) = \nsup_{v \in \overline{\partial}f(y)} \|v\|_2$, which completes the proof.
\end{proof}

\subsection{Finite-sample identification of solutions}

Finite-sample identification in stochastic programming \cite{shapiro2000rate,kleywegt2002sample,shapiro2002conditioning} is another application of our Lipschitzian SLLNs.
Under mild assumptions, almost surely, every accumulation point of global solutions to the SAA problems in \eqref{eq:saa} is a global solution to the true problem in \eqref{eq:primal}; see \cite[Theorem 2.3]{artstein1995consistency}. A stronger finite-sample identification phenomenon is discussed in \cite[Section~5.33]{shapiro2021lectures}: under suitable polyhedral convexity and sharpness assumptions, and when the distribution has finite support, for all sufficiently large $\nu\in\nats$,
\[
\nargmin_{x\in X}E^\nu f(x)
\subset
\nargmin_{x\in X}Ef(x), \qquad \mathbb{P}\text{-a.s.}
\]
Thus, after finitely many samples, every solution of the SAA problem is an exact solution of the true problem.
The sharpness condition is defined as follows. We say that the minimizer set of $Ef$ over $X$ is \emph{sharp} if there exists $c>0$ such that
\[
Ef(y)
\geq
\ninf_{x\in X}Ef(x)
+
c\,\dist(y,\nargmin_{x\in X}Ef(x)),
\qquad
\forall y\in X;
\]
see \cite[Equation~5.138]{shapiro2021lectures} and \cite[Assumption~(A)]{shapiro2000rate}.
This condition is closely related to the notion of weak sharp minima in \cite{burke1993weak}.

An essential ingredient in the analyses of \cite{shapiro2000rate,kleywegt2002sample,shapiro2002conditioning} is a uniform SLLN for convex subdifferentials under the stated assumptions; see, e.g., \cite[Lemma~2.4]{shapiro2000rate}. Our Lipschitzian SLLNs yield the following analogous finite-sample identification result for a substantially broader class of problems, without requiring polyhedrality, finite support of the distribution, or convexity of the objective.

\begin{proposition}[global solution]\label{prop:exact}
Suppose that the assumptions of one of Theorems~\ref{thm:countable},~\ref{thm:lln}~or~\ref{thm:o-minimal} hold. If the minimizer set of $Ef$ over $X$ is nonempty and sharp, then,  for all sufficiently large $\nu\in\nats$, 
\[
\nargmin_{x \in X} E^{\nu}f(x) \subset \nargmin_{x \in X} Ef(x), \qquad \mathbb{P}\text{-a.s.}
\] 
\end{proposition}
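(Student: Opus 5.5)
The plan is to work pathwise on the probability-one event where $d_X(E^\nu f,Ef)\to0$, which is supplied by \Cref{thm:countable}, \Cref{thm:lln}, or \Cref{thm:o-minimal}. On that event, choose $\nu_0$ with $d_X(E^\nu f,Ef)<c$ for all $\nu\ge\nu_0$, where $c>0$ is the sharpness constant. Write $S=\nargmin_{x\in X}Ef(x)\neq\emptyset$ and $m=\ninf_X Ef$. The key observation is that on $X$ the difference $e^\nu=(E^\nu f-Ef)|_X$ is $\glip_X e^\nu$-Lipschitz, with $\glip_X e^\nu\le d_X(E^\nu f,Ef)<c$. Note that only the Lipschitz-modulus part of $d_X$ is used; the value at $0$ is irrelevant, since adding a constant does not change minimizers. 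This is exactly why the Lipschitz pseudometric, rather than uniform convergence, is the right tool: a uniform error would only give approximate identification.

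Fix $\nu\ge\nu_0$ and let $y\in\nargmin_X E^\nu f$; if this set is empty, the inclusion holds trivially. The goal is to show $y\in S$. Suppose instead that $y\notin S$, so $\dist(y,S)>0$. Take $x\in S$ with $\|y-x\|_2\le\dist(y,S)(1+\eta)$ for a small $\eta>0$; no closedness of $S$ is needed, which is why an approximate nearest point is used. Then
\[
E^\nu f(y)-E^\nu f(x) = Ef(y)-Ef(x) + e^\nu(y)-e^\nu(x) \ge c\,\dist(y,S) - \glip_X e^\nu\,(1+\eta)\dist(y,S),
\]
because $Ef(x)=m$ and sharpness gives $Ef(y)-m\ge c\,\dist(y,S)$. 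Choose $\eta$ with $(1+\eta)\glip_X e^\nu<c$, which is possible because $\glip_X e^\nu<c$ strictly. The right-hand side is then strictly positive, so $E^\nu f(x)<E^\nu f(y)$ with $x\in X$. This contradicts the optimality of $y$, and hence $y\in S$.

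The step requiring the most care is bookkeeping rather than analysis. First, $Ef$ must be real-valued on $X$; this is shown at the start of \Cref{sec:prf:sec:lln}. Second, the differences must be handled as real numbers, so that the identity $E^\nu f-Ef=e^\nu$ on $X$ is legitimate. Third, $\nu_0$ depends on $\omega$, which is exactly what the statement ``for all sufficiently large $\nu$, $\mathbb{P}$-a.s.'' means. No compactness or convexity is needed, because strict inequality in the Lipschitz control absorbs the approximation in the choice of $x$.
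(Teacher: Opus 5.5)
Your argument is the paper's in substance. On the probability-one event where $\glip_X(E^\nu f-Ef)\to0$, once this modulus falls below the sharpness constant $c$, sharpness forces every SAA minimizer into $S$. The paper writes this directly as $E^\nu f(y)-\ninf_{X}E^\nu f\geq(c-\glip_X(E^\nu f-Ef))\dist(y,S)$ for all $y\in X$. It deduces $\dist(y,S)=0$ for SAA minimizers and then concludes $y\in S$ because $S$ is closed relative to $X$. Your contradiction argument with an approximate nearest point is the same computation.

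The gap is at the step you describe as closedness-free: ``$y\notin S$, so $\dist(y,S)>0$'' holds only if $S$ is closed relative to $X$. Sharpness does not give this, since a point $y\in(X\cap\cl S)\setminus S$ satisfies the sharpness inequality trivially with $\dist(y,S)=0$. The approximate nearest point only handles non-attainment of the distance, not its positivity: if $\dist(y,S)=0$, no $x\in S$ satisfies $\|y-x\|_2\le(1+\eta)\dist(y,S)$.

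Either of two short fixes works.
\begin{itemize}
\item As in the paper, note that $Ef$ is $(\Ex L)$-Lipschitz on $X$ by \Cref{as:f-S}(iii). Hence $S=\{x\in X\mid Ef(x)\le m\}$ is closed relative to $X$.
\item To genuinely avoid closedness, observe that every $y\in X\setminus S$ satisfies $Ef(y)-m>\glip_X e^\nu\cdot\dist(y,S)$. This follows from sharpness and $\glip_X e^\nu<c$ when $\dist(y,S)>0$, and from $Ef(y)>m$ when $\dist(y,S)=0$. Then pick $x\in S$ with $\glip_X e^\nu\,\|y-x\|_2<Ef(y)-m$.
\end{itemize}
Finally, fix $\eta$ before choosing $x$, not afterwards.
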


\begin{proof}
Let $g^\nu =E^\nu f- Ef$ and $A=\argmin_{x\in X}Ef(x)$. For any $y\in X$ and $x\in A$, one has
\[
E^\nu f(y)-E^\nu f(x)
=Ef(y)-Ef(x)+g^\nu(y)-g^\nu(x)
\geq c \dist(y,A)-\glip_X(g^\nu)\|y-x\|_2.
\]
By $\inf_{z\in X}E^\nu f(z)\leq E^\nu f(x)$  and then taking the supremum over $x\in A$ gives
\[
E^\nu f(y)-\ninf_{z\in X}E^\nu f(z)
\geq
(c-\glip_X(g^\nu))\dist(y,A).
\]
By Theorems~\ref{thm:countable},~\ref{thm:lln}~or~\ref{thm:o-minimal}, $\glip_X(g^\nu)\to0$ $\mathbb{P}$-a.s., so eventually $c>\glip_X(g^\nu)$. 
Now let $y\in\argmin_{x\in X}E^\nu f(x)$. Then the left-hand side above is zero, and therefore
$\dist(y,A)=0$. Since $Ef$ is continuous on $X$, the set
$A$ is closed relative to $X$. Thus $y\in A$.
\end{proof}


Unlike the local consequences established in \Cref{sec:subd}, the finite-sample identification result in \Cref{prop:exact} relies essentially on convergence of the global Lipschitz modulus $\glip_X(E^\nu f-Ef)$. For nonconvex problems, however, obtaining global solutions to the SAA problem can be difficult, motivating an analogous result for stationary points. The uniform convergence of subdifferentials in \Cref{coro:ulln-subd} yields such an identification result under a sharpness-type condition.

\begin{proposition}[stationary points]\label{coro:exact-stat}
Suppose that the assumptions of one of Theorems~\ref{thm:countable}, \ref{thm:lln} or~\ref{thm:o-minimal} hold and that, for $S \subset U \subset \nt X$, one has  
\begin{equation}
	\ninf_{x\in U\setminus S}
\dist\bigl(0,\partial Ef(x)\bigr)
\geq \eta
\end{equation}
for some tolerance $\eta>0$.
Then, for any $\epsilon^\nu\downarrow 0$ and all sufficiently large $\nu\in\nats$, $\mathbb{P}$-a.s., one has
\[
S^\nu=\bigl\{
x\in U
\bigm| \dist\bigl(0,\partial E^\nu f(x)\big)
\leq \epsilon^\nu
\bigr\} \subset S.
\]
\end{proposition}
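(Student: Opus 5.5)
The argument combines the uniform convergence of limiting subdifferentials from \Cref{coro:ulln-subd} with a triangle-type inequality for the distance from $0$ to a set. Throughout, fix the probability-one event on which $\sup_{x\in U}\setd\bigl(\partial E^\nu f(x),\partial Ef(x)\bigr)\to0$. \Cref{coro:ulln-subd} supplies this event under the hypotheses of any of Theorems~\ref{thm:countable}, \ref{thm:lln} or~\ref{thm:o-minimal}, because $U\subset\nt X$. On this event, choose $\nu_0$ such that for all $\nu\geq\nu_0$,
\[
\delta^\nu:=\sup_{x\in U}\setd\bigl(\partial E^\nu f(x),\partial Ef(x)\bigr)<\eta/2
\qquad\text{and}\qquad
\epsilon^\nu<\eta/2 .
\]
The second condition is possible since $\epsilon^\nu\downarrow0$.

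For nonempty sets $A,B\subset\reals^d$, I will use the elementary bound
\[
\dist(0,B)\leq\dist(0,A)+\exs(A;B)\leq\dist(0,A)+\setd(A,B).
\]
To prove it, take $a\in A$ with $\|a\|_2$ close to $\dist(0,A)$, then $b\in B$ with $\|a-b\|_2$ close to $\dist(a,B)\leq\exs(A;B)$, and apply the triangle inequality. The sets involved are nonempty: at points of $\nt X$ both $E^\nu f$ and $Ef$ are locally Lipschitz under \Cref{as:f-S}, and a locally Lipschitz function has nonempty compact limiting subdifferentials \cite[Theorem 9.13]{VaAn}. Hence all the distances and Hausdorff distances above are finite.

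Now fix $\nu\geq\nu_0$ and suppose, toward a contradiction, that some $x\in S^\nu$ does not lie in $S$. Then $x\in U\setminus S$, so the sharpness hypothesis gives $\dist(0,\partial Ef(x))\geq\eta$. Applying the bound above with $A=\partial E^\nu f(x)$ and $B=\partial Ef(x)$ gives
\[
\eta\leq\dist\bigl(0,\partial Ef(x)\bigr)\leq\dist\bigl(0,\partial E^\nu f(x)\bigr)+\delta^\nu\leq\epsilon^\nu+\delta^\nu<\eta,
\]
which is a contradiction. Therefore $S^\nu\subset S$ for every $\nu\geq\nu_0$, and $\nu_0$ depends on $\omega$ only through the fixed probability-one event. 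This proves the statement almost surely.

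No step here is a real obstacle. The only point requiring care is that the uniform bound $\delta^\nu$ must be taken over all of $U$, which is why \Cref{coro:ulln-subd} must be used on the whole set $U\subset\nt X$ rather than pointwise. Measurability of $S^\nu$ plays no role, since the inclusion is deterministic on the fixed event.
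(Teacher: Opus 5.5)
Your proposal is correct and matches the paper's proof: both take $\rho^\nu=\sup_{x\in U}\setd\bigl(\partial E^\nu f(x),\partial Ef(x)\bigr)\to0$ almost surely from \Cref{coro:ulln-subd}, and then use $\dist\bigl(0,\partial Ef(y)\bigr)\leq\dist\bigl(0,\partial E^\nu f(y)\bigr)+\rho^\nu\leq\epsilon^\nu+\rho^\nu<\eta$ for large $\nu$ to rule out $y\in U\setminus S$. Your extra checks (nonemptiness and compactness of the limiting subdifferentials on $\nt X$, and the proof of the distance inequality) make explicit what the paper leaves implicit; one small wording point is that $\nu_0$ genuinely depends on $\omega$, which is exactly what ``for all sufficiently large $\nu$, $\mathbb{P}$-a.s.'' allows.
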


\begin{proof}
By \Cref{coro:ulln-subd}, $\mathbb{P}$-a.s., 
$\rho^\nu=
\nsup_{x\in U}
\setd\bigl(
\partial E^\nu f(x),
\partial Ef(x)
\bigr)
\to 0.$
For every $y\in S^\nu$,
\[
\dist\bigl(0,\partial Ef(y)\bigr)
\leq
\dist\bigl(0,\partial E^\nu f(y)\bigr)
+
\rho^\nu
\leq
\epsilon^\nu+\rho^\nu.
\]
For all sufficiently large $\nu$, $\epsilon^\nu + \rho^\nu < \eta$. Hence $y\notin U\setminus S$, and therefore $y\in S$.
\end{proof}

\subsection{Permanence properties}\label{sec:permanence}

We record several operations under which convergence in the Lipschitz pseudometric is preserved. Linear operations, restrictions, and Lipschitz changes of variables require no additional regularity. Smooth outer compositions are also admissible on bounded sets. These permanence properties provide calculus rules that carry the above results to broader classes of problems. 

\begin{proposition}[permanence]\label{prop:permanence}
Let $0\in X\subset\reals^d$, and suppose that, for every $i\in[m]$, the functions $h_i^\nu,h_i\in\Lip(X)$ satisfy $
d_X(h_i^\nu,h_i)\to 0$, as $\nu \to \infty$.
Then the following statements hold.
\begin{enumerate}[label=\textnormal{(\alph*)}]
\item For any $a_1,\ldots,a_m\in\reals$, one has $
d_X(
\nsum_{i=1}^m a_ih_i^\nu,
\nsum_{i=1}^m a_ih_i
)
\to 0,$ as $\nu \to \infty$.

\item If $0\in Z\subset X$, then $
d_Z(h_i^\nu,h_i)\to 0$ as $\nu \to \infty$,
for every $i\in[m]$.

\item Let $0\in Y\subset\reals^p$, and let $T:Y\to X$ be Lipschitz with $T(0)=0$. Then $
d_Y(h_i^\nu\circ T,h_i\circ T)\to 0$
for every $i\in[m]$.

\item Suppose, in addition, that $X$ is bounded. Define $H^\nu=(h_1^\nu,\ldots,h_m^\nu)$ and $H=(h_1,\ldots,h_m)$. Then, for every $C^1$ function $G:\reals^m\to\reals$, one has $
d_X(G\circ H^\nu,G\circ H)\to 0$ as $\nu \to \infty$.
\end{enumerate}
\end{proposition}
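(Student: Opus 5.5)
Parts (a)--(c) follow directly from the fact that $d_X$ is the seminorm $\|(\cdot)|_X\|_{\lip}$ on $\Lip(X)$; part (d) is where the work lies.

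For (a), $d_X(\sum_i a_ih_i^\nu,\sum_i a_ih_i)=\|\sum_i a_i(h_i^\nu-h_i)|_X\|_{\lip}\le \sum_i|a_i|\,d_X(h_i^\nu,h_i)\to0$ by the triangle inequality and homogeneity.

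For (b), both quantities defining $d_Z$ are dominated by those defining $d_X$. Since $0\in Z\subset X$, the value at $0$ is unchanged, and $\glip_Z(h_i^\nu-h_i)\le\glip_X(h_i^\nu-h_i)$ because the supremum is taken over fewer pairs.

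For (c), let $\kappa$ be the Lipschitz constant of $T$ and put $u^\nu=h_i^\nu-h_i$. We have $u^\nu(T(0))=u^\nu(0)$. For $y\ne y'$ with $T(y)\ne T(y')$,
\[
|u^\nu(T(y))-u^\nu(T(y'))|\le \glip_X(u^\nu)\,\|T(y)-T(y')\|\le\kappa\,\glip_X(u^\nu)\|y-y'\|,
\]
and the left side vanishes when $T(y)=T(y')$. Hence $d_Y(h_i^\nu\circ T,h_i\circ T)\le\max\{1,\kappa\}\,d_X(h_i^\nu,h_i)\to0$.

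For (d), let $R=\sup_{x\in X}\|x\|$. By Proposition~\ref{prop:ucvgt}, $\sup_X|h_i^\nu-h_i|\le(1+R)d_X(h_i^\nu,h_i)\to0$. Each $h_i$ is Lipschitz on the bounded set $X$, hence bounded there. So for large $\nu$ the values $H^\nu(x)$ and $H(x)$, $x\in X$, lie in a fixed compact convex set $K\subset\reals^m$ (a large ball). Also $\glip_X h_i^\nu\le\glip_X h_i+d_X(h_i^\nu,h_i)$, which gives a uniform Lipschitz bound $M$ for $H^\nu$ and $H$ on $X$.

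Fix $x\ne y$ in $X$. Writing $G(b)-G(a)=\int_0^1\nabla G(a+t(b-a))\,dt\cdot(b-a)$, we decompose
\[
[G(H^\nu(x))-G(H^\nu(y))]-[G(H(x))-G(H(y))]=\langle A^\nu,\,(H^\nu-H)(x)-(H^\nu-H)(y)\rangle+\langle A^\nu-A,\,H(x)-H(y)\rangle,
\]
where $A^\nu=\int_0^1\nabla G(H^\nu(y)+t(H^\nu(x)-H^\nu(y)))\,dt$ and $A$ is the same expression with $H$ in place of $H^\nu$.

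The first term is bounded by $\sup_K\|\nabla G\|\cdot\sum_i\glip_X(h_i^\nu-h_i)\,\|x-y\|$. In the second term, $|H(x)-H(y)|\le M\|x-y\|$. The integrands of $A^\nu$ and $A$ are evaluated at points of $K$ that differ by at most $\sup_X\|H^\nu-H\|$, uniformly in $x,y,t$. Because $\nabla G$ is uniformly continuous on the compact set $K$, we get $\|A^\nu-A\|\le\omega(\sup_X\|H^\nu-H\|)\to0$, where $\omega$ is the modulus of continuity of $\nabla G$ on $K$.

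Dividing by $\|x-y\|$ and taking the supremum over $x\ne y$ gives $\glip_X(G\circ H^\nu-G\circ H)\to0$. At $0$, $|G(H^\nu(0))-G(H(0))|\to0$ by continuity of $G$. Together these give $d_X(G\circ H^\nu,G\circ H)\to0$.

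The main obstacle is this uniform-in-$(x,y)$ control of $A^\nu-A$. It needs both the uniform convergence of $H^\nu$ to $H$ and the confinement of all values to a compact set, and both of these come from the boundedness of $X$.
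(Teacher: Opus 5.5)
Your proposal is correct and follows essentially the same route as the paper. Parts (a)--(c) come from homogeneity, the triangle inequality, and the bound $d_Y(h_i^\nu\circ T,h_i\circ T)\le\max\{1,L_T\}\,d_X(h_i^\nu,h_i)$. Part (d) combines uniform convergence from Proposition~\ref{prop:ucvgt}, a common compact convex set $K$, and the fundamental-theorem-of-calculus estimate $(\sup_K\|\nabla G\|)\cdot\glip_X(H^\nu-H)+\omega(\sup_X\|H^\nu-H\|)\cdot\glip_X H$. The only cosmetic differences are that you prove (b) directly rather than as a special case of (c), and you write out the integral decomposition that the paper leaves implicit.
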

\begin{proof}
Part~(a) follows from the triangle inequality and homogeneity of $\|\cdot\|_{\lip}$. Part~(b) is a special case of (c). For part~(c), let $e^\nu=h_i^\nu-h_i$ and $L_T$ be a Lipschitz constant of map $T$ on the set $Y$. Since $T(0)=0$, one has $|e^\nu(T(0))|=|e^\nu(0)|$ and $\glip_Y(e^\nu\circ T)\leq \glip_X(e^\nu)L_T$. Hence $d_Y(h_i^\nu\circ T,h_i\circ T)\leq \max\{1,L_T\}d_X(h_i^\nu,h_i)\to0 $.
 For part~(d), since $X$ is bounded, \Cref{prop:ucvgt} gives
$\delta_i^\nu=\sup_{x\in X}|h_i^\nu(x)-h_i(x)|\to0$ for every $i\in[m]$, while
$L_i^\nu=\glip_X(h_i^\nu-h_i)\to0$. Let
$\delta^\nu=(\sum_{i=1}^m(\delta_i^\nu)^2)^{1/2}$,
$L^\nu=(\sum_{i=1}^m(L_i^\nu)^2)^{1/2}$, and
$L=(\sum_{i=1}^m\glip_X(h_i)^2)^{1/2}$.
Then $\delta^\nu\to0$ and $L^\nu\to0$. The ranges of $H^\nu$ and $H$ are eventually contained in a common compact convex set $K$. Let
$M=\sup_{z\in K}\|\nabla G(z)\|_2$, and let $\beta$ be a modulus of continuity of $\nabla G$ on $K$. By the fundamental theorem of calculus, for all $x,y\in X$, 
\[
|G(H^\nu(x))-G(H(x))-G(H^\nu(y))+G(H(y))|
\leq
(ML^\nu+\beta(\delta^\nu)L)\|x-y\|_2.
\]
Moreover, $|G(H^\nu(0))-G(H(0))|\leq M\delta^\nu$. Hence $d_X(G\circ H^\nu,G\circ H)\to0$.
\end{proof}

For illustration, let $h^\nu,h,g^\nu,g \in \Lip(\reals^d)$ satisfying $d_{\reals^d}(h^\nu,h)\to0$ and $d_{\reals^d}(g^\nu,g)\to0$. By \Cref{prop:subd} and the stability of the Hausdorff distance under Minkowski addition,
\[
\nsup_{x\in\reals^d}\setd\bigl(\partial h^\nu(x)+\partial g^\nu(x),\partial h(x)+\partial g(x)\bigr)\to 0.
\]
This conclusion is weaker than a direct comparison of the subdifferentials of the sums, since, in general, $\partial(h+g)(x)\subset\partial h(x)+\partial g(x)$. By \Cref{prop:permanence}, however, $d_{\reals^d}(h^\nu+g^\nu,h+g)\to0$, and hence \Cref{prop:subd} yields
\[
\nsup_{x\in\reals^d} \setd\bigl(\partial(h^\nu+g^\nu)(x),\partial(h+g)(x)\bigr)\to 0.
\]
Thus, the Lipschitz pseudometric provides a convenient surrogate for controlling Hausdorff distances between subdifferentials, owing to its favorable calculus properties.

So far, we have considered only the unconstrained case. Let $\mathcal{N}_X(x)$ be the limiting normal cone to a set $X$ at $x$; see \cite[Definition 6.3]{VaAn}. We next incorporate the constraint set in \eqref{eq:primal}, as illustrated by the following result.
\begin{proposition}[constrained problem]
Let $Y\subset \reals^d$, with $0 \in Y$, be an open superset of a nonempty closed set $X$. For $h^\nu,h : \reals^d \to \reals$, locally Lipschitz on $Y$,  one has
\[
\begin{aligned}
\nsup_{x\in X}
\setd\bigl(
\partial h^\nu(x)+\mathcal{N}_X(x),
\partial h(x)+\mathcal{N}_X(x)
\bigr)
& \leq d_Y(h^\nu, h), \\
\nsup_{x\in X}
\setd\bigl(
\partial (h^\nu+\iota_X)(x),
\partial (h + \iota_X)(x)
\bigr)
& \leq d_Y(h^\nu, h).
\end{aligned}
\]
\end{proposition}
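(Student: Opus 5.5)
Fix $x\in X$ and write $\rho=\lip(h^\nu-h)(x)$. The plan is to bound both Hausdorff distances at $x$ by $\rho$, and then pass to $d_Y(h^\nu,h)$ using \Cref{lem:loc-glob-lip}. Since $Y$ is open and $X\subset Y$, every $x\in X$ lies in $\nt Y$. Hence \Cref{lem:loc-glob-lip} gives $\rho\leq\glip_Y(h^\nu-h)\leq d_Y(h^\nu,h)$.

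\textbf{First inequality.} I would use the proof of \Cref{prop:subd} with $Y$ in place of $X$. The sum rule \cite[Corollary 10.9]{VaAn} for locally Lipschitz functions gives $\partial h^\nu(x)\subset\partial h(x)+\partial(h^\nu-h)(x)$. By \cite[Theorem 9.13]{VaAn}, $\partial(h^\nu-h)(x)\subset\rho\ball$. Adding $\mathcal{N}_X(x)$ to both sides yields
\[
\partial h^\nu(x)+\mathcal{N}_X(x)\subset\partial h(x)+\mathcal{N}_X(x)+\rho\ball .
\]
Hence $\exs\bigl(\partial h^\nu(x)+\mathcal{N}_X(x);\partial h(x)+\mathcal{N}_X(x)\bigr)\leq\rho$. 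Swapping $h^\nu$ and $h$ gives the other excess, and the bound on the Hausdorff distance follows. Both sets are nonempty, since $0\in\mathcal{N}_X(x)$ and $X$ is closed, so $\setd$ is well defined.

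\textbf{Second inequality.} The step needing care is comparing $\partial(h^\nu+\iota_X)(x)$ with $\partial(h+\iota_X)(x)$ directly, without first splitting into $\partial h^\nu+\mathcal{N}_X$. The sum of a function that is merely locally Lipschitz and an indicator only satisfies the inclusion $\partial(h+\iota_X)\subset\partial h+\mathcal{N}_X$, not equality, so the first inequality does not transfer. Instead I would write $h^\nu+\iota_X=(h+\iota_X)+(h^\nu-h)$. Here $h+\iota_X$ is proper and lsc near $x$, because $X$ is closed and $h$ is continuous on $Y$. The function $h^\nu-h$ is locally Lipschitz (strictly continuous) at $x$. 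The sum rule \cite[Exercise 10.10]{VaAn} then gives
\[
\partial(h^\nu+\iota_X)(x)\subset\partial(h+\iota_X)(x)+\partial(h^\nu-h)(x)\subset\partial(h+\iota_X)(x)+\rho\ball .
\]
Applying the same rule with the roles of $h^\nu$ and $h$ exchanged gives the reverse inclusion.

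\textbf{Main obstacle.} I expect the main difficulty to be nonemptiness of these sets, which $\setd$ requires. Here $\partial(h+\iota_X)(x)$ is nonempty because $0\in\partial\iota_X(x)$ and, more precisely, $\partial(h+\iota_X)(x)\neq\emptyset$ whenever $h+\iota_X$ is locally Lipschitz relative to $X$ around $x$. I would rely on \cite[Corollary 8.10]{VaAn} if needed, or adopt the convention that $\setd(\emptyset,\emptyset)=0$. The inclusions above show that one set is empty exactly when the other is, so this case does not break the bound.

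To finish, take the supremum over $x\in X$ in both bounds and apply \Cref{lem:loc-glob-lip} as above to replace $\rho$ by $d_Y(h^\nu,h)$.
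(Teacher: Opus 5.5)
Your proof is correct and follows the paper's argument. The Lipschitz sum rule together with \cite[Theorem 9.13]{VaAn} gives $\partial h^\nu(x)\subset\partial h(x)+\rho\ball$ and its symmetric counterpart, and adding $\mathcal{N}_X(x)$ gives the first bound. The same perturbation argument applied to $h^\nu+\iota_X=(h+\iota_X)+(h^\nu-h)$ gives the second. The only difference is cosmetic: you bound by $\lip(h^\nu-h)(x)$ and then invoke \Cref{lem:loc-glob-lip}, whereas the paper bounds by $\glip_Y(h^\nu-h)$ directly.

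On nonemptiness, which the paper leaves implicit, your stated reason does not work. The fact $0\in\partial\iota_X(x)$ does not by itself give $\partial(h+\iota_X)(x)\neq\emptyset$, because the sum rule is only an inclusion. The claim is nevertheless true. If $L$ is a local Lipschitz constant of $h$, then $x$ locally minimizes $h+\iota_X+L\|\cdot-x\|_2$. Fermat's rule and the sum rule then give $0\in\partial(h+\iota_X)(x)+L\ball$, so your fallback convention for empty sets is never needed.
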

\begin{proof}
Since the
claim is trivial when $d_Y(h^\nu,h)=\infty$, suppose that it is finite.
Then $\glip_Y (h^\nu-h)\leq d_Y(h^\nu,h)$. By
\cite[Corollary~10.9]{VaAn} and \cite[Theorem~9.13]{VaAn}, for every
$x\in X$, $
\partial h^\nu(x)
\subset \partial h(x)+\glip_Y(h^\nu - h)\ball
$. Interchanging $h^\nu$ and $h$ gives $\partial h(x)\subset \partial h^\nu(x)+\glip_Y(h^\nu-h)\ball$.
Adding $\mathcal{N}_X(x)$ to both inclusions and using \cite[Example~4.13, equation~(4)(5)]{VaAn} yields the first inequality.

Now let $H^\nu=h^\nu+\iota_X$ and $H=h+\iota_X$. Since $X$ is nonempty and closed and $h^\nu,h$ are continuous on $Y$, the functions $H^\nu$ and $H$ are proper and lower semicontinuous on a neighborhood of any $x\in X$. Applying the same argument as in the first claim to $H^\nu=H+(h^\nu-h)$ and $H=H^\nu+(h-h^\nu)$ yields the second inequality.
\end{proof}

Therefore, our previous consistency results extend straightforwardly to the constrained setting. We omit the formal statement for brevity.

	\bibliography{ref}

@book{van1998tame,
  title={Tame Topology and O-minimal Structures},
  author={van den Dries, L.},
  volume={248},
  year={1998},
  publisher={Cambridge University Press}
}

@article{laskowski1992vapnik,
  title={{V}apnik-{C}hervonenkis classes of definable sets},
  author={Laskowski, M. C.},
  journal={Journal of the London Mathematical Society},
  volume={2},
  number={2},
  pages={377--384},
  year={1992},
  publisher={London Mathematical Society}
}

@article{mei2018landscape,
  title={The landscape of empirical risk for nonconvex losses},
  author={Mei, S. and Bai, Y. and Montanari, A.},
  journal={Annals of Statistics},
  volume={46},
  number={6A},
  pages={2747--2774},
  year={2018},
  publisher={IMS}
}

@article{beer2013lipschitz,
  title={The {L}ipschitz metric for real-valued continuous functions},
  author={Beer, G. and Hoffman, M. J.},
  journal={Journal of Mathematical Analysis and Applications},
  volume={406},
  number={1},
  pages={229--236},
  year={2013},
  publisher={Elsevier}
}

@book{simon2014guide,
  title={A Guide to {NIP} Theories},
  author={Simon, P.},
  year={2015},
  series={Lecture Notes in Logic},
  publisher={Cambridge University Press}
}

@article{ruan2024subgradient,
  title={On the Uniform Convergence of Subdifferentials in Stochastic Optimization and Learning},
  author={Ruan, F.},
  journal={Mathematics of Operations Research},
  volume = {to appear},
  year={2025}
}

@article{tian2025failure,
  title={Failure of uniform laws of large numbers for subdifferentials and beyond},
  author={Tian, L. and Royset, J. O.},
  journal={arXiv preprint arXiv:2511.16568},
  year={2025}
}

@book{ledoux1991probability,
  title={Probability in Banach Spaces: Isoperimetry and Processes},
  author={Ledoux, M. and Talagrand, M.},
  volume={23},
  year={1991},
  publisher={Springer}
}

@article{shapiro2007uniform,
  title={Uniform laws of large numbers for set-valued mappings and subdifferentials of random functions},
  author={Shapiro, A. and Xu, H.},
  journal={Journal of Mathematical Analysis and Applications},
  volume={325},
  number={2},
  pages={1390--1399},
  year={2007},
  publisher={Elsevier}
}

@article{norkin-wets,
  title={On strong graphical law of large numbers for random semicontinuous mappings},
  author={Norkin, V. I. and Wets, R. {J-B}},
  journal={Vestnik Sankt-Peterburgskogo Universiteta},
  volume={Seriya 10},
  number={3},
  pages={102--111},
  year={2013}
}

@book{clarke1990optimization,
  title={Optimization and Nonsmooth Analysis},
  author={Clarke, F. H.},
  year={1990},
  publisher={SIAM}
}

@article{teran2008uniform,
  title={On a uniform law of large numbers for random sets and subdifferentials of random functions},
  author={Ter{\'a}n, P.},
  journal={Statistics \& Probability Letters},
  volume={78},
  number={1},
  pages={42--49},
  year={2008},
  publisher={Elsevier}
}

@article{davis2022graphical,
  title={Graphical convergence of subgradients in nonconvex optimization and learning},
  author={Davis, D. and Drusvyatskiy, D.},
  journal={Mathematics of Operations Research},
  volume={47},
  number={1},
  pages={209--231},
  year={2022},
  publisher={INFORMS}
}

@book{shapiro2021lectures,
  title={Lectures on Stochastic Programming: Modeling and Theory},
  author={Shapiro, A. and Dentcheva, D. and Ruszczy\'{n}ski, A.},
  year={2021},
  edition = {3rd},
  publisher={SIAM}
}

@book{vanderVaartWellner.23,
	Author = {A.~W. {van der} Vaart and J.~A. Wellner},
	Publisher = {Springer},
	Series = {},
	Title = {Weak Convergence and Empirical Processes},
	Edition = {2nd},
	Year = {2023}
}

@book{VaAn,
	Author = {{R. T.} Rockafellar and R. {J-B} Wets},
	Edition = {3rd printing-2009},
	Publisher = {Springer},
	Title = {Variational Analysis},
	Year = {1998}}

@article{foster2018uniform,
  title={Uniform convergence of gradients for non-convex learning and optimization},
  author={Foster, D. J. and Sekhari, A. and Sridharan, K.},
  journal={Advances in Neural Information Processing Systems},
  volume={31},
  pages={8759--8770},
  year={2018}
}

@article{artstein1995consistency,
  title={Consistency of minimizers and the {SLLN} for stochastic programs},
  author={Artstein, Z. and Wets, R. {J-B}},
  journal={Journal of Convex Analysis},
  volume={2},
  number={1-2},
  pages={1--17},
  year={1995}
}

@article{ioffe2009invitation,
  title={An invitation to tame optimization},
  author={Ioffe, A. D.},
  journal={SIAM Journal on Optimization},
  volume={19},
  number={4},
  pages={1894--1917},
  year={2009},
  publisher={SIAM}
}

@article{xu2010uniform,
  title={Uniform exponential convergence of sample average random functions under general sampling with applications in stochastic programming},
  author={Xu, H.},
  journal={Journal of Mathematical Analysis and Applications},
  volume={368},
  number={2},
  pages={692--710},
  year={2010},
  publisher={Elsevier}
}

@article{burke2020subdifferential,
  title={The subdifferential of measurable composite max integrands and smoothing approximation},
  author={Burke, J. V. and Chen, X. and Sun, H.},
  journal={Mathematical Programming},
  volume={181},
  number={2},
  pages={229--264},
  year={2020},
  publisher={Springer}
}

@article{davis2020stochastic,
  title={Stochastic subgradient method converges on tame functions},
  author={Davis, D. and Drusvyatskiy, D. and Kakade, S. and Lee, J. D.},
  journal={Foundations of Computational Mathematics},
  volume={20},
  number={1},
  pages={119--154},
  year={2020},
  publisher={Springer}
}

@article{bolte2021conservative,
  title={Conservative set valued fields, automatic differentiation, stochastic gradient methods and deep learning},
  author={Bolte, J. and Pauwels, E.},
  journal={Mathematical Programming},
  volume={188},
  number={1},
  pages={19--51},
  year={2021},
  publisher={Springer}
}

@book{shalev2014understanding,
  title={Understanding Machine Learning: From Theory to Algorithms},
  author={Shalev-Shwartz, S. and Ben-David, S.},
  year={2014},
  publisher={Cambridge University Press}
}

@book{cohn2013measure,
  title={Measure Theory},
  author={Cohn, D. L.},
  series={Birkh\"auser Advanced Texts Basler Lehrb\"ucher},
  edition = {2nd},
  year={2013},
  publisher={Springer}
}

@book{folland1999real,
  title={Real Analysis: Modern Techniques and Their Applications},
  author={Folland, G. B.},
  edition = {2nd},
  year={1999},
  publisher={John Wiley \& Sons}
}

@book{chang1990model,
  title={Model Theory},
  author={Chang, C. C. and Keisler, H. J.},
  volume={73},
  edition = {3rd},
  year={1990},
  publisher={Elsevier}
}

@article{bolte2023subgradient,
  title={Subgradient sampling for nonsmooth nonconvex minimization},
  author={Bolte, J. and Le, T. and Pauwels, E.},
  journal={SIAM Journal on Optimization},
  volume={33},
  number={4},
  pages={2542--2569},
  year={2023},
  publisher={SIAM}
}

@article{kleywegt2002sample,
  title={The sample average approximation method for stochastic discrete optimization},
  author={Kleywegt, A. J. and Shapiro, A. and {Homem-de-Mello}, T.},
  journal={SIAM Journal on Optimization},
  volume={12},
  number={2},
  pages={479--502},
  year={2002},
  publisher={SIAM}
}

@article{shapiro2000rate,
  title={On the rate of convergence of optimal solutions of {M}onte {C}arlo approximations of stochastic programs},
  author={Shapiro, A. and {Homem-de-Mello}, T.},
  journal={SIAM Journal on Optimization},
  volume={11},
  number={1},
  pages={70--86},
  year={2000},
  publisher={SIAM}
}

@article{iusem2010distances,
  title={Distances between closed convex cones: Old and new results},
  author={Iusem, A. and Seeger, A.},
  journal={Journal of  Convex Analysis},
  volume={17},
  number={3-4},
  pages={1033--1055},
  year={2010}
}

@article{kaiser2012first,
  title={First order tameness of measures},
  author={Kaiser, T.},
  journal={Annals of Pure and Applied Logic},
  volume={163},
  number={12},
  pages={1903--1927},
  year={2012},
  publisher={Elsevier}
}

@article{krapp2025tameness,
  title={On Tameness, Measurability and the Independence Property},
  author={Krapp, L. S. and Vermeil, M. and Wirth, L.},
  journal={arXiv preprint arXiv:2506.08733},
  year={2025}
}

@article{bareilles2025deep,
  title={Deep Learning as the Disciplined Construction of Tame Objects},
  author={Bareilles, G. and Gehret, A. and Aspman, J. and Lep{\v{s}}ov{\'a}, J. and Mare{\v{c}}ek, J.},
  journal={arXiv preprint arXiv:2509.18025},
  year={2025}
}

@book{aliprantis2006infinite,
  title={Infinite Dimensional Analysis: A Hitchhiker's Guide},
  author={Aliprantis, C. D. and Border, K. C.},
  year={2006},
    edition = {3rd},
  publisher={Springer}
}

@incollection{van2000preservation,
  title={Preservation theorems for {G}livenko-{C}antelli and uniform {G}livenko-{C}antelli classes},
  author={van der Vaart, A. W. and Wellner, J. A.},
  booktitle={High Dimensional Probability II},
  pages={115--133},
  year={2000},
  publisher={Springer}
}

@article{berend2012missing,
  title={The missing mass problem},
  author={Berend, D. and Kontorovich, A.},
  journal={Statistics \& Probability Letters},
  volume={82},
  number={6},
  pages={1102--1110},
  year={2012},
  publisher={Elsevier}
}

@InProceedings{devale2025uniform,
  title={Uniform Convergence Beyond {G}livenko-{C}antelli},
  author={Devale, T. and Devulapalli, P. and Hanneke, S.},
  booktitle = 	 {International Conference on Algorithmic Learning Theory},
  pages = 	 {1--21},
  year = 	 {2026},
  editor = 	 {Telgarsky, Matus and Ullman, Jonathan},
  volume = 	 {313},
  series = 	 {Proceedings of Machine Learning Research},
  month = 	 {23--26 Feb},
  publisher =    {PMLR},
}

@article{schechtman2026gradient,
  title={The gradient's limit of a definable family of functions admits a variational stratification},
  author={Schechtman, S.},
  journal={SIAM Journal on Optimization},
volume = {36},
number = {2},
pages = {1075-1099},
  year={2026}
}

@article{gray2002quantization,
  title={Quantization},
  author={Gray, R. M. and Neuhoff, D. L.},
  journal={IEEE Transactions on Information Theory},
  volume={44},
  number={6},
  pages={2325--2383},
  year={2002},
  publisher={IEEE}
}

@incollection{steinhorn1985chapter,
  title={Chapter {XVI}: {B}orel Structures and Measure and Category Logics},
  author={Steinhorn, C. I.},
  booktitle={Model-Theoretic Logics},
  volume={8},
  pages={579--597},
  year={1985},
  publisher={Association for Symbolic Logic}
}

@article{bolte2007clarke,
  title={Clarke subgradients of stratifiable functions},
  author={Bolte, J. and Daniilidis, A. and Lewis, A. and Shiota, M.},
  journal={SIAM Journal on Optimization},
  volume={18},
  number={2},
  pages={556--572},
  year={2007},
  publisher={SIAM}
}

@article{nguyen2026rates,
  title={On rates of convergence for sample average approximations without smoothness},
  author={Nguyen, H. D. and Westerhout, J. and Guo, X.},
  journal={arXiv preprint arXiv:2604.25153},
  year={2026}
}

@article{chase2019model,
  title={Model Theory and Machine Learning},
  author={Chase, H. and Freitag, J.},
  journal={Bulletin of Symbolic Logic},
  volume={25},
  number={3},
  pages={319--332},
  year={2019},
  publisher={Cambridge University Press}
}

@article{karpinski1997polynomial,
  title={Polynomial bounds for {VC} dimension of sigmoidal and general {P}faffian neural networks},
  author={Karpinski, M. and Macintyre, A.},
  journal={Journal of Computer and System Sciences},
  volume={54},
  number={1},
  pages={169--176},
  year={1997},
  publisher={Elsevier}
}

@inproceedings{livni2013honest,
  title={Honest compressions and their application to compression schemes},
  author={Livni, R. and Simon, P.},
  booktitle={Conference on Learning Theory},
  pages={77--92},
  year={2013},
  organization={PMLR}
}

@inproceedings{macintyre1993finiteness,
  title={Finiteness results for sigmoidal ``neural'' networks},
  author={Macintyre, A. and Sontag, E. D.},
  booktitle={ACM Symposium on Theory of Computing},
  pages={325--334},
  year={1993}
}

@article{van1996geometric,
  title={Geometric categories and o-minimal structures},
  author={van den Dries, L. and Miller, C.},
  journal={Duke Mathematical Journal},
  volume={84},
  number={2},
  year={1996},
  publisher={Duke University Press}
}

@article{van1996new,
  title={New {D}onsker classes},
  author={van der Vaart, A. W.},
  journal={Annals of Probability},
  volume={24},
  number={4},
  pages={2128--2140},
  year={1996},
  publisher={Institute of Mathematical Statistics}
}

@article{wilkie1996model,
  title={Model completeness results for expansions of the ordered field of real numbers by restricted {P}faffian functions and the exponential function},
  author={Wilkie, A. J.},
  journal={Journal of the American Mathematical Society},
  volume={9},
  number={4},
  pages={1051--1094},
  year={1996}
}

@article{hieronymi2010defining,
  title={Defining the set of integers in expansions of the real field by a closed discrete set},
  author={Hieronymi, P.},
  journal={Proceedings of the American Mathematical Society},
  volume={138},
  number={6},
  pages={2163--2168},
  year={2010}
}

@article{shapiro2002conditioning,
  title={Conditioning of convex piecewise linear stochastic programs},
  author={Shapiro, A. and Homem-de-Mello, T. and Kim, J.},
  journal={Mathematical Programming},
  volume={94},
  number={1},
  pages={1--19},
  year={2002},
  publisher={Springer}
}

@article{burke1993weak,
  title={Weak sharp minima in mathematical programming},
  author={Burke, J. V. and Ferris, M. C.},
  journal={SIAM Journal on Control and Optimization},
  volume={31},
  number={5},
  pages={1340--1359},
  year={1993},
  publisher={SIAM}
}

@incollection{rockafellar1982favorable,
  author    = {Rockafellar, R. T.},
  title     = {Favorable Classes of {L}ipschitz-Continuous Functions in Subgradient Optimization},
  booktitle = {Progress in Nondifferentiable Optimization},
  editor    = {Nurminski, E.},
  series    = {IIASA Collaborative Proceedings Series},
  publisher = {International Institute for Applied Systems Analysis},
  address   = {Laxenburg, Austria},
  year      = {1982},
  pages     = {125--144}
}

@article{areces2026finding,
  title={Finding a stationary point of a stochastic convex problem},
  author={Areces, F. and Duchi, J. and Sommers, M.},
  journal={arXiv preprint arXiv:2607.06883},
  year={2026}
}

@article{gunaydin2011dependent,
  title={Dependent pairs},
  author={G{\"u}naydin, A. and Hieronymi, P.},
  journal={The Journal of Symbolic Logic},
  volume={76},
  number={2},
  pages={377--390},
  year={2011},
  publisher={Cambridge University Press}
}

@book{coste2002introduction,
  title={An Introduction to Semialgebraic Geometry},
  author={Coste, M.},
    series={RAAG Notes},
    publisher={Institut de Recherche Math\'ematiques de Rennes},
  year={2002}
}

@article{aumann1965integrals,
  title={Integrals of set-valued functions},
  author={Aumann, R. J.},
  journal={Journal of Mathematical Analysis and Applications},
  volume={12},
  number={1},
  pages={1--12},
  year={1965},
  publisher={Academic Press}
}

@book{beer2023bornologies,
  title={Bornologies and Lipschitz Analysis},
  author={Beer, G.},
  year={2023},
  publisher={CRC Press}
}

@book{coste1999introduction,
  title={An Introduction to O-minimal Geometry},
  author={Coste, M.},
     series={RAAG Notes},
    publisher={Institut de Recherche Math\'ematiques de Rennes},
  year={1999}
}
\bibliographystyle{plainnat}

\end{document}